\documentclass[11pt,reqno]{amsart}
 \usepackage{graphicx}
\usepackage{amscd,amsmath,amsopn,amssymb,amsthm,multicol}
\usepackage[color,matrix, all, 2cell]{xy}
\usepackage{amscd}
\usepackage{lscape}
\usepackage{slashed}
\usepackage{graphicx}
\usepackage{setspace}
\usepackage{upgreek}
\usepackage{textgreek}
\usepackage{enumerate}
\usepackage{color}
\usepackage{lscape}
\usepackage{tikz}
\usepackage{multirow}
\usepackage{cancel}
\usepackage{soul}
\usepackage{harmony}
\usepackage{comment}
\usepackage{wasysym}
\usepackage{mathrsfs}
\usepackage{mathtools}

\numberwithin{equation}{section}
\DeclareMathAlphabet{\mathscrbf}{OMS}{mdugm}{b}{n}

\DeclareMathOperator{\Ad}{Ad}

\DeclareMathOperator{\Ric}{\mathsf{Ric}}
\DeclareMathOperator{\ric}{\mathsf{ric}}

\DeclareMathOperator{\vol}{vol}
\DeclareMathOperator{\con}{\mathsf{c}}

\DeclareMathOperator{\dd}{d}

\newcommand{\fr}{\mathfrak}
\newcommand{\al}{\alpha}
\newcommand{\be}{\beta}
\newcommand{\eps}{\epsilon}
\newcommand{\wi}{\widetilde}
\newcommand{\bb}{\mathbb}
\newcommand{\mc}{\mathcal}

\DeclareMathOperator{\Ss}{\bb{S}}

\theoremstyle{plain}
\newtheorem{lemma}{Lemma} [section]
\newtheorem{theorem}[lemma]{Theorem}
\newtheorem{corol}[lemma] {Corollary}
\newtheorem{prop} [lemma]{Proposition}

\theoremstyle{definition}
\newtheorem{definition}[lemma] {Definition}
\newtheorem{example}[lemma] {Example}

\newtheorem{remark}[lemma] {Remark}
\newtheorem*{remark*}{Remark}

\definecolor{dark}{rgb}{0.18,0.18,0.68}
\definecolor{mydark}{rgb}{0.78,0.08,0.08}
\definecolor{crew}{rgb}{0.2,0.5,0.2}
\definecolor{mmg}{rgb}{0.31,0.50,0.23}
\definecolor{dblue}{rgb}{0.01,0.01,0.44}
\definecolor{red}{rgb}{0.57,0.11,0.15}
\definecolor{cobalt}{RGB}{61,99,181}
\usepackage[colorlinks,citecolor=cobalt,linkcolor=cobalt,urlcolor=cobalt,pdfpagemode=UseNone,backref = page]{hyperref}

\language=0
  \textwidth 165mm
  \textheight 225mm
 \setlength{\topmargin}{-0.5cm}
 \setlength{\oddsidemargin}{0cm}
 \setlength{\evensidemargin}{0cm}


\begin{document}

\title[Decomposable $(6, 5)$-solutions in eleven-dimensional supergravity]{Decomposable $(6, 5)$-solutions in eleven-dimensional supergravity}

  \author{Ioannis Chrysikos}
  \address{Faculty of Science, University of Hradec Kr\'alov\'e, Rokitanskeho 62, Hradec Kr\'alov\'e
50003, Czech Republic}
\email{ioannis.chrysikos(at)uhk.cz}

 \author{Anton Galaev}
\address{Faculty of Science, University of Hradec Kr\'alov\'e, Rokitanskeho 62, Hradec Kr\'alov\'e
50003, Czech Republic}
\email{anton.galaev(at)uhk.cz}
 
 

    \maketitle  

\begin{abstract}
This paper presents  a series of   constructions providing    eleven-dimensional  bosonic supergravity backgrounds. In particular, we treat Lorentzian manifolds   given in terms of twisted products of    six-dimensional     Lorentzian manifolds and  five-dimensional    Riemannian manifolds.  By considering a representative  flux 4-form  adapted to this setting, we  analyse the system of bosonic supergravity equations and   describe  the corresponding geometric constraints.  The new supergravity backgrounds appear  for special cases associated to the adapted flux 4-form. For example, we provide a relation of  eleven-dimensional supergravity   with Ricci-isotropic Walker manifolds, and illustrate  several results   in their terms  and in terms of Ricci-flat Riemannian manifolds. 
\end{abstract}


\section*{Introduction}\label{intro}

The current  leading framework for a quantum theory of gravity is sypersymmetric string theory,   in particular the five  well-established  and self-consistent   ten-dimensional superstring theories. 
  Due to the discovery of  string dualities, these  five distinct theories are now recognized to be different manifestations of a single underlying superstring theory  in eleven dimensions,  see  for example \cite{Green}.  This is the so-called {\it M-theory}, whose core  idea is that under  certain assumptions about the geometry of the underlying spacetime,   all
the existing superstring theories are  ``equivalent'' (in terms of T-duality or other kinds of dualities), see   \cite{Duff2, Witten, Green2, Tanii}  for more details and references.

 Nowadays, both ten-dimensional supergravity theories and their unique  eleven-dimensional analogue,   have been established as   the low-energy effective theories of  string theories and in particular of M-theory.
 As low-energy effective field theories, most supersymmetric supergravity theories supply us with a suitable framework for studying and understanding  string theory. For example,  all supergravity actions  consist both of bosonic and fermionic fields. The fermionic data is related with  matter degrees of freedom,  while the supersymmetries transformations relate the bosonic and fermionic fields each other.  A primary goal is always to understand the solution spaces of consistent backgrounds.      On the other hand, the supersymmetries for a given bosonic supergravity background   are determined  via a generalized Killing spinor equation.  The spinorial solutions    of this equation, the corresponding Killing superalgebras and the holonomy of the associated supercovariant derivative, are concepts playing a dynamic  role  towards to a  classification of supersymmetric   backgrounds. Hence, nowadays there is  a wide class of well-established solutions, as branes, monopoles, special kinds of Lorentzian manifolds or metrics of special holonomy (see for example \cite{Papa, Fppwav, FOF, FOFP}). However,  in general the classification of   (bosonic) supergravity backgrounds   lacks of a systematic investigation, and  most known results are    based for instance on  trivial flux forms, or  on  flux forms related with  $G$-structures or other special constructions  (see  \cite{Papa, Fppwav,   Sfetsos, Mee, House, Fig2, Fig3} and  the references therein).   In particular, the full classification of all supergravity backgrounds is a hard open problem.

Here, we consider eleven-dimensional supergravity (11d-supergravity for short), where the set of bosonic fields   consists of a  Lorentzian metric $h$  and the flux form $\mathsf{F}$, i.e. a  4-form gauge field $\mathsf{F}\in\Omega^{4}(\mathsf{X})$, globally defined on an eleven-dimensional spacetime $\mathsf{X}^{1, 10}$,  and satisfying  the following field equations  (cf. \cite{Cre, FOF, ACT})
\begin{eqnarray*}
\dd \mathsf{F}&=&0\,, \\
\dd \star \mathsf{F} &=&\frac{1}{2}\mathsf{F} \wedge \mathsf{F}\,, \\
\Ric^{h}(X,Y)&=&-\frac{1}{2} \langle X\lrcorner \mathsf{F}, Y\lrcorner \mathsf{F}\rangle_{h} +\frac{1}{6}h(X,Y) \|\mathsf{F}\|^2_{h}\,.
\end{eqnarray*}
The second equation is the   {\it Maxwell equation} while the third one is the so-called  {\it supergravity Einstein equation}.
Triples $(\mathsf{X}^{1, 10}, h, \mathsf{F})$ solving this system of equations are called {\it  bosonic supergravity backgrounds}.  11d-supergravity was introduced in \cite{Cre} and we refer to \cite{FOF, FOFP, Fig2} for further details. 
In this work, motivated by the previous remark,  we provide a  methodological  description of the bosonic supergravity field equations on eleven-dimensional Lorentzian manifolds, which are warped products of   five-dimensional Riemannian manifolds and  six-dimensional Lorentzian manifolds, i.e.
\[
\mathsf{X} = M ^5 \times_{f} \wi M ^{1, 5}\,,\quad h = g +   f^2\tilde g\,,
\]
for some non-trivial smooth function $f  : M\to\bb{R}$. The  tangent space $V=T_{x}\mathsf{X}$  of $\mathsf{X}^{1, 10}$ at a point $x=(p, q)\in\mathsf{X}$ decomposes  into a direct sum  of the five-dimensional (Euclidean) tangent space  $E=T_{p}M$ of $M$ and  the six-dimensional (Minkowski) tangent space $L=T_{q}\wi{M}$  of $\wi{M}$,  
\[
V^{1, 10}=E^{5}\oplus L^{1, 5}\,.
\]
Adapted to this  splitting,  there is an  orthogonal decomposition of the space $\Omega ^4 (\mathsf{X} )$ of  4-forms  on $\mathsf{X}$, which in the linear setting is given by
\[
\Lambda^{4}V=\Lambda ^4 \bb{R}  ^{ 1,10 } =\Lambda ^4 L \bigoplus (\Lambda ^3 L \wedge \Lambda^{1}E) \bigoplus (\Lambda ^2 L \wedge \Lambda ^2 E) \bigoplus (\Lambda^{1} L \wedge \Lambda ^3   E)\bigoplus  \Lambda ^4 E \,.
\]
Based on this decomposition,  we consider  4-forms of the type  
\[
\mathsf{F}:=  \tilde \al +   \tilde \be \wedge \nu +  \tilde \gamma \wedge \delta  +  \tilde \varpi  \wedge \eps  + \theta \,,
\]
where   $ \tilde \al \in \Omega ^4 (\wi{M} )$, $ \tilde \be \in  \Omega ^3 (\wi{ M} )$, $ \tilde  \gamma  \in \Omega  ^2 ( \wi{ M} ) $, $ \tilde \varpi \in \Omega ^1  (\wi{ M}) $, $ \nu \in \Omega ^1 (M ) $, $ \delta \in \Omega ^2 (M) $, $ \eps \in \Omega ^3 (M) $, and $\theta \in \Omega ^4 (M) $. Such 4-forms are {\it not} the most general 4-forms on $\mathsf{X}^{1, 10}$ (even if  each  factor  is multiplied by a  smooth function).  However, we will show that $\mathsf{F}$  serves as a nice representative which  induces several  cases providing bosonic supergravity backgrounds. 

Indeed, based on $\mathsf{F}$ we  describe  the constraints which appear due to  the closedness   condition, the Maxwell equation and the supergravity Einstein equation. As we expected, in the general case  the  induced system   is very complicated and not so meaningful, see for example Proposition \ref{MAX} for the Maxwell equation and Propositions \ref{EinHH}, \ref{EinVV} for the  supergravity Einstein equation.   
With aim to overcome this difficulty  and obtain a more tractable version, it is natural to treat the several special cases which  are induced by the flux form  $\mathsf{F}$.   Then,  the constraints which occur by the Maxwell equation in combination with the closedness  condition are simplified (see Proposition \ref{spck}), and the same applies for the supergravity Einstein field equation  (see Corollaries \ref{spcasealphaa}-\ref{spcasesumright1}).  

This procedure  finally leads to   the description of several new decomposable, non-symmetric, bosonic supergravity backgrounds. In fact, initially, we present general solutions given by untwisted products   of five-dimensional Ricci-flat Riemannian manifolds and six-dimensional Lorentzian manifolds, which satisfy some extra geometric conditions, e.g. their Ricci tensor  verifies  a generalized Einstein equation given  in terms  of a stress-energy tensor  associated to a non-trivial null 3- or 4-form, or a  parallel null 1-form (see Theorems \ref{thmnew1}, \ref{newthem2}, \ref{null13form} and see also Theorem \ref{spckckck} for a more complicated case).  When the flux form is  related to  a parallel null 1-form   we show in addition  that the  bosonic supergravity background $(\mathsf{X}^{1, 10}, h)$ must be  totally Ricci isotropic.    
Recall that  a semi-Riemannian manifold  $(\mathsf{X}, h)$ is called  {\it totally Ricci isotropic},  if the image of the Ricci endomorphism $\ric^{h} : T\mathsf{X}\to T\mathsf{X}$ is made up of isotropic vector fields.  Such geometric structures are very important  in holonomy theory of Lorentzian manifolds (cf. \cite{Leis, Gal, LorEin}) and they yield relevant applications in supergravity theory too (see also \cite{Fppwav}). 

In order to illustrate our general statements  and provide a suitable theoretical framework  inducing explicit bosonic solutions of 11d-supergravity, a  key challenge is the use of  special classes of  (Lorentzian) {\it Walker manifolds}, in particular  {\it Ricci-isotropic Walker manifolds}.  Walker manifolds are pseudo-Riemannian manifolds admitting a parallel null distribution  and they  serve as a powerful tool of constructing interesting indefinite metrics which exhibit various geometric aspects. Hence they  play a distinguished role  both in geometry and physics, see \cite{Broz} for several examples and see also   \cite{Gib, Gal,  LorEin,  Gal1} for a contribution of such manifolds in  Lorerntzian holonomy theory and Einstein theory. 

Here we focus on {\it Ricci-isotropic Walker manifolds}, where the notation  refers  to the property that the image of the Ricci endomorphism  related to the Walker metric is totally null.   Particular examples of such  manifolds are the  {\it pp-waves}, which can be also viewed as  {\it Brinkman spaces}.   
 As manifolds for which the Einstein field equations are linear, pp-waves   induce   a wealth of strong applications in    general relativity.  
 On the other hand,  as manifolds admitting a large number of parallel spinors,  pp-waves  also contribute  in  holonomy theory   and in supersymmetric  string  and supergravity theories. For example, the type IIB string theory along pp-wave backgrounds,  reduces to free massive theory in the light-cone
gauge and becomes exact solvable, see   \cite{Metsa}.    11d-supergravity also admits  pp-wave solutions  \cite{Kow, Hul, G}, which  provide simple but non-trivial backgrounds for studying  the AdS/CFT correspondence and maximal supersymmetry (see \cite{Hul, FP, Blau, G, Ber,   Cv, Baum, Ke, Fig2,  Leis, Gal} for more details in all these directions).  

Our strategy   to   get benefit of   Ricci-isotropic Walker manifolds is based on   using   mixed flux forms, given  in terms of their local coordinates in six dimensions  and the local coordinates of some Ricci-flat five-dimensional Riemannian manifold. We should mention that   flux forms induced by pp-waves have been applied to  several supergravity theories, e.g. five-dimensional pp-wave solutions with unbroken supersymmetry,    chiral six-dimensional supergravity theories of   type (1,0) and (2,0) \cite{Fig2, CFS, Mee}, or  11d-supergravity  \cite{Fppwav, Blau, Cv, FMP}.  However,  our results are illustrated  in terms of  more general Ricci-isotropic Walker metrics than pp-wave metrics  (see Corollaries \ref{sol1}, \ref{sol2}, \ref{sol3},  Remark \ref{similar} and Proposition \ref{sol4}).  Moreover,  the stated  bosonic supergravity backgrounds   are {\it non-symmetric} (compare with \cite{Fig3}).  At this point we   conjecture that,  to the given constructions,  different  approaches could be applied  (adapted  for example  to generic Walker metrics, or to the  stated results for warped metrics).  
   
   Regarding the Riemannian part  of the given decomposable backgrounds,  the supergravity Einstein equation   establishes  a link with  manifolds of special holonomy.  In particular, at least for the direct product construction and by considering null forms for the Lorentzian part, we   show that  the supergravity Einstein equation yields the constraint of Ricci-flatness for $(M^{5}, g)$. Consequently, and as we pronounced above, the stated decomposable   solutions are also based   on five-dimensional   Ricci-flat manifolds, given for instance in terms of products of   a Ricci-flat K\"ahler manifold with the real line  (see for example Corollaries \ref{sol1}, \ref{sol2}, \ref{sol3} and Proposition \ref{sol4}). 
      
    The structure of the paper  is given as follows.  In the first section we set  up our notation, relevant to our subsequent computations. In the second section we  fix  warped products of the form $M^{5}\times_{f}\wi{M}^{1, 5}$  and  describe the system of constraints, induced by the closedness  condition and Maxwell equation simultaneously, always with respect to  the  adapted 4-form $\mathsf{F}$.   The supergravity Einstein equation is discussed in the third section, where we also analyse the   special cases induced by  $\mathsf{F}$.  The fourth section contains  the explicit solutions and it is characterized by a potpourri of constructions and examples, mainly related with   Ricci-isotropic Walker metrics and null forms.    In the final section    we include   a few constructions, adapted to the interesting case where  the flux form $\mathsf{F}$ depends only on the Riemannian factor, and we pose some open problems. 
    
    Since we   present new bosonic non-symmetric supergravity backgrounds, it worths to analyse  the  corresponding spinoral equation and determine the corresponding  supersymmetries, if any.  Results in this direction will be presented  in a forthcoming  work.

\section{Preliminaries}
\subsection{Conventions}
Let  $(\mathsf{X}, h)$ be  a $n$-dimensional pseudo-Riemannian manifold of signature $(r, s)$.
As a matter of terminology, in this paper a Riemannian manifold  is assumed to have  signature $(0,s )$, which means that $h$ is a negative definite  metric.
Then, a unit vector $X$ on a Riemannian manifold  satisfies $h(X, X)=-1$. Also, we consider  Lorentzian manifolds as pesudo-Riemannian manifolds with  signature $(1, n-1 )$ (``mostly minus'').  A vector $Y$ is then time-like if $h(Y, Y)>0$, null if $h(Y, Y)=0$ and spacelike if $h(Y, Y)<0$.

We shall  denote by $\langle \ , \ \rangle_{h}$ the inner product induced by  $h$    on the space of differential forms. For $\alpha , \beta \in \Omega ^p (\mathsf{X}) $, it is given by
\[
\langle \al , \be\rangle_{h} 
= \frac{1}{p!}\sum_{\substack{\mu_1, \ldots,  \mu_p \\ \nu_1, \ldots, \nu_p}} \al _{ \mu _1 \ldots \mu _p }\be _{ \nu _1 \ldots \nu _p } h^{ \mu _1 \nu _1 } \cdots h ^{ \mu _p \nu _p }\,,
\]
and then we may  set $h (\alpha , \beta ) =p! \langle \alpha , \beta \rangle _h$.
The Hodge-star  $\star : \Omega^{p}(\mathsf{X})\to \Omega^{n-p}(\mathsf{X})$ is defined  by $\al \wedge \star\be = \langle \al , \be\rangle_{h}\mathrm{vol} _{\mathsf{X}}$, 
where $\vol_{\mathsf{X}}$   denotes the volume form of $(\mathsf{X}, h)$. 
\subsection{Warped products}
Let  $(\wi M, \tilde g ) $ and $(M,g) $ be  pseudo-Riemannian manifolds of dimensions $p $, $q$, and let  $\tilde s$, $ s $ be the number of negative eigenvalues of $\tilde g $, $g$, respectively.  Given a non-vanishing smooth function $f\in C^{\infty}(M)$, the  warped product   $ \mathsf{X}= M \times _f \wi M $ is the product manifold $M\times \wi{M}$ equipped with 
the pseudo-Riemannian  metric     $h = g + f ^2 \tilde g$. Usually,  $M$ is called the base, $\wi M$ the fiber and $f$ the warping function.  The volume form  of $(\mathsf{X}, h)$  satisfies $\mathrm{vol} _{ \mathsf{X} } =  f ^{p} \, \mathrm{vol} _{ \wi{M}} \wedge \mathrm{vol} _M$.  

Next we shall denote by   $ \star $, $\tilde * $, $ * $  the Hodge star  operator on $(\mathsf{X},h) $, $ (\wi M , \tilde g )$, and $(M,  g )$, respectively.  Some useful properties of  $\langle \ , \ \rangle_{h}$ and  $\star$    are given as follows  (a proof is direct, see also \cite[Lem.~2.1]{ACT} for $f=1$).\begin{lemma} \label{stvarious} 
Let $\tilde \al  \in \Omega ^{\tilde k} (\wi M) $, $ \be \in \Omega ^k (M) $. Then  
\[
\langle \tilde \al  \wedge \be , \tilde \al \wedge \be  \rangle_{h}= f ^{ -2 \tilde k } \langle \tilde \al , \tilde \al\rangle_{\tilde g}\langle \be , \be \rangle_{g}\,,\quad  \star (\tilde \al \wedge \be ) = (-1 )^{ k (p - \tilde k )}f ^{p-2 \tilde k } \  \tilde * \tilde \al \wedge * \be\,.
\]
\end{lemma}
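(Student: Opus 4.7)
The plan is to work in a local orthonormal coframe adapted to the warped-product splitting. If $\{\tilde e^{i}\}_{i=1}^{p}$ is a $\tilde g$-orthonormal coframe on $\wi M$ and $\{e^{j}\}_{j=1}^{q}$ is a $g$-orthonormal coframe on $M$, then, because $h$ restricts to $f^{2}\tilde g$ on $T\wi M$ and to $g$ on $TM$ with the two summands $h$-orthogonal, the family $\{f\tilde e^{i},\,e^{j}\}$ is an $h$-orthonormal coframe on $\mathsf{X}$. Expressing $\tilde\alpha$ in this coframe rescales its components by $f^{-\tilde k}$, while the components of $\beta$ are unchanged; this single observation is the computational engine for both identities.

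For the first identity I would first record that, with respect to the orthogonal splitting $\Lambda^{\tilde k+k}V=\bigoplus_{a+b=\tilde k+k}\Lambda^{a}L\otimes\Lambda^{b}E$, the inner product $\langle\cdot,\cdot\rangle_{h}$ is block diagonal, and on the summand $\Lambda^{\tilde k}L\otimes\Lambda^{k}E$ it factorizes as a product of the fiber- and base-inner products. Applied to $\tilde\alpha\wedge\beta$ this gives $\langle\tilde\alpha\wedge\beta,\tilde\alpha\wedge\beta\rangle_{h}=\langle\tilde\alpha,\tilde\alpha\rangle_{h}\langle\beta,\beta\rangle_{h}$; the component-rescaling then converts $\langle\tilde\alpha,\tilde\alpha\rangle_{h}$ into $f^{-2\tilde k}\langle\tilde\alpha,\tilde\alpha\rangle_{\tilde g}$ and leaves $\langle\beta,\beta\rangle_{h}=\langle\beta,\beta\rangle_{g}$, finishing the claim (and, by polarization, the analogous identity for a mixed pair $\tilde\alpha\wedge\beta$, $\tilde\alpha'\wedge\beta'$ needed below).

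For the Hodge-star identity I would verify the defining relation
\[
\eta\wedge\bigl[(-1)^{k(p-\tilde k)}f^{p-2\tilde k}\,\tilde * \tilde\alpha\wedge *\beta\bigr]=\langle\eta,\tilde\alpha\wedge\beta\rangle_{h}\,\mathrm{vol}_{\mathsf{X}}
\]
on a basis of $(\tilde k+k)$-forms. A degree count shows that test forms $\eta$ of split type $(a,b)\neq(\tilde k,k)$ contribute zero on both sides: the left-hand side because $\eta\wedge\tilde * \tilde\alpha\wedge *\beta$ overflows the top degree in one factor, and the right-hand side by orthogonality of the splitting. Hence it suffices to take $\eta=\tilde\alpha'\wedge\beta'$ with $\tilde\alpha'\in\Omega^{\tilde k}(\wi M)$, $\beta'\in\Omega^{k}(M)$. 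Permuting $\beta'$ (degree $k$) past $\tilde * \tilde\alpha$ (degree $p-\tilde k$) contributes $(-1)^{k(p-\tilde k)}$, cancelling the prefactor; applying $\tilde\alpha'\wedge\tilde * \tilde\alpha=\langle\tilde\alpha',\tilde\alpha\rangle_{\tilde g}\mathrm{vol}_{\wi M}$ and $\beta'\wedge *\beta=\langle\beta',\beta\rangle_{g}\mathrm{vol}_{M}$, together with $\mathrm{vol}_{\wi M}\wedge\mathrm{vol}_{M}=f^{-p}\mathrm{vol}_{\mathsf{X}}$, reduces the left-hand side to $f^{-2\tilde k}\langle\tilde\alpha',\tilde\alpha\rangle_{\tilde g}\langle\beta',\beta\rangle_{g}\mathrm{vol}_{\mathsf{X}}$, which by the first identity equals $\langle\tilde\alpha'\wedge\beta',\tilde\alpha\wedge\beta\rangle_{h}\mathrm{vol}_{\mathsf{X}}$.

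The argument is entirely local and combinatorial, so the only genuine pitfall is sign bookkeeping: one has to fix a convention for $\mathrm{vol}_{\mathsf{X}}=f^{p}\,\mathrm{vol}_{\wi M}\wedge\mathrm{vol}_{M}$ and propagate it consistently when moving factors past one another. Beyond that, the sole substantive input is tracking the $f^{-\tilde k}$ rescaling of the $\tilde g$-components once they are re-expressed in an $h$-orthonormal coframe, from which both the $f^{-2\tilde k}$ in the first identity and the $f^{p-2\tilde k}$ in the second drop out automatically.
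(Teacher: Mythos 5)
Your proof is correct: the paper itself gives no argument (it only remarks that ``a proof is direct'' and cites \cite[Lem.~2.1]{ACT} for the case $f=1$), and your verification via the $h$-orthonormal coframe $\{f\tilde e^{i},e^{j}\}$ together with the defining relation of $\star$ tested against split-type forms is exactly the standard direct computation being alluded to, with the sign $(-1)^{k(p-\tilde k)}$ and the powers $f^{-2\tilde k}$, $f^{p-2\tilde k}$ all coming out consistently with the paper's convention $\mathrm{vol}_{\mathsf{X}}=f^{p}\,\mathrm{vol}_{\wi M}\wedge\mathrm{vol}_{M}$. No gaps.
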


\section{The Maxwell equation on warped products}
\subsection{Warped products  of the form $\mathsf{X} = M ^5 \times _f \wi M ^{1, 5}$}
In the remainder of this section we will study warped products of the form  $(\mathsf{X}^{1, 10} = M ^5 \times _f \wi M ^{1, 5}, h = g + f ^2 \tilde g)$, 
where $(\wi M^{1, 5}, \tilde g )$ is an six-dimensional oriented  Lorentzian manifold,  $(M^{5}, g)$ is a five-dimensional oriented Riemannian manifold and $f \in C ^\infty (M^5)$.   $\mathsf{X}^{1, 10}$ is oriented, with volume form given by $\vol_{\mathsf{X}}=f^{6}\vol_{M}\wedge\vol_{\wi{M}}$. We are    interested in closed 4-forms $\mathsf{F}\in \Omega ^4_{\rm cl} (\mathsf{X}^{1, 10}) $ on $\mathsf{X}^{1, 10}$.  The tangent space $V:=T_{x}\mathsf{X}$  of $\mathsf{X}^{1, 10}$ at a point $x=(p,q)\in\mathsf{X}$ decomposes as  $V\simeq \bb{R}  ^{1,10 }  = E ^5 \oplus L ^{1,5}$, and then motivated by the decomposition of  $\Lambda^{4}V$ described in introduction,
we consider   4-forms of the type
\begin{equation}\label{4formdeco} 
\mathsf{F} =  \tilde \al +   \tilde \be \wedge \nu +  \tilde \gamma \wedge \delta  +  \tilde \varpi  \wedge \eps  + \theta\,,
\end{equation} 
where $\tilde \al \in \Omega ^4 (\wi{M} )$, $\tilde \be \in  \Omega ^3 (\wi{ M} )$,  $\tilde  \gamma  \in \Omega  ^2 ( \wi{ M} )$, $\tilde \varpi \in \Omega ^1  (\wi{ M})$, and  $\nu \in \Omega ^1 (M )$,  $\delta \in \Omega ^2 (M)$, $\eps \in \Omega ^3 (M)$, $\theta \in \Omega ^4 (M)$, respectively.
In these terms  it is easy to see that
\begin{lemma} \label{CLOS}
The closedness of the  4-form  $\mathsf{F}$ defined by (\ref{4formdeco})  is equivalent to the following system of equations 
\[
\mathrm{d} \tilde \al=\mathrm{d} \theta=\mathrm{d} \tilde \be=\mathrm{d} \eps =0\,,\quad 
\mathrm{d} \tilde  \gamma \wedge \delta -  \tilde \be \wedge \mathrm{d} \nu =0\,, \quad 
 \tilde \gamma \wedge \mathrm{d} \delta + \mathrm{d}  \tilde \varpi \wedge \eps =0\,.
\]
\end{lemma}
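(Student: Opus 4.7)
The plan is to expand $d\mathsf{F}$ via the Leibniz rule and then collect terms according to the bidegree induced by the product decomposition $T^*\mathsf{X}\simeq T^*M \oplus T^*\wi M$. Since the constituent forms $\tilde\al,\tilde\be,\tilde\gamma,\tilde\varpi$ are pullbacks of forms on $\wi M$ and $\nu,\delta,\eps,\theta$ are pullbacks of forms on $M$, the exterior derivative splits as $d=d_M+d_{\wi M}$, with $d_M$ annihilating the tilded forms and $d_{\wi M}$ annihilating the untilded ones. Consequently each summand of $\mathsf{F}$ contributes pieces in at most two distinct bidegrees $(k, 5-k)$ with respect to the splitting $\Omega^{5}(\mathsf{X})=\bigoplus_{k=0}^{5}\pi_{\wi M}^{*}\Omega^{k}(\wi M)\wedge\pi_{M}^{*}\Omega^{5-k}(M)$, and these summands live in complementary subspaces so they must vanish independently.

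Applying Leibniz to each term, I would list the contributions explicitly:
\begin{align*}
d\tilde\al &\in \Omega^{5}(\wi M)\wedge\Omega^{0}(M),\\
d(\tilde\be\wedge\nu) &= d\tilde\be\wedge\nu\;-\;\tilde\be\wedge d\nu \in \Omega^{4,1}\oplus\Omega^{3,2},\\
d(\tilde\gamma\wedge\delta) &= d\tilde\gamma\wedge\delta\;+\;\tilde\gamma\wedge d\delta \in \Omega^{3,2}\oplus\Omega^{2,3},\\
d(\tilde\varpi\wedge\eps) &= d\tilde\varpi\wedge\eps\;-\;\tilde\varpi\wedge d\eps \in \Omega^{2,3}\oplus\Omega^{1,4},\\
d\theta &\in \Omega^{0}(\wi M)\wedge\Omega^{5}(M),
\end{align*}
the signs being determined by the degrees $3,2,1$ of $\tilde\be,\tilde\gamma,\tilde\varpi$ respectively. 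Summing and setting the component in each bidegree $(k,5-k)$ equal to zero produces the five equations
$d\tilde\al=0$, $d\tilde\be\wedge\nu=0$, $d\tilde\gamma\wedge\delta-\tilde\be\wedge d\nu=0$, $\tilde\gamma\wedge d\delta+d\tilde\varpi\wedge\eps=0$, $\tilde\varpi\wedge d\eps=0$, $d\theta=0$.

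The remaining point is to rewrite the conditions $d\tilde\be\wedge\nu=0$ and $\tilde\varpi\wedge d\eps=0$ as $d\tilde\be=0$ and $d\eps=0$. The argument is pointwise: at $(p,q)\in \mathsf{X}$ the wedge $d\tilde\be|_{q}\wedge\nu|_{p}$ lies in $\Lambda^{4}T_{q}^{*}\wi M\otimes\Lambda^{1}T_{p}^{*}M$, and such an elementary tensor vanishes iff one of the two factors does; since $\nu$ is a fixed representative of the $(0,1)$-part of $\mathsf{F}$ (and symmetrically $\tilde\varpi$ is fixed), the condition forces $d\tilde\be=0$ and $d\eps=0$ on the respective factors, giving exactly the system stated in the lemma. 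The converse direction is immediate: assuming the six displayed equations, one substitutes back into the expansion above and observes that $d\mathsf{F}=0$.

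The calculation is essentially routine; the only subtlety worth stating carefully is the matching of signs $(-1)^{\deg}$ in Leibniz and the bidegree bookkeeping that isolates the two ``mixed'' equations from the two ``pure'' ones. I do not anticipate any genuine obstacle beyond these organizational details.
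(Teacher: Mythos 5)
Your proof is correct and follows exactly the route the paper intends: the paper states Lemma \ref{CLOS} without proof as ``easy to see,'' and the implicit argument is precisely your bidegree decomposition of $\mathrm{d}\mathsf{F}$ under $\mathrm{d}=\mathrm{d}_M+\mathrm{d}_{\wi M}$, with the pointwise tensor-product observation reducing $\mathrm{d}\tilde\be\wedge\nu=0$ and $\tilde\varpi\wedge\mathrm{d}\eps=0$ to $\mathrm{d}\tilde\be=0$ and $\mathrm{d}\eps=0$ (under the standing assumption that $\nu$ and $\tilde\varpi$ are non-trivial). The signs and the two mixed equations all check out.
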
 
\noindent About the Maxwell equation, we apply  Lemma \ref{stvarious}   to obtain that 
 \begin{lemma} \label{dFF}
The 4-form  $\mathsf{F}$ defined by (\ref{4formdeco}) satisfies
\begin{eqnarray*}
\frac{1}{2} \mathsf{F} \wedge \mathsf{F} &=&  
 \tilde \al \wedge  \tilde \gamma \wedge \delta +   \tilde \al \wedge  \tilde \varpi \wedge \eps +  \tilde \be \wedge  \tilde \gamma \wedge \delta \wedge \nu  +  \tilde \al \wedge \theta +  \tilde \be \wedge  \tilde \varpi \wedge \eps \wedge \nu  \\
 && + \frac{1}{2}  \tilde \gamma \wedge  \tilde \gamma \wedge \delta \wedge \delta    +  \tilde \be \wedge \theta \wedge \nu +  \tilde \gamma \wedge  \tilde \varpi \wedge \eps \wedge \delta\,.
\end{eqnarray*}
Moreover, $\star \mathsf{F} =  f ^{ -2 } \tilde * \tilde\al \wedge \mathrm{vol} _M - \tilde * \tilde \be \wedge * \nu + f ^2 \tilde * \tilde \gamma \wedge * \delta - f ^4 \tilde * \tilde \varpi \wedge * \eps + f ^6 * \theta \wedge \mathrm{vol} _{ \wi M }$ 
and hence
\begin{eqnarray*}
\mathrm{d} \star \mathsf{F} &=&
\frac{1}{f ^2 } \mathrm{d} \tilde * \tilde \al \wedge \mathrm{vol} _M + \tilde * \tilde \be \wedge \mathrm{d} * \nu + 2 \tilde * \tilde \gamma \wedge f \mathrm{d} f \wedge * \delta  +  \tilde * \tilde \gamma \wedge f ^2 \mathrm{d} * \delta  - \mathrm{d} \tilde * \tilde \be \wedge * \nu \\ 
&& + \mathrm{d} \tilde * \tilde \gamma \wedge f ^2 * \delta  + f ^4 \tilde *  \tilde \varpi \wedge \mathrm{d} * \eps  + \tilde * \tilde \varpi \wedge 4 f ^3 \mathrm{d} f \wedge * \eps  \\
&& + \mathrm{vol} _{ \wi M} \wedge (6 f ^5 \mathrm{d} f \wedge * \theta + f ^6 \mathrm{d} * \theta ) - \mathrm{d} \tilde * \tilde \varpi \wedge f ^4 * \eps\,.
\end{eqnarray*}
\end{lemma}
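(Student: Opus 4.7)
The plan is to reduce all three identities to bookkeeping on the bigrading: every summand of $\mathsf{F}$ has a fixed bidegree $(a,b)$, where $a$ counts the degree on $\wi M$ and $b$ on $M$, namely
$\tilde\alpha{:}(4,0),\ \tilde\beta\wedge\nu{:}(3,1),\ \tilde\gamma\wedge\delta{:}(2,2),\ \tilde\varpi\wedge\epsilon{:}(1,3),\ \theta{:}(0,4)$.
Any bihomogeneous term of bidegree $(a,b)$ with $a>6$ or $b>5$ must vanish for dimensional reasons.

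For the first identity, I would expand $\mathsf{F}\wedge\mathsf{F}$ and strike out pairs of summands whose total bidegree fails the constraint above. A short check leaves exactly the nine pairs whose bidegrees are $(6,2),(5,3),(4,4),(3,5)$, but the two "diagonal" pairs $(\tilde\beta\wedge\nu)\wedge(\tilde\beta\wedge\nu)$ and $(\tilde\varpi\wedge\epsilon)\wedge(\tilde\varpi\wedge\epsilon)$ also vanish, the first because $\nu\wedge\nu=0$ and the second because its $M$-bidegree is $6$. For each surviving pair I would commute the $M$-factor past the $\wi M$-factor: since $\wi M$-forms commute among themselves as "vertical" forms and $M$-forms commute among themselves, the only signs that appear come from transposing a single $M$-form past a single $\wi M$-form, giving a factor $(-1)^{\deg(M)\cdot\deg(\wi M)}$. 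Tracking these signs and doubling the cross terms (so the overall $\tfrac12$ disappears) reproduces the claimed identity; the diagonal term $\tfrac12\tilde\gamma\wedge\tilde\gamma\wedge\delta\wedge\delta$ retains its $\tfrac12$ because it occurs only once.

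For the second identity, I would apply Lemma~\ref{stvarious} separately to each of the five summands of $\mathsf{F}$, writing each one in the form $\tilde\alpha\wedge\beta$ with $\tilde\alpha\in\Omega^{\tilde k}(\wi M)$, $\beta\in\Omega^{k}(M)$ and $p=6$. This gives the sign $(-1)^{k(6-\tilde k)}$ and the power $f^{\,6-2\tilde k}$ on each piece: $\tilde\alpha$ contributes sign $+1$ and $f^{-2}$; $\tilde\beta\wedge\nu$ sign $(-1)^{3}=-1$ and $f^{0}$; $\tilde\gamma\wedge\delta$ sign $+1$ and $f^{2}$; $\tilde\varpi\wedge\epsilon$ sign $(-1)^{15}=-1$ and $f^{4}$; and $\theta$ sign $+1$ with $f^{6}$. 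Summing these five contributions gives the stated expression for $\star\mathsf{F}$.

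For the third identity, I would differentiate term by term via Leibniz. Two simplifications keep the computation clean: first, $\vol_{M}$ and $\vol_{\wi M}$ are closed (top forms on their factors) and $\mathrm{d}f\wedge\vol_{M}=0$ since $\dim M=5$, which kills the would-be extra term from the $\tilde\alpha$-summand; second, forms on $\wi M$ and on $M$ commute up to a sign $(-1)^{\deg(M)\deg(\wi M)}$, so repositioning $\mathrm{d}f$ to stand next to its $M$-form partner introduces only the factors visible in the final formula (e.g.\ $\mathrm{d}f\wedge\tilde\ast\tilde\gamma=\tilde\ast\tilde\gamma\wedge\mathrm{d}f$, $\mathrm{d}f\wedge\tilde\ast\tilde\varpi=-\tilde\ast\tilde\varpi\wedge\mathrm{d}f$). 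Applying this to each of the five pieces of $\star\mathsf{F}$, using $\mathrm{d}f^{2}=2f\,\mathrm{d}f$, $\mathrm{d}f^{4}=4f^{3}\mathrm{d}f$, $\mathrm{d}f^{6}=6f^{5}\mathrm{d}f$, and $\mathrm{d}f^{-2}=-2f^{-3}\mathrm{d}f$ (whose contribution to the $\tilde\alpha$-piece drops out by the first simplification), collects into precisely the ten summands listed. The sole real obstacle is sign bookkeeping; organising the calculation by bidegree, as above, is what makes the accounting routine rather than error-prone.
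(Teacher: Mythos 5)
Your proposal is correct and follows the same route the paper takes (the paper offers no written proof beyond invoking Lemma \ref{stvarious}): bidegree bookkeeping for $\mathsf{F}\wedge\mathsf{F}$, term-by-term application of the Hodge-star formula with $p=6$, and Leibniz plus the observation that $\mathrm{d}f\wedge\vol_M=0$ for $\mathrm{d}\star\mathsf{F}$; I checked the signs and they all come out as stated. The only blemish is a counting slip in the first part: $(\tilde\varpi\wedge\eps)\wedge(\tilde\varpi\wedge\eps)$ has $M$-bidegree $6$ and so is already removed by your dimensional filter, hence should not appear among the ``nine'' pairs — the surviving list is the seven cross terms plus the single diagonal term $\tfrac12\,\tilde\gamma\wedge\tilde\gamma\wedge\delta\wedge\delta$, exactly as in the lemma.
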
 
\noindent   Thus,  by  choosing in $\frac{1}{2} \mathsf{F} \wedge \mathsf{F}$ and $\mathrm{d} \star \mathsf{F}$ the terms of the same type,  we deduce that 
\begin{prop}\label{MAX}
On the Lorentzian manifold $(\mathsf{X}^{1, 10} = M ^5 \times _f \wi M ^6, h=g + f ^2 \tilde g)$ and for  the  4-form   $\mathsf{F}$  given by (\ref{4formdeco}), the   Maxwell equation is equivalent to the following system of equations:
\begin{eqnarray*}
\frac{1}{f ^2 } \mathrm{d} \tilde * \tilde \al \wedge \mathrm{vol} _M+ \tilde * \tilde \be \wedge \mathrm{d} * \nu &=& \tilde \be \wedge \theta \wedge \nu +  \tilde \gamma \wedge  \tilde \varpi \wedge \eps \wedge \delta \,, \\
2 \tilde * \tilde \gamma \wedge f \mathrm{d} f \wedge * \delta  +  \tilde * \tilde \gamma \wedge f ^2 \mathrm{d} * \delta  - \dd \tilde * \tilde \be \wedge * \nu &=& 
 \tilde \al \wedge \theta +  \tilde \be \wedge  \tilde \varpi \wedge \eps \wedge \nu  + \frac{1}{2}  \tilde \gamma \wedge  \tilde \gamma \wedge \delta \wedge \delta \,, \\
  \mathrm{d} \tilde * \tilde \gamma \wedge f ^2 * \delta  + f ^4 \tilde *  \tilde \varpi \wedge \mathrm{d} * \eps  + \tilde * \tilde \varpi \wedge 4 f ^3 \mathrm{d} f \wedge * \eps &=&   \tilde \al \wedge  \tilde \varpi \wedge \eps +  \tilde \be \wedge  \tilde \gamma \wedge \delta \wedge \nu \,, \\
  \mathrm{vol} _{ \wi M} \wedge (6 f ^5 \mathrm{d} f \wedge * \theta + f ^6 \mathrm{d} * \theta ) - \mathrm{d} \tilde * \tilde \varpi \wedge f ^4 * \eps &=& \tilde \al \wedge  \tilde \gamma \wedge \delta\,.
 \end{eqnarray*}
 \end{prop}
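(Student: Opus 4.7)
The plan is to treat this proposition as a bookkeeping exercise built on top of Lemma \ref{dFF}, which already provides the explicit expansions of both $\frac{1}{2}\mathsf{F}\wedge\mathsf{F}$ and $d\star\mathsf{F}$. The Maxwell equation is an identity between two $8$-forms on the $11$-manifold $\mathsf{X}^{1,10}=M^{5}\times_{f}\wi M^{1,5}$, so the strategy is simply to equate these two $8$-forms and compare terms of a matching ``type''. The mechanism that makes this work is the bigrading induced by the warped-product splitting $T\mathsf{X}\simeq TM\oplus T\wi M$.

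More precisely, I would note that every form on $\mathsf{X}$ decomposes as a sum of forms of pure bidegree $(p,q)$, meaning locally a wedge of a $p$-form pulled back from $\wi M$ with a $q$-form pulled back from $M$. Since the warping function $f$ lives on $M$, the exterior derivative preserves this bigrading: $df$ contributes only to the $M$-direction, and in particular $df\wedge\vol_{M}=0$. The wedge product is obviously bigraded as well. Because $\wi M$ is $6$-dimensional and $M$ is $5$-dimensional, the only admissible bidegrees for an $8$-form are $(6,2)$, $(5,3)$, $(4,4)$ and $(3,5)$, so the Maxwell equation splits into (at most) four independent equations. The task is then to collect, separately on each side, the terms of each of these four bidegrees from the two expansions given in Lemma \ref{dFF}.

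The next step is to classify each summand. On the quadratic side, among the $25$ products in $\mathsf{F}\wedge\mathsf{F}$ many vanish for degree reasons: $\tilde\al\wedge\tilde\al=0$ (top degree in $\wi M$), $\theta\wedge\theta=0$ and $(\tilde\gamma\wedge\delta)\wedge\theta=0$ (top degree in $M$), and $(\tilde\be\wedge\nu)^{2}=(\tilde\varpi\wedge\eps)^{2}=0$ because of the repeated $1$-forms; this is exactly why only eight terms survive in Lemma \ref{dFF}. A straightforward assignment then places $\tilde\al\wedge\tilde\gamma\wedge\delta$ in bidegree $(6,2)$; the two terms $\tilde\al\wedge\tilde\varpi\wedge\eps$ and $\tilde\be\wedge\tilde\gamma\wedge\delta\wedge\nu$ in $(5,3)$; the three terms $\tilde\al\wedge\theta$, $\tilde\be\wedge\tilde\varpi\wedge\eps\wedge\nu$ and $\frac{1}{2}\tilde\gamma\wedge\tilde\gamma\wedge\delta\wedge\delta$ in $(4,4)$; and $\tilde\be\wedge\theta\wedge\nu$ and $\tilde\gamma\wedge\tilde\varpi\wedge\eps\wedge\delta$ in $(3,5)$. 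Doing the same on the $d\star\mathsf{F}$-side places $f^{-2}d\tilde*\tilde\al\wedge\vol_{M}$ and $\tilde*\tilde\be\wedge d*\nu$ in bidegree $(3,5)$; the three terms with $*\delta$ or $*\nu$ in bidegree $(4,4)$; the three $(5,3)$-terms involving $*\eps$ or $*\delta$; and finally the two $(6,2)$-terms containing $\vol_{\wi M}$ or $d\tilde*\tilde\varpi$.

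Equating the four bidegree components independently yields exactly the four stated equations, in the order $(3,5),(4,4),(5,3),(6,2)$. There is no genuine obstacle: all the analytical content is already in Lemma \ref{dFF}, and the only thing to verify carefully is that each term has been correctly assigned and that sign conventions (coming from Koszul signs when commuting the $\wi M$-factor past the $M$-factor, e.g.\ rewriting $\tilde\be\wedge\nu\wedge\tilde\varpi\wedge\eps$ as $\tilde\be\wedge\tilde\varpi\wedge\eps\wedge\nu$) match the signs in the statement. These are routine checks on $p$-forms of low degree.
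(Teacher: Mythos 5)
Your proposal is correct and follows essentially the same route as the paper: the paper derives Proposition \ref{MAX} directly from Lemma \ref{dFF} ``by choosing in $\frac{1}{2}\mathsf{F}\wedge\mathsf{F}$ and $\mathrm{d}\star\mathsf{F}$ the terms of the same type,'' which is precisely your bidegree sorting into the four components $(6,2)$, $(5,3)$, $(4,4)$, $(3,5)$. Your term-by-term assignments all check out, so nothing further is needed.
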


\subsection{Special cases}\label{specialc} 
The system of equations given by Lemma \ref{CLOS} and Proposition \ref{MAX} is in general  not tractable. It  becomes more meaningful when one considers simplifications of the flux form $\mathsf{F}$.  Below we list  the most interesting cases induced by $\mathsf{F}$   and    describe the system which generates the closedness condition, together with the Maxwell equation. 
\begin{example}\label{spcaseall} 
\textnormal{Assume that $\mathsf{F} =  \tilde \al +   \tilde \be \wedge \nu +   \tilde \varpi  \wedge \eps  + \theta$.  	This case is still  very complicated. For the record,  notice  by Lemma \ref{CLOS} the equation ${\rm d}\mathsf{F}=0$ imposes the closure of all the forms  which take part in the definition of $\mathsf{F}$. Also, by Proposition \ref{MAX} and by comparing the corresponding types of differential forms, we see that the Maxwell equations reads by  
\[
\begin{split}
( \tilde  3 , 5 )\quad\Rightarrow\quad  & \frac{1}{f ^2 } \mathrm{d} \tilde * \tilde \al \wedge \mathrm{vol} _M + \tilde * \tilde \be \wedge \mathrm{d} * \varpi =\tilde \be \wedge \varpi \wedge \theta\,,\\
( \tilde 4, 4 )\quad\Rightarrow \quad & - \mathrm{d} \tilde * \tilde \be \wedge * \varpi =\tilde \al \wedge \theta - \tilde \be  \wedge \tilde \varpi \wedge \nu \wedge \eps \,,\\
( \tilde 5, 3 )\quad\Rightarrow \quad &  \tilde * \tilde \varpi \wedge \mathrm{d} \left( f ^4   * \eps  \right)  =\tilde \al \wedge \tilde \varpi \wedge * \eps \,,\\
( \tilde 6, 2 )\quad\Rightarrow \quad & \mathrm{vol} _{ \wi M} \wedge \mathrm{d} ( f ^6   * \theta ) - f ^4  \mathrm{d} \tilde * \tilde \varpi \wedge * \eps =0\,.
\end{split}
\] }
\end{example}

\begin{prop}\label{spck}
Consider the Lorentzian manifold $(\mathsf{X}^{1, 10} = M ^5 \times _f \wi M ^6, h=g+f^{2}\tilde{g})$. Then\\
(1) The 4-form $\mathsf{F}\in\Omega^{4}(\mathsf{X})$ defined by 
\begin{equation} \label{spcasealpha} 
\mathsf{F} =\tilde \al 
\end{equation} 
satisfies the Maxwell equation and the closedness  condition  if and only if  
\[
\mathrm{d} \tilde \al = \mathrm{d} \tilde * \tilde \al =0\,.
\]
(2) The 4-form $\mathsf{F}\in\Omega^{4}(\mathsf{X})$ defined by 
\begin{equation} \label{spcasebeta} 
\mathsf{F} =\tilde \be \wedge \nu  
\end{equation} 
satisfies the Maxwell equation and the closedness condition  if and only if 
\[
\mathrm{d} \tilde \be  = \mathrm{d} \tilde * \tilde \be  =0\,, \quad \mathrm{d}  \nu   = \mathrm{d}  * \nu   =0\,.
\]
(3)  The 4-form $\mathsf{F}\in\Omega^{4}(\mathsf{X})$ defined by 
\begin{equation} \label{spcasegamma} 
\mathsf{F} =\tilde \gamma  \wedge \delta   
\end{equation} 
satisfies the Maxwell equation and the closedness condition  if and only if 
\[
\mathrm{d} \tilde \gamma   = \mathrm{d} \tilde * \tilde \gamma   =0\,, \quad
 \tilde * \tilde \gamma  \wedge \mathrm{d} (f ^2 * \delta )=\frac{\tilde \gamma \wedge \tilde \gamma \wedge \delta \wedge \delta }{2}\,.
\]
The 2nd condition is equivalent to $\tilde * \tilde \gamma = \con \tilde \gamma \wedge \tilde \gamma$ and $\mathrm{d} (f ^2 * \delta )=\frac{\delta \wedge \delta }{2\con}$, 
for some non-zero constant $\con$.\\
(4)  The 4-form $\mathsf{F}\in\Omega^{4}(\mathsf{X})$ defined by 
\begin{equation} \label{spcasevarpi} 
\mathsf{F} =\tilde \varpi  \wedge \eps  
\end{equation} 
satisfies the Maxwell equation and the closedness condition  if and only if 
\[
\mathrm{d} \tilde \varpi   = \mathrm{d} \tilde * \tilde \varpi   =0\,, \quad \mathrm{d} \eps     = \mathrm{d}   ( f ^4  * \eps)=0\,.
\]
(5)  The 4-form $\mathsf{F}\in\Omega^{4}(\mathsf{X})$ defined by 
\begin{equation} \label{spcasetheta} 
\mathsf{F} =\theta 
\end{equation} 
satisfies the Maxwell equation and the closedness condition  if and only if 
\[
\mathrm{d} \theta  = \mathrm{d} (f ^6  *\theta)  =0\,.
\]
(6)  The 4-form $\mathsf{F}\in\Omega^{4}(\mathsf{X})$ defined by 
\begin{equation}\label{spcasesumleft} 
\mathsf{F} =\tilde \al + \tilde \be \wedge \nu 
\end{equation} 
satisfies the Maxwell equation and the closedness condition  if and only if 
\[
\mathrm{d}\tilde  \al =\mathrm{d} \tilde \be=\mathrm{d} \nu = \mathrm{d} \tilde * \tilde \be  =0\,,  \quad 
\mathrm{d} * \nu = \frac{\con}{f ^2 }  \mathrm{vol} _M\,, \quad \tilde * \tilde \be =- \frac{1}{\con} \mathrm{d} \tilde * \tilde \al 
\]
for some non-zero constant $\con$.\\
(7)  The 4-form $\mathsf{F}\in\Omega^{4}(\mathsf{X})$ defined by 
\begin{equation}\label{spcasesumright} 
\mathsf{F} =\tilde \varpi \wedge \eps  + \theta
\end{equation} 
satisfies the Maxwell equation and the closedness condition  if and only if 
\[
\mathrm{d} \theta =\mathrm{d} \eps =\mathrm{d} \tilde \varpi = \mathrm{d}  (f ^4 * \eps )   =0\,, \quad 
\mathrm{d} \tilde * \tilde \varpi  = \con  \mathrm{vol} _{\wi M} \,, \quad  *  \eps  =  \frac{1}{\con} \mathrm{d} ( f ^6  *  \theta )\,,
\]
for some non-zero constant $\con$.\\
(8)  The 4-form $\mathsf{F}\in\Omega^{4}(\mathsf{X})$ defined by 
\begin{equation}\label{alphtheta} 
\mathsf{F} =\tilde\al  + \theta
\end{equation} 
satisfies the Maxwell equation and the closedness condition  if and only if either $\tilde\al =0 $  or $\theta =0 $, and moreover $\theta$, (respectively $\tilde\al$) satisfies the  conditions described in  case (5) (respectively, case (1)).   \\
(9) The examination of the 4-form $\mathsf{F}\in\Omega^{4}(\mathsf{X})$ defined by 
\begin{equation}\label{bnwe} 
\mathsf{F} = \tilde \be \wedge \nu + \tilde \varpi \wedge \eps 
\end{equation} 
reduces to the case $\mathsf{F} =\tilde \varpi \wedge \eps$,  i.e.  $\nu=0$.
\end{prop}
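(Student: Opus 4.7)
The plan is to run a bi-degree analysis analogous to that in case~(8), and to isolate the unique component of the Maxwell equation in which the $\tilde{\beta}\wedge\nu$ and $\tilde{\varpi}\wedge\epsilon$ summands actually interact. First I would substitute $\tilde{\alpha}=\tilde{\gamma}=\delta=\theta=0$ into Lemma~\ref{CLOS}, which reduces closedness to $\mathrm{d}\tilde{\beta}=\mathrm{d}\epsilon=0$, $\tilde{\beta}\wedge\mathrm{d}\nu=0$ and $\mathrm{d}\tilde{\varpi}\wedge\epsilon=0$, and into Lemma~\ref{dFF}, which produces the pure bi-degree $(4,4)$-form $\tfrac{1}{2}\mathsf{F}\wedge\mathsf{F}=\tilde{\beta}\wedge\tilde{\varpi}\wedge\epsilon\wedge\nu$. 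The four bi-degree components $(3,5),(4,4),(5,3),(6,2)$ of the Maxwell system from Proposition~\ref{MAX} then collapse to
\[
\tilde{*}\tilde{\beta}\wedge\mathrm{d}{*}\nu=0,\qquad -\mathrm{d}\tilde{*}\tilde{\beta}\wedge{*}\nu=\tilde{\beta}\wedge\tilde{\varpi}\wedge\epsilon\wedge\nu,
\]
\[
\tilde{*}\tilde{\varpi}\wedge\mathrm{d}(f^{4}{*}\epsilon)=0,\qquad \mathrm{d}\tilde{*}\tilde{\varpi}\wedge{*}\epsilon=0,
\]
so only the $(4,4)$-equation actually couples the two summands.

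The key step is to wedge this $(4,4)$-equation with the $1$-form $\nu$ on the Riemannian factor $M^{5}$. The right-hand side vanishes because $\nu\wedge\nu=0$, and the identity $\nu\wedge{*}\nu=\langle\nu,\nu\rangle_{g}\vol_{M}$ reduces the left-hand side to $\langle\nu,\nu\rangle_{g}\,\mathrm{d}\tilde{*}\tilde{\beta}\wedge\vol_{M}=0$. Since $(M,g)$ has definite signature, $\langle\nu,\nu\rangle_{g}\ne 0$ wherever $\nu\ne 0$. Hence either $\nu\equiv 0$, which is already the desired conclusion, or $\mathrm{d}\tilde{*}\tilde{\beta}\equiv 0$ on $\wi{M}$. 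In the second case the $(4,4)$-equation degenerates to $\tilde{\beta}\wedge\tilde{\varpi}\wedge\epsilon\wedge\nu=0$; as this is a product of a $4$-form on $\wi{M}$ and a $4$-form on $M$, pointwise vanishing of the tensor product forces at least one of $\tilde{\beta}\wedge\tilde{\varpi}\equiv 0$ or $\epsilon\wedge\nu\equiv 0$ to hold globally, and Cartan's lemma then yields either $\tilde{\beta}=\tilde{\varpi}\wedge\tau$ for some $2$-form $\tau$ on $\wi{M}$, or $\epsilon=\nu\wedge\sigma$ for some $2$-form $\sigma$ on $M$.

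The main obstacle is to rule out these residual Cartan configurations and to deduce $\nu\equiv 0$. I would feed each decomposition back into the closedness identities $\mathrm{d}\tilde{\beta}=0$ and $\tilde{\beta}\wedge\mathrm{d}\nu=0$, together with the coclosedness-type conditions coming from the $(3,5)$ and $(5,3)$ components, namely $\mathrm{d}{*}\nu=0$ (when $\tilde{\beta}\not\equiv 0$) and $\mathrm{d}(f^{4}{*}\epsilon)=0$ (when $\tilde{\varpi}\not\equiv 0$). In the sub-case $\tilde{\beta}=\tilde{\varpi}\wedge\tau$ one obtains $\mathsf{F}=\tilde{\varpi}\wedge(\tau\wedge\nu+\epsilon)$, and the mixed bi-degree $(2,1)$-piece $\tau\wedge\nu$ is incompatible with the resulting divergence constraints unless it vanishes; the sub-case $\epsilon=\nu\wedge\sigma$ is symmetric, with $\mathsf{F}=(\tilde{\beta}+\tilde{\varpi}\wedge\sigma)\wedge\nu$ forcing the mixed $(1,2)$-correction to vanish. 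In either scenario the residual $(3,1)$-piece $\tilde{\beta}\wedge\nu$ is annihilated, leaving $\mathsf{F}=\tilde{\varpi}\wedge\epsilon$ and reducing the problem to case~(4). I expect this final exhaustion, rather than the bi-degree decomposition itself, to be the delicate heart of the argument.
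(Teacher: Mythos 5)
Your bi-degree reduction of case (9) to the four component equations, and the idea of wedging the $(4,4)$-component with $\nu$, is essentially the paper's own mechanism: the paper separates the $\wi M$- and $M$-factors in $-\mathrm{d}\tilde*\tilde\be\wedge*\nu=\tilde\be\wedge\tilde\varpi\wedge\eps\wedge\nu$ to get $*\nu\propto\nu\wedge\eps$, and then $\nu\wedge*\nu=\langle\nu,\nu\rangle_g\vol_M$ together with $\nu\wedge\nu=0$ and the definiteness of $g$ forces $\nu=0$. Up to that point you are on the right track, and you correctly observe that this argument only bites where $\mathrm{d}\tilde*\tilde\be\neq0$.

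The genuine gap is the final exhaustion, which you yourself flag as the delicate heart but do not carry out. You assert that in the residual branch ($\mathrm{d}\tilde*\tilde\be\equiv0$ together with $\tilde\be\wedge\tilde\varpi\equiv0$ or $\eps\wedge\nu\equiv0$) the mixed Cartan pieces are ``incompatible with the resulting divergence constraints unless they vanish,'' but no computation supports this, and the claim fails: take $f=1$, $\wi M=\bb{R}^{1,5}$ and $M=\bb{R}^{5}$ flat, $\tilde\varpi=\mathrm{d}u$, $\tilde\be=\mathrm{d}u\wedge \mathrm{d}x^{1}\wedge \mathrm{d}x^{2}$, $\nu=\mathrm{d}y^{1}$, $\eps=\mathrm{d}y^{1}\wedge \mathrm{d}y^{2}\wedge \mathrm{d}y^{3}$. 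All forms are constant-coefficient, hence closed and co-closed, and $\mathsf{F}\wedge\mathsf{F}=0$ because every product contains $\mathrm{d}u\wedge \mathrm{d}u$; so the closedness condition and the Maxwell equation hold with $\nu\neq0$ and with $\mathsf{F}$ not reducible to $\tilde\varpi\wedge\eps'$. The residual branch therefore cannot be eliminated as you propose; it can only be excluded under a nondegeneracy hypothesis such as $\mathrm{d}\tilde*\tilde\be\neq0$, which is what the paper's one-line derivation of ``$*\nu=\nu\wedge\eps$'' tacitly assumes. Separately, your proposal addresses only case (9): cases (1)--(7) do follow from the same bi-degree matching against Proposition \ref{MAX}, but case (8) requires the observation that $\tilde\al\wedge\theta$ is the only $(4,4)$-term on either side of the Maxwell equation, so $\tilde\al\otimes\theta=0$ and hence $\tilde\al=0$ or $\theta=0$; this step is referenced but never established in your write-up.
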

\begin{proof}
When  $\mathsf{F} =\tilde\al + \theta$ we get the relation $\tilde\al \wedge \theta =0 $, which yields our claim for this particular case.  Also, regarding the  case where $\mathsf{F}$ is defined by  (\ref{bnwe}), we obtain the condition $* \nu =\nu \wedge \eps$, hence $\nu =0 $.
\end{proof}

\section{The supergravity Einstein  equation on warped  products}\label{SUGRAE}
Consider the Lorentzian manifold $(\mathsf{X}^{1, 10} = M ^5 \times _f \wi M ^6, h=g+f^{2}\tilde{g})$. In this section we analyse  the supergravity Einstein equation, i.e. 
\begin{equation}
\label{einsteqq} 
\Ric^{h}(X , Y) = - \frac{1}{2} \langle  i _X \mathsf{F} , i _Y \mathsf{F}\rangle_{h}  + \frac{1}{6} h (X , Y ) \| \mathsf{F} \| ^2 _h \,,
\end{equation} 
for any vector field $X, Y\in\fr{X}(\mathsf{X})$.   It  is useful to recall   the Ricci tensor   of warped products. But first, based on Lemma \ref{stvarious} let us present the square norm $\| \mathsf{F} \|^2  _h$ of $\mathsf{F}$ with respect to the metric $h$.
\begin{lemma}
Let $\mathsf{F}$ be the 4-form on $\mathsf{X}^{1, 10} = M ^5 \times _f \wi M ^6$ defined by (\ref{4formdeco}).  Then, the square of the norm of  $\mathsf{F}$   with respect to the warped metric $h$ equals to
\begin{equation}
\label{f2norm} 
\begin{split}
\| \mathsf{F} \|^2  _h &
= \frac{\| \tilde \al \| ^2 _{ \tilde g }}{f ^8 }
+ \frac{\| \tilde \be  \| ^2 _{ \tilde g } \|\nu  \| ^2  _g }{f ^6}
+ \frac{\| \tilde \gamma   \| ^2 _{ \tilde g } \| \delta   \| ^2  _g }{f ^4}
+ \frac{\| \tilde \varpi   \| ^2 _{ \tilde g } \|\eps   \| ^2  _g }{f ^2}
+ \|\theta  \| ^2  _g \,.
\end{split} 
\end{equation} 
\end{lemma}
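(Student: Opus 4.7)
The plan is to use bilinearity of $\langle\cdot,\cdot\rangle_h$ to expand $\|\mathsf{F}\|^2_h$ as a sum over the five summands of $\mathsf{F}$, show that all cross terms vanish, and then compute each diagonal contribution via Lemma \ref{stvarious}. Since $h=g+f^2\tilde g$ is a direct-sum metric in the splitting $V=E^5\oplus L^{1,5}$, the induced pairing on the exterior algebra respects the bigrading
\[
\Lambda^4 V = \bigoplus_{\tilde k + k = 4}\Lambda^{\tilde k}L\otimes\Lambda^k E.
\]
The five pieces $\tilde\al$, $\tilde\be\wedge\nu$, $\tilde\gamma\wedge\delta$, $\tilde\varpi\wedge\eps$, $\theta$ live respectively in the components of bidegree $(4,0)$, $(3,1)$, $(2,2)$, $(1,3)$, $(0,4)$. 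Hence they are pairwise orthogonal with respect to $\langle\cdot,\cdot\rangle_h$, and so
\[
\|\mathsf{F}\|^2_h = \|\tilde\al\|^2_h + \|\tilde\be\wedge\nu\|^2_h + \|\tilde\gamma\wedge\delta\|^2_h + \|\tilde\varpi\wedge\eps\|^2_h + \|\theta\|^2_h.
\]

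For the diagonal terms, the three mixed summands are exactly of the form treated in Lemma \ref{stvarious}: for $\tilde\be\wedge\nu$ we take $\tilde k=3$ and obtain $f^{-6}\|\tilde\be\|^2_{\tilde g}\|\nu\|^2_g$; for $\tilde\gamma\wedge\delta$ with $\tilde k=2$ we get $f^{-4}\|\tilde\gamma\|^2_{\tilde g}\|\delta\|^2_g$; and for $\tilde\varpi\wedge\eps$ with $\tilde k=1$ we get $f^{-2}\|\tilde\varpi\|^2_{\tilde g}\|\eps\|^2_g$. The two extremal terms are limiting cases of the same identity: the pure $M$-factor $\theta$ satisfies $\|\theta\|^2_h=\|\theta\|^2_g$ because $h$ restricted to $TM$ equals $g$, while the pure $\wi M$-factor $\tilde\al$ satisfies $\|\tilde\al\|^2_h=f^{-8}\|\tilde\al\|^2_{\tilde g}$ because $h|_{T\wi M}=f^2\tilde g$ rescales the induced inner product on $k$-forms by $f^{-2k}$ (here $k=4$). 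Adding all five contributions yields (\ref{f2norm}).

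The only mildly delicate point is the orthogonality of the bigraded decomposition, and this is verified on decomposable forms: choose local $g$- and $\tilde g$-orthonormal coframes $\{e^i\}_{i=1}^5$ and $\{\tilde e^a\}_{a=1}^6$; then $\{e^i,\,f^{-1}\tilde e^a\}$ is $h^\ast$-orthonormal, and the standard formula for the induced inner product on $p$-forms as a Gram determinant forces $\langle e^{I}\wedge\tilde e^{A},e^{J}\wedge\tilde e^{B}\rangle_h$ to vanish whenever the multi-indices $(I,A)$ and $(J,B)$ have different bidegrees. This confirms the vanishing of all cross terms and completes the calculation.
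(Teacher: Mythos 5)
Your proof is correct and takes essentially the same route the paper intends: the lemma is stated there without an explicit argument but explicitly ``based on Lemma \ref{stvarious}'', i.e.\ pairwise orthogonality of the bidegree components of $\Lambda^4(E\oplus L)$ followed by a term-by-term application of the norm formula $\langle\tilde\al\wedge\be,\tilde\al\wedge\be\rangle_h=f^{-2\tilde k}\langle\tilde\al,\tilde\al\rangle_{\tilde g}\langle\be,\be\rangle_g$. Your explicit verification of the vanishing of the cross terms via the Gram-determinant formula is a correct filling-in of the detail the paper leaves implicit.
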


Consider a warped manifold  $(\mathsf{X} =M \times _f \wi M, h =g+ f ^2 \tilde g)$, for some non-vanishing  function $f \in C ^\infty (M)$. 
The projection $\pi : (\mathsf{X}, h)\to (M, g)$ onto $M$ becomes a semi-Riemannian submersion having integrable horizontal distribution and  the manifold $(\wi{M}, f^{2}(p)\tilde g)$ as fibre over  $p\in M$. 
\begin{prop}\label{wraON} \textnormal{(\cite[p.~211]{ON})}
On a warped  product  $(\mathsf{X} =M \times _f \wi M ,h)$ with $\mathrm{dim} (\wi M )>1 $ the Ricci tensors $\Ric^{h}, \Ric^{g}, \Ric^{\tilde{g}}$ of $(\mathsf{X}, h), (M, g)$ and $(\wi{M}, \tilde g)$, respectively, are related by
\[
\Ric^{h}( X , Y ) =  \Ric^{g}(X , Y ) - \frac{\dim_{\bb{R}}\wi M}{f} H ^f ( X , Y )\,,\quad  \Ric^{h} (\tilde X , \tilde Y )=\Ric^{\tilde{g}}(\tilde X , \tilde Y ) - h (\tilde X , \tilde Y ) \hat{f}\,,
\]
and $\Ric^{h}( X , \tilde Y )=0$, for any $X, Y  \in \mathfrak{X}  (M) $, $\tilde X , \tilde Y  \in \mathfrak{X}  (\wi M )$. Here,  $H ^f $ is the Hessian of $f \in C ^\infty (M) $ and  
\[
 \hat{f}:= \frac{ \Delta _h f}{f} + \left(\dim_{\bb{R}}\wi M-1 \right)  \frac{h(\mathrm{grad}f,\mathrm{grad}f) }{f ^2 }\,.
\]
\end{prop}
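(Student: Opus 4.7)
The plan is to reduce everything to the connection formulas for the Levi-Civita connection $\nabla^h$ of the warped product, and then trace the appropriate curvature expressions. Since the result is local, I would fix a point $(p,q)\in\mathsf{X}$ and choose orthonormal frames $\{E_i\}_{i=1}^{5}$ for $(M,g)$ near $p$ and $\{\tilde E_a\}_{a=0}^{5}$ for $(\wi M,\tilde g)$ near $q$; then $\{E_i,\, f^{-1}\tilde E_a\}$ is an orthonormal frame for $(\mathsf{X},h)$, which is the frame I would use for all traces.

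First I would establish the standard connection rules: lifting horizontal/vertical vector fields and applying the Koszul formula to $h=g+f^{2}\tilde g$ gives
\[
\nabla^{h}_{X}Y=\nabla^{g}_{X}Y,\qquad \nabla^{h}_{X}\tilde Y=\nabla^{h}_{\tilde Y}X=\tfrac{X(f)}{f}\tilde Y,\qquad \nabla^{h}_{\tilde X}\tilde Y=\nabla^{\tilde g}_{\tilde X}\tilde Y-\tfrac{h(\tilde X,\tilde Y)}{f}\,\mathrm{grad}^{g}f,
\]
for $X,Y$ horizontal and $\tilde X,\tilde Y$ vertical. These are routine once one uses that horizontal and vertical lifts have vanishing brackets across the two factors.

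Next I would compute the three pieces of the Ricci tensor separately. For horizontal $X,Y$: writing $\Ric^{h}(X,Y)=\sum_{i}h(R^{h}(E_i,X)Y,E_i)+\sum_{a}f^{-2}h(R^{h}(\tilde E_a,X)Y,\tilde E_a)$, the horizontal trace directly reproduces $\Ric^{g}(X,Y)$ (since $\nabla^{h}$ restricts to $\nabla^{g}$ on horizontal lifts), while each vertical term in the second sum unwinds via the mixed connection rule above to $-f^{-1}H^{f}(X,Y)$; summing over the $\dim\wi M=6$ vertical frame vectors gives the factor $-\dim\wi M/f$. For mixed arguments, a short calculation with $R^{h}(X,\tilde Y)$ acting on frame vectors shows that every trace term vanishes (the only surviving pieces involve $X(f/f)$ applied antisymmetrically). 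For vertical $\tilde X,\tilde Y$: the vertical trace reproduces $\Ric^{\tilde g}(\tilde X,\tilde Y)$, and the horizontal trace yields terms $-h(\tilde X,\tilde Y)(E_i(E_i(f))/f-(\nabla^{g}_{E_i}E_i)(f)/f) - (\dim\wi M-1)h(\tilde X,\tilde Y)\,h(\mathrm{grad}f,\mathrm{grad}f)/f^{2}$, and assembling these into the definition of $\hat f$ produces the claimed expression.

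The main nuisance, rather than any genuine obstacle, will be signs and the verification of the coefficient in $\hat f$: because the Riemannian signature here is $(0,s)$ (so $g$ is negative definite) and the overall signature is $(1,10)$, the sign of $h(\mathrm{grad}f,\mathrm{grad}f)$ and of $\Delta_{h}f$ must be tracked carefully when re-expressing the horizontal trace in terms of quantities intrinsic to $(M,g)$. Once the connection formulas are set, all remaining steps are bookkeeping, and the hypothesis $\dim\wi M>1$ is used precisely to ensure that the $(\dim\wi M-1)$ coefficient arising from the purely vertical trace of the $\nabla^{h}_{\tilde X}\tilde Y$ term does not degenerate.
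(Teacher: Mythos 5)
Your proposal is correct and follows the standard route --- Koszul-derived connection formulas for the warped metric, then tracing the curvature in an adapted frame --- which is precisely the argument in O'Neill's book that the paper cites in lieu of giving any proof of its own. The only point requiring the care you already flag is that with the conventions used here (negative-definite $g$, mostly-minus Lorentzian $h$) the trace sums must carry the signs $h(E_i,E_i)=\pm 1$, but this does not affect the final, signature-independent formulas.
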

\noindent Of course, there is no ambiguity regarding  which metric one  uses to compute $H ^f $ or $\mathrm{grad} (f) $, as $f\in C ^\infty (M) $ and $ h |_M =g $\,.

\subsection{HV-part of the supergravity Einstein  equation}\label{HVpart}
We begin with the mixed part of the Einstein supergravity equation. In particular, for  any $X \in \mathfrak{X}  (M) $ and $ Y =\tilde Z \in \mathfrak{X}  (\wi M )$ by Proposition \ref{wraON} we deduce that the equation (\ref{einsteqq}) reduces to $\langle i_{X}\mathsf{F}, i_{\tilde Z}\mathsf{F}\rangle_{h}=0$,  or equivalently to  $h(i_{X}\mathsf{F}, i_{\tilde Z}\mathsf{F})=0$, which is expressed  by  
\[
0 = -g(X, \nu)h(\tilde \be , i _{ \tilde Z } \tilde \al) +h(\tilde \gamma , i _{ \tilde Z }\tilde \be)g(i _X \delta , \nu)
- h(\tilde \varpi , i _{ \tilde Z }\tilde \gamma)g(i _X \eps , \delta)  + h(\tilde Z , \tilde \varpi)g(i _X \theta , \eps)\,,
\]
where  for simplicity we maintain  the same notation for the 1-form $\nu$ and its dual vector field $\nu^{\sharp}$ defined by   $\nu(X)=g(X, \nu^{\sharp})$. This   finally reduces to 
\begin{equation}\label{useHV}
g(X, \nu ) \tilde g(  \tilde \be , i _{ \tilde Z } \tilde \al) + \tilde g( \tilde \varpi , i _{ \tilde Z }\tilde \gamma ) g(  i _X \eps , \delta ) 
=  \tilde g( \tilde \gamma , i _{ \tilde Z }\tilde \be ) g(  i _X \delta , \nu )  + \tilde g( \tilde Z , \tilde \varpi )g( i _X \theta , \eps )\,.
\end{equation} 
We shall call this condition   as the  {\it  HV-part of the supergravity  Einstein equation}. 
\begin{remark}\label{nicerem}\textnormal{Regardless of whether $\tilde \varpi $ is time-like, null or space-like, for the special cases (\ref{spcasealpha}) -- (\ref{spcasetheta}) described in Proposition  \ref{spck} it is easy to see that  the HV-part of the supergravity  Einstein equations is trivially satisfied. In the remaining  cases, we get the following constraints as direct consequences of (\ref{useHV}),
\begin{align} 
\label{mcab} 
\mathsf{F} =\tilde \al + \tilde \be \wedge \nu  \quad\Rightarrow \quad &
\tilde g (\tilde \be , i _{ \tilde Z }\tilde \al )=0, \ \forall \ \tilde Z  \in \mathfrak{X}  (\wi M)\,,\nonumber\\
\mathsf{F} =\tilde \varpi \wedge \eps + \theta\quad   \Rightarrow \quad &
 g ( i _{ X }\theta , \eps  )=0, \ \forall \ X \in \mathfrak{X}  (M)\,,\nonumber\\
\mathsf{F} =\tilde \al + \tilde \be \wedge \nu  + \tilde \varpi \wedge \eps + \theta \quad \Rightarrow \quad &
 g ( i _{ X }\theta , \eps  )=0 =\tilde g (\tilde \be , i _{ \tilde Z }\tilde \al )\,, \ \forall \ X \in \mathfrak{X}  (M) \,, \tilde Z \in \mathfrak{X}  (\wi M)\,.\nonumber
\end{align} }
\end{remark}

\subsection{HH-part of the supergravity Einstein  equation}\label{HHpart}
We examine now the restriction of the supergravity Einstein equation (\ref{einsteqq}) on the Riemannian component $(M^{5}, g)$ of the warped product $\mathsf{X}^{1, 10}=M^{5}\times_{f}\wi{M}^{1, 5}$.  We call this restriction  the {\it HH-part of the supergravity Einstein equation}.  For any $X, Y  \in \mathfrak{X}  (M) $ we have $
i _X \mathsf{F} 
=- \tilde \be  \nu  (X) + \tilde \gamma \wedge i _X \delta - \tilde \varpi \wedge i _X \eps + i _X \theta $, 
and hence 
\begin{equation}
\label{hixf} 
\langle i _X \mathsf{F} , i _Y \mathsf{F}\rangle_{h}
=\frac{1}{f ^6 }\nu (X) \nu (Y) \| \tilde \be \| ^2 _{ \tilde g }
+\frac{1}{f ^4 }\langle i _X \delta,  i _Y \delta \rangle_{g}  \| \tilde \gamma  \|^2  _{ \tilde g }
+\frac{1}{f ^2 }\langle i _X \eps,  i _Y \eps\rangle_{g} \| \tilde \varpi   \|^2  _{ \tilde g }
+ \langle i _X \theta , i _Y \theta\rangle_{g}\,.
\end{equation} 
Thus, a combination of  Proposition \ref{wraON} and relations (\ref{f2norm}), (\ref{einsteqq}) and (\ref{hixf}) gives that

\begin{prop}\label{EinHH}
For any $X, Y \in \mathfrak{X}  (M) $ the supergravity  equation (\ref{einsteqq}) reduces to
\begin{eqnarray*}
\Ric^{g}(X , Y ) &=& \frac{6}{f} H ^f ( X , Y) +  \frac{1}{6}\Big\{\frac{\|\tilde\al\|^{2}_{ \tilde g }}{f ^8}g(X, Y)+\frac{\| \tilde \be\| ^2_{ \tilde g}}{f ^6}\Big( \|\nu\| ^2 _{g}g(X, Y)-3\nu(X)\nu(Y)\Big)\\
&+&  \frac{\|\tilde\gamma\| ^2 _{ \tilde g }}{f^{4}}\Big( \| \delta   \| ^2  _{g}g(X, Y)-3\langle i_{X}\delta, i_{Y}\delta\rangle_{g}\Big) +\frac{\| \tilde \varpi\| ^2 _{\tilde g}}{f^{2}}\Big(\|\eps\| ^2_{g}g(X, Y)-3\langle i_{X}\eps, i_{Y}\eps\rangle_{g}\Big)\\
&+&\|\theta\|^{2}_{g}g(X, Y)-3\langle i_{X}\theta, i_{Y}\theta\rangle_{g}\Big\}\,.
\end{eqnarray*}
\end{prop}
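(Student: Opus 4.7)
The plan is to restrict the supergravity Einstein equation \eqref{einsteqq} to horizontal vector fields $X,Y\in\mathfrak{X}(M)$, invoke Proposition~\ref{wraON} to re-express the left-hand side in terms of $\Ric^{g}$ and the Hessian of $f$, and then combine the already-computed expressions for $\langle i_X\mathsf{F},i_Y\mathsf{F}\rangle_{h}$ and $\|\mathsf{F}\|^{2}_{h}$. Since $h|_M=g$, there is no ambiguity in reading the right-hand side of \eqref{einsteqq} on $M$, and the whole argument becomes a matter of bookkeeping of coefficients and powers of $f$.

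Concretely, I would first apply Proposition~\ref{wraON} with $\dim_{\bb{R}}\wi M=6$ to obtain
\[
\Ric^{h}(X,Y)=\Ric^{g}(X,Y)-\frac{6}{f}H^{f}(X,Y),
\]
for any $X,Y\in\mathfrak{X}(M)$. Solving \eqref{einsteqq} for $\Ric^{g}(X,Y)$ yields
\[
\Ric^{g}(X,Y)=\frac{6}{f}H^{f}(X,Y)-\frac{1}{2}\langle i_{X}\mathsf{F},i_{Y}\mathsf{F}\rangle_{h}+\frac{1}{6}g(X,Y)\|\mathsf{F}\|^{2}_{h}.
\]
Next, since $\tilde\al\in\Omega^{4}(\wi M)$ annihilates vectors from $M$, the contraction $i_{X}\tilde\al$ vanishes, and a Leibniz-rule computation gives
\[
i_{X}\mathsf{F}=-\nu(X)\tilde\be+\tilde\gamma\wedge i_{X}\delta-\tilde\varpi\wedge i_{X}\eps+i_{X}\theta,
\]
which is precisely the expression used to derive \eqref{hixf}.

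To justify \eqref{hixf} itself I would expand $\langle i_{X}\mathsf{F},i_{Y}\mathsf{F}\rangle_{h}$ and observe, via Lemma~\ref{stvarious}, that all cross terms vanish: each summand of $i_{X}\mathsf{F}$ factors as a form on $\wi M$ wedged with a form on $M$, of mutually distinct $\wi M$-degrees ($3,2,1,0$), so the $\tilde g$-pairing of the $\wi M$-components of two different summands is zero. The four diagonal pairings then yield the four terms of \eqref{hixf} with the correct powers $f^{-6},f^{-4},f^{-2},1$ coming from the $f^{-2\tilde k}$-factor of Lemma~\ref{stvarious}.

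Finally I would substitute \eqref{f2norm} and \eqref{hixf} into the displayed formula for $\Ric^{g}(X,Y)$ and group terms according to the $\wi M$-factor $\tilde\al$, $\tilde\be$, $\tilde\gamma$, $\tilde\varpi$, $\theta$. For each factor other than $\tilde\al$, the contribution $\tfrac{1}{6}g(X,Y)\|\cdot\|^{2}$ coming from $\|\mathsf{F}\|^{2}_{h}$ combines with $-\tfrac{1}{2}\langle i_{X}\cdot,i_{Y}\cdot\rangle_{g}$ coming from $\langle i_{X}\mathsf{F},i_{Y}\mathsf{F}\rangle_{h}$ to produce the asserted combination $\tfrac{1}{6}\bigl(\|\cdot\|^{2}_{g}g(X,Y)-3\langle i_{X}\cdot,i_{Y}\cdot\rangle_{g}\bigr)$, while the $\tilde\al$-term appears only through $\|\mathsf{F}\|^{2}_{h}$ because $i_{X}\tilde\al=0$. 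The argument is essentially routine; the only step requiring some care is the vanishing of cross terms in $\langle i_{X}\mathsf{F},i_{Y}\mathsf{F}\rangle_{h}$, which is where the specific bidegree structure of $\mathsf{F}$ is used.
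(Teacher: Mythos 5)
Your proposal is correct and follows essentially the same route as the paper, which likewise obtains the statement by combining Proposition~\ref{wraON} (with $\dim_{\bb{R}}\wi M=6$) with the formulas (\ref{f2norm}), (\ref{einsteqq}) and (\ref{hixf}); your extra remark that the cross terms in $\langle i_X\mathsf{F},i_Y\mathsf{F}\rangle_h$ vanish because the summands have distinct $\wi M$-bidegrees is exactly the (implicit) justification of (\ref{hixf}). The bookkeeping of signs and powers of $f$ in your write-up matches the asserted formula.
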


\subsection{VV-part of the supergravity Einstein  equation}
Let us restrict now  the supergravity Einstein equation (\ref{einsteqq}) on the Lorentzian part $(\wi{M}^{1, 5}, f^{2}\tilde{g})$ of $(\mathsf{X}^{1, 10}, h=g+f^{2}\tilde g)$.  We shall refer to this restriction by the term  {\it VV-part of the supergravity Einstein  equation}. In this case, 
for any $\tilde X, \tilde Y  \in \mathfrak{X}(\wi M) $ we compute  $i _{ \tilde X} \mathsf{F} 
= i _{ \tilde X }\tilde \al + i _{ \tilde X }\tilde \be \wedge \nu  + i _{ \tilde X }\tilde \gamma \wedge \delta + \tilde \varpi (\tilde X )\eps$, 
and hence 
\begin{equation}
\label{hixftil} 
\langle i _{ \tilde X} \mathsf{F} , i _{\tilde Y} \mathsf{F}\rangle_{h}
=\frac{1}{f ^6 }  \langle i _{ \tilde X }\tilde \al , i _{ \tilde Y }\tilde \al\rangle_{\tilde g}
+\frac{1}{f ^4 }  \langle i _{ \tilde X} \tilde \be ,  i _{ \tilde Y} \tilde \be\rangle_{\tilde g}  \|  \nu   \|^2  _{ g }
+\frac{1}{f ^2 } \langle i _{\tilde X} \tilde \gamma ,  i _{ \tilde Y} \tilde \gamma\rangle_{\tilde g} \| \delta    \|^2  _{  g }
+ \tilde \varpi (\tilde X ) \tilde \varpi (\tilde Y )  \| \eps \| ^2  _g \,.
\end{equation} 
Since $h|_{\wi{M}}=f^{2} \, \tilde{g}$,  by  Proposition \ref{wraON}  and relations (\ref{f2norm}), (\ref{einsteqq}) and (\ref{hixftil}) one concludes that

\begin{prop}\label{EinVV}
For any $\tilde X, \tilde Y \in \mathfrak{X}  (\wi M)$ the supergravity Einstein equation (\ref{einsteqq}) reduces to 
\begin{eqnarray*}
\Ric^{\tilde g}(\tilde X , \tilde Y) &=&f^{2}\tilde{g} (\tilde X , \tilde Y) \hat{f}+  \frac{f^{2}}{6}\Big\{\frac{1}{f ^8}\Big(\|\tilde\al\|^{2}_{ \tilde g }\tilde{g}(\tilde X, \tilde Y)-3\langle i_{\tilde X}\tilde{\al}, i_{\tilde Y}\tilde{\al}\rangle_{\tilde g}\Big)\\
&+&\frac{\| \nu  \| ^2 _{g}}{f ^6}\Big(\|\tilde\be  \| ^2  _{\tilde g}\tilde{g}(\tilde X, \tilde Y)-3\langle i_{\tilde X}\tilde{\be}, i_{\tilde Y}\tilde{\be}\rangle_{\tilde g}\Big)+  \frac{\| \tilde \delta  \| ^2 _{ g }}{f^{4}}\Big( \| \tilde{\gamma}   \| ^2  _{\tilde g}\tilde{g}(\tilde X, \tilde Y)-3\langle i_{\tilde{X}}\tilde\gamma, i_{\tilde Y}\tilde\gamma\rangle_{\tilde g}\Big)\\
&+&\frac{\|\eps   \| ^2 _{g}}{f^{2}}\Big(\| \tilde \varpi \| ^2  _{\tilde g}\tilde{g}(\tilde X, \tilde Y)-3 \tilde \varpi(\tilde X) \tilde \varpi(\tilde Y)\Big)+\|\theta\|^{2}_{g}\tilde{g}(\tilde{X}, \tilde{Y})\Big\}\,.
\end{eqnarray*}
\end{prop}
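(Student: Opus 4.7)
The plan is to obtain the VV-part by specializing the supergravity Einstein equation (\ref{einsteqq}) to pairs of vertical vector fields $\tilde X,\tilde Y\in\mathfrak{X}(\wi M)$ and then substituting all the ingredients that have already been prepared in the excerpt: the Ricci tensor of a warped product from Proposition \ref{wraON}, the expression (\ref{hixftil}) for $\langle i_{\tilde X}\mathsf{F},i_{\tilde Y}\mathsf{F}\rangle_h$, and the square-norm formula (\ref{f2norm}) for $\|\mathsf{F}\|^2_h$. The final step is purely algebraic regrouping.

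First I would invoke Proposition \ref{wraON} to write
\[
\Ric^{h}(\tilde X,\tilde Y)=\Ric^{\tilde g}(\tilde X,\tilde Y)-h(\tilde X,\tilde Y)\hat f,
\]
and use $h|_{\wi M}=f^{2}\tilde g$ to replace $h(\tilde X,\tilde Y)$ by $f^{2}\tilde g(\tilde X,\tilde Y)$ everywhere it appears. Plugging this into (\ref{einsteqq}) and moving the $\hat f$-term to the right-hand side yields
\[
\Ric^{\tilde g}(\tilde X,\tilde Y)=f^{2}\tilde g(\tilde X,\tilde Y)\,\hat f-\tfrac{1}{2}\langle i_{\tilde X}\mathsf{F},i_{\tilde Y}\mathsf{F}\rangle_h+\tfrac{f^{2}}{6}\tilde g(\tilde X,\tilde Y)\,\|\mathsf{F}\|^2_h .
\]

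Next I would justify (\ref{hixftil}). Contracting the decomposition (\ref{4formdeco}) against a vertical field $\tilde X$ kills the purely horizontal summand $\theta$ and reduces the mixed summands via the Koszul rule (since $\nu,\delta,\eps$ annihilate $\tilde X$), giving
\[
i_{\tilde X}\mathsf{F}=i_{\tilde X}\tilde\al+i_{\tilde X}\tilde\be\wedge\nu+i_{\tilde X}\tilde\gamma\wedge\delta+\tilde\varpi(\tilde X)\,\eps.
\]
These four summands are homogeneous of pairwise distinct bidegree with respect to the $\wi M\oplus M$ splitting, so Lemma \ref{stvarious} (applied by polarization separately on each factor) forces the cross terms in $\langle i_{\tilde X}\mathsf{F},i_{\tilde Y}\mathsf{F}\rangle_h$ to vanish and supplies the correct warping powers $f^{-2\tilde k}$ on the diagonal ones, producing exactly (\ref{hixftil}).

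The final step is bookkeeping: substitute (\ref{f2norm}) for $\|\mathsf{F}\|^2_h$ and (\ref{hixftil}) for the inner product, then collect terms by form-type. For each of the five contributions $\tilde\al$, $\tilde\be\wedge\nu$, $\tilde\gamma\wedge\delta$, $\tilde\varpi\wedge\eps$, $\theta$ one checks the cancellation of warping powers: e.g.\ for $\tilde\al$ the norm piece contributes $\tfrac{f^{2}}{6}\cdot f^{-8}\|\tilde\al\|^2_{\tilde g}\tilde g(\tilde X,\tilde Y)$ and the contraction piece contributes $-\tfrac{1}{2}\cdot f^{-6}\langle i_{\tilde X}\tilde\al,i_{\tilde Y}\tilde\al\rangle_{\tilde g}=-\tfrac{3}{6}f^{-6}\langle i_{\tilde X}\tilde\al,i_{\tilde Y}\tilde\al\rangle_{\tilde g}$, which combine into $\frac{f^{2}}{6}\cdot f^{-8}\bigl(\|\tilde\al\|^2_{\tilde g}\tilde g(\tilde X,\tilde Y)-3\langle i_{\tilde X}\tilde\al,i_{\tilde Y}\tilde\al\rangle_{\tilde g}\bigr)$. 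The analogous pattern holds for the $\tilde\be,\tilde\gamma,\tilde\varpi$ summands, while $\theta$ contributes only through the norm (no inner-product term, since $i_{\tilde X}\theta=0$). Assembling gives the claimed formula. The main (and only) technical obstacle is the careful tracking of the warping powers and the factor $\tfrac{1}{2}$ versus $\tfrac{3}{6}$; no new geometric input beyond Proposition \ref{wraON}, Lemma \ref{stvarious}, and the identity $h|_{\wi M}=f^{2}\tilde g$ is needed.
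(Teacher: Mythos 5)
Your proposal is correct and follows essentially the same route as the paper: restrict (\ref{einsteqq}) to vertical fields, substitute the warped-product Ricci formula of Proposition \ref{wraON} together with $h|_{\wi M}=f^{2}\tilde g$, use (\ref{f2norm}) and (\ref{hixftil}) (the latter justified exactly as you do, via the vanishing of cross terms between summands of distinct bidegree and the $f^{-2\tilde k}$ scaling from Lemma \ref{stvarious}), and regroup the warping powers. Your bookkeeping of the $\tfrac12=\tfrac36$ factor is the same computation the paper leaves implicit.
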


\subsection{Special cases}\label{specialsugra}
Let us now present the  supergravity Einstein  equation  for the special cases indicated   in Proposition  \ref{spck}.
All the conclusions are based on Propositions \ref{EinHH} and \ref{EinVV},  and also on the constraints appearing (if any) of the  HV-part of the supergravity  Einstein equation.

\smallskip
\noindent	{\bf Case (\ref{spcasealpha})} Assume that $\mathsf{F} =\tilde \al $,  for some non-trivial 4-form $\tilde\al\in\Omega^{4}(\wi M)$.
\begin{corol}\label{spcasealphaa}
When the 4-form $\mathsf{F}$ is given by $\mathsf{F} =\tilde \al $, then the supergravity Einstein equation (\ref{einsteqq}) is equivalent to the following system of equations
\begin{eqnarray*}
\Ric^{g}(X , Y ) &=& \frac{6}{f} H ^f ( X , Y )+ \frac{\|\tilde\al\|^{2}_{ \tilde g }}{6f ^8}g(X, Y)\,,\\
\Ric^{\tilde g}(\tilde X , \tilde Y) &=&f^{2}\tilde{g} (\tilde X , \tilde Y) \hat{f}+ \frac{1}{6f^{6}}\Big(\|\tilde\al\|^{2}_{ \tilde g }\tilde{g}(\tilde X, \tilde Y)-3\langle i_{\tilde X}\tilde{\al}, i_{\tilde Y}\tilde{\al}\rangle_{\tilde g}\Big)\,,
\end{eqnarray*}
for any $X, Y \in \mathfrak{X}  (M) $ and $\tilde X, \tilde Y \in \mathfrak{X}  (\wi M)$.
\end{corol}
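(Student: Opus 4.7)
The plan is to derive Corollary \ref{spcasealphaa} as a direct specialization of the three component parts of the supergravity Einstein equation analyzed in Section \ref{SUGRAE}: the HV-part identity \eqref{useHV}, the HH-part Proposition \ref{EinHH}, and the VV-part Proposition \ref{EinVV}. Since $\mathsf{F} = \tilde\al$ corresponds to setting $\tilde\be = \tilde\gamma = \tilde\varpi = 0$ and $\nu = \delta = \eps = \theta = 0$ in the general expression \eqref{4formdeco}, the argument reduces to carefully evaluating each of the three parts at these zero substitutions and collecting the surviving terms.

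First I would treat the mixed (HV) block. Substituting the vanishing components into \eqref{useHV}, every term on both sides contains one of the factors $\tilde\be$, $\tilde\gamma$, $\tilde\varpi$, $\nu$, $\delta$, $\eps$ or $\theta$, so the identity holds automatically for all $X \in \mathfrak{X}(M)$ and $\tilde Z \in \mathfrak{X}(\wi M)$; this is precisely the observation recorded in Remark \ref{nicerem}. Consequently, the supergravity Einstein equation imposes no constraint from the cross-term $\Ric^{h}(X, \tilde Z)$, and one only needs to handle the horizontal-horizontal and vertical-vertical restrictions.

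Next I would specialize Proposition \ref{EinHH}. Setting $\tilde\be = \tilde\gamma = \tilde\varpi = 0$ and $\theta = 0$ kills every term on the right hand side except the global $\|\tilde\al\|^{2}_{\tilde g}/f^{8}$ contribution (which appears as the conformal trace of $\|\mathsf{F}\|^{2}_{h}$ through \eqref{f2norm}) and the Hessian term coming from the Ricci tensor of the warped product. This immediately yields the first displayed equation of the corollary. Similarly, specializing Proposition \ref{EinVV} kills all contributions involving $\tilde\be, \tilde\gamma, \tilde\varpi$ or $\theta$, leaving only the Laplacian-type term $f^{2}\tilde g(\tilde X, \tilde Y)\hat{f}$ from the Ricci tensor of the warped product and the two terms arising from $\tilde\al$ itself, namely the $\|\tilde\al\|^{2}_{\tilde g}\tilde g(\tilde X, \tilde Y)$ trace piece and the $-3\langle i_{\tilde X}\tilde\al, i_{\tilde Y}\tilde\al\rangle_{\tilde g}$ piece. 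The prefactor $f^{2}/6 \cdot 1/f^{8} = 1/(6 f^{6})$ matches the expression in the statement.

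There is no genuine obstacle here: the result is a clean bookkeeping exercise in which the only subtle point is to verify that the HV component of \eqref{einsteqq} degenerates identically, so that the HH and VV specializations together constitute a complete set of necessary and sufficient conditions. I would finish by noting that Propositions \ref{EinHH}, \ref{EinVV} and the vanishing of the HV block were established without any simplifying assumption on $\mathsf{F}$, so both directions of the equivalence follow at once.
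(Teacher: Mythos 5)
Your proposal is correct and follows essentially the same route as the paper, which derives this corollary by specializing Propositions \ref{EinHH} and \ref{EinVV} to $\mathsf{F}=\tilde\al$ and noting (as in Remark \ref{nicerem}) that the HV-part is trivially satisfied. The bookkeeping of the surviving terms and the prefactor $f^{2}/(6f^{8})=1/(6f^{6})$ is exactly as in the text.
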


\smallskip
\noindent	{\bf Case (\ref{spcasebeta})}  Assume that $\mathsf{F}=\tilde{\be}\wedge\nu$ for some non-trivial  3-form $\tilde\be\in\Omega^{3}(\wi{M})$ and non-trivial  1-form $\nu\in\Omega^{1}(M)$.  
\begin{corol} \label{spcasebetaa} 
When the 4- form $\mathsf{F}$ is given by $\mathsf{F}=\tilde{\be}\wedge\nu$, then the supergravity Einstein equation (\ref{einsteqq})  is equivalent to the following system of equations
\begin{eqnarray*}
\Ric^{g}(X , Y ) &=& \frac{6}{f} H ^f ( X , Y )+\frac{\| \tilde \be  \| ^2 _{ \tilde g}}{6f ^6}\Big(\|\nu  \| ^2  _{g}g(X, Y)-3\nu(X)\nu(Y)\Big)\,,\\
\Ric^{\tilde g}(\tilde X , \tilde Y) &=&f^{2}\tilde{g} (\tilde X , \tilde Y) \hat{f}+ \frac{\| \nu  \| ^2 _{g}}{6f ^4}\Big( \|\tilde\be  \| ^2 _{\tilde g}\tilde{g}(\tilde X, \tilde Y)-3\langle i_{\tilde X}\tilde{\be}, i_{\tilde Y}\tilde{\be}\rangle_{\tilde{g}}\Big)\,,
\end{eqnarray*}
for any $X, Y \in \mathfrak{X}  (M) $ and   $\tilde X, \tilde Y \in \mathfrak{X}  (\wi M)$.
\end{corol}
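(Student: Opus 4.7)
The plan is to specialize the three general statements (the HV-part identity \eqref{useHV}, Proposition \ref{EinHH}, and Proposition \ref{EinVV}) to the flux ansatz $\mathsf{F}=\tilde\be\wedge\nu$. In the notation of the decomposition \eqref{4formdeco}, this means setting $\tilde\al=0$, $\tilde\gamma=0$, $\tilde\varpi=0$, $\delta=0$, $\eps=0$, $\theta=0$, while keeping $\tilde\be$ and $\nu$ non-trivial.

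First I would handle the HV-part. Substituting the vanishing forms into \eqref{useHV}, every single term on both sides carries at least one of $\tilde\al,\tilde\gamma,\tilde\varpi,\delta,\eps,\theta$ as a factor, so the identity is satisfied identically for all $X\in\mathfrak{X}(M)$ and $\tilde Z\in\mathfrak{X}(\wi M)$. This confirms the observation already made in Remark \ref{nicerem}, and shows that the mixed block of the Einstein equation imposes no constraint.

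Next I would specialize Proposition \ref{EinHH}. With only the $\tilde\be\wedge\nu$ piece surviving, all the terms proportional to $\|\tilde\al\|^{2}_{\tilde g}/f^{8}$, $\|\tilde\gamma\|^{2}_{\tilde g}\|\delta\|^{2}_{g}/f^{4}$, $\|\tilde\varpi\|^{2}_{\tilde g}\|\eps\|^{2}_{g}/f^{2}$ and $\|\theta\|^{2}_{g}$ drop, as do the tensorial contributions $\langle i_{X}\delta,i_{Y}\delta\rangle_{g}$, $\langle i_{X}\eps,i_{Y}\eps\rangle_{g}$ and $\langle i_{X}\theta,i_{Y}\theta\rangle_{g}$. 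Only the block
\[
\frac{\|\tilde\be\|^{2}_{\tilde g}}{6f^{6}}\Bigl(\|\nu\|^{2}_{g}\,g(X,Y)-3\nu(X)\nu(Y)\Bigr)
\]
survives next to the Hessian term $\tfrac{6}{f}H^{f}(X,Y)$, giving the first equation of the corollary. In parallel, specializing Proposition \ref{EinVV} the surviving contribution comes only from the block with $\|\tilde\be\|^{2}_{\tilde g}\|\nu\|^{2}_{g}/f^{6}$ and the interior product term $\langle i_{\tilde X}\tilde\be,i_{\tilde Y}\tilde\be\rangle_{\tilde g}$; the overall factor $f^{2}/6$ combines with $1/f^{6}$ to yield $1/(6f^{4})$, producing the second equation of the corollary.

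This is mostly a bookkeeping exercise with no genuine obstruction, since both Propositions \ref{EinHH} and \ref{EinVV} are already phrased in a form whose nonzero terms are indexed by the summands of \eqref{4formdeco}. The only point that requires a moment of care is the HV-part: I would state it explicitly, rather than silently discarding it, so that the reader sees why the two equations of the corollary actually constitute the full content of \eqref{einsteqq} for $\mathsf{F}=\tilde\be\wedge\nu$.
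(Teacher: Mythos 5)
Your proposal is correct and follows exactly the paper's own route: the corollary is obtained by specializing Propositions \ref{EinHH} and \ref{EinVV} to the ansatz $\mathsf{F}=\tilde\be\wedge\nu$, with the HV-part being trivially satisfied as already noted in Remark \ref{nicerem}. Your explicit treatment of the HV-part is a reasonable expository addition but does not change the argument.
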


\smallskip
\noindent	{\bf Case (\ref{spcasegamma})}  Assume that $\mathsf{F}=\tilde{\gamma}\wedge\delta$ for some non-trivial  $\tilde\gamma\in\Omega^{2}(\wi{M})$ and $\delta\in\Omega^{2}(M)$.  
\begin{corol}
When the 4-form $\mathsf{F}$ is given by $\mathsf{F}=\tilde{\gamma}\wedge\delta$,  then the supergravity Einstein equation (\ref{einsteqq}) is equivalent to the following system of equations
\begin{eqnarray*}
\Ric^{g}(X , Y ) &=& \frac{6}{f} H ^f ( X , Y )+ \frac{\| \tilde \gamma   \| ^2 _{ \tilde g }}{6f^{4}}\Big( \| \delta   \| ^2  _{g}g(X, Y)-3\langle i_{X}\delta, i_{Y}\delta\rangle_{g}\Big)\,, \\
\Ric^{\tilde g}(\tilde X , \tilde Y) &=&f^{2}\tilde{g} (\tilde X , \tilde Y) \hat{f}+ \frac{\| \tilde \delta  \| ^2 _{ g }}{6f^{2}}\Big( \| \tilde{\gamma}\| ^2  _{\tilde g}\tilde{g}(\tilde X, \tilde Y)-3\langle i_{\tilde{X}}\tilde\gamma, i_{\tilde Y}\tilde\gamma\rangle_{\tilde g}\Big)\,,
\end{eqnarray*}
for any $X, Y \in \mathfrak{X}  (M) $ and   $\tilde X, \tilde Y \in \mathfrak{X}  (\wi M)$.
\end{corol}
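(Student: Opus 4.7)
The proof is a direct specialization of the two master formulas Proposition \ref{EinHH} and Proposition \ref{EinVV}, together with the observation that the HV-part imposes no condition in this case. My plan is therefore to verify these three components in turn.

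First, I would address the HV-part, i.e.\ equation (\ref{useHV}), and show that it is trivially satisfied. When $\mathsf{F} = \tilde\gamma \wedge \delta$ we set $\tilde\al = \tilde\be = \tilde\varpi = 0$ and $\nu = \eps = \theta = 0$. Inspecting the four monomials of (\ref{useHV}), each of them contains at least one factor among $\tilde\al, \tilde\be, \tilde\varpi, \nu, \eps, \theta$. Hence every term vanishes identically, so the mixed component $\Ric^h(X, \tilde Z) = 0$ of the Einstein equation is automatic and produces no constraint. This is precisely the statement of Remark \ref{nicerem} for the special cases (\ref{spcasealpha})--(\ref{spcasetheta}).

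Next I would derive the HH-part by substituting into the formula of Proposition \ref{EinHH}. Among the five square-bracketed contributions, only the $\tilde\gamma$-term survives, since all other coefficients $\|\tilde\al\|^{2}_{\tilde g}$, $\|\tilde\be\|^{2}_{\tilde g}\|\nu\|^{2}_{g}$, $\|\tilde\varpi\|^{2}_{\tilde g}\|\eps\|^{2}_{g}$ and $\|\theta\|^{2}_{g}$ as well as the interior-product terms $\langle i_X\eps, i_Y\eps\rangle_g$ and $\langle i_X\theta, i_Y\theta\rangle_g$ are zero. What remains is exactly
\[
\Ric^{g}(X,Y) = \frac{6}{f} H^{f}(X,Y) + \frac{\|\tilde\gamma\|^{2}_{\tilde g}}{6 f^{4}}\bigl(\|\delta\|^{2}_{g}\, g(X,Y) - 3\langle i_X\delta, i_Y\delta\rangle_g\bigr),
\]
which is the first asserted identity.

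An entirely parallel specialization of Proposition \ref{EinVV} yields the VV-part. Setting the same components of $\mathsf{F}$ to zero kills the $\tilde\al$-, $\tilde\be$-, $\tilde\varpi$- and $\theta$-contributions, and leaves only the term proportional to $\|\delta\|^{2}_{g}\|\tilde\gamma\|^{2}_{\tilde g}\tilde g(\tilde X, \tilde Y)$ and $\langle i_{\tilde X}\tilde\gamma, i_{\tilde Y}\tilde\gamma\rangle_{\tilde g}$. This gives the second asserted equation. There is no real obstacle here; the only check worth writing out is that no hidden cross-term appears from the HV-sector, and this is precisely what the first step guarantees. Thus the Einstein system (\ref{einsteqq}) is equivalent to the two reduced equations, completing the proof.
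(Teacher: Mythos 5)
Your proposal is correct and follows exactly the route the paper intends: the corollary is obtained by specializing Propositions \ref{EinHH} and \ref{EinVV} to $\tilde\al=\tilde\be=\tilde\varpi=\nu=\eps=\theta=0$, after noting (as in Remark \ref{nicerem}) that the HV-part (\ref{useHV}) is trivially satisfied since every monomial contains a vanishing factor. The only cosmetic remark is that the symbol $\tilde\delta$ appearing in the second displayed equation of the corollary is a typographical slip for $\delta\in\Omega^{2}(M)$, which your specialization of Proposition \ref{EinVV} correctly identifies.
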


\smallskip
\noindent	{\bf Case (\ref{spcasevarpi}) }  Assume that $\mathsf{F} =\tilde \varpi  \wedge \eps$ for some non-trivial  $\tilde \varpi \in\Omega^{1}(\wi{M})$ and $\eps\in\Omega^{3}(M)$.  
\begin{corol}\label{spcasevarpii}
When the 4-form $\mathsf{F}$ is given by $\mathsf{F} =\tilde \varpi  \wedge \eps$, then the supergravity Einstein equation (\ref{einsteqq}) is equivalent to the following system of equations
\begin{eqnarray*}
\Ric^{g}(X , Y ) &=& \frac{6}{f} H ^f ( X , Y )+\frac{\| \tilde \varpi   \| ^2 _{ \tilde g}}{6f^{2}}\Big(\|\eps   \| ^2  _{g}g(X, Y)-3\langle i_{X}\eps, i_{Y}\eps\rangle_{g}\Big)\,,\\\
\Ric^{\tilde g}(\tilde X , \tilde Y) &=&f^{2}\tilde{g} (\tilde X , \tilde Y) \hat{f}+ \frac{\|\eps   \| ^2 _{g}}{6}\Big(\| \tilde \varpi \| ^2  _{\tilde g}\tilde{g}(\tilde X, \tilde Y)-3 \tilde \varpi(\tilde X) \tilde \varpi(\tilde Y)\Big)\,,
\end{eqnarray*}
for any $X, Y \in \mathfrak{X}  (M) $ and  $\tilde X, \tilde Y \in \mathfrak{X}  (\wi M)$.
\end{corol}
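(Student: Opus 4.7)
The proof is a direct specialization of the general formulas already established, so the plan is to reduce the supergravity Einstein equation to its three constituent pieces---the HV-, HH-, and VV-parts---and evaluate each at the flux form $\mathsf{F} = \tilde\varpi \wedge \eps$, i.e.\ with $\tilde\al = \tilde\be = \tilde\gamma = \theta = 0$ and $\nu = \delta = 0$.

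First I would dispose of the HV-part. Substituting the above vanishing data into identity (\ref{useHV}), every summand carries at least one factor which is zero (each term involves either $\tilde\al$, $\tilde\be$, $\delta$, $\nu$, or $\theta$), so the mixed component of the Einstein equation is automatically satisfied. This is consistent with the observation in Remark \ref{nicerem} that for each of the single-block flux forms (\ref{spcasealpha})--(\ref{spcasetheta}), the HV-constraint is vacuous, hence in particular for case (\ref{spcasevarpi}).

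For the HH-part, I would substitute the same vanishing data into the formula of Proposition \ref{EinHH}. All terms proportional to $\|\tilde\al\|^2_{\tilde g}$, $\|\tilde\be\|^2_{\tilde g}$, $\|\tilde\gamma\|^2_{\tilde g}$, $\|\theta\|^2_g$, as well as the contractions $\nu(X)\nu(Y)$, $\langle i_X\delta, i_Y\delta\rangle_g$, and $\langle i_X\theta, i_Y\theta\rangle_g$, drop out; the only surviving contribution on the right-hand side is the summand with prefactor $\|\tilde\varpi\|^2_{\tilde g}/f^2$, which yields the first displayed equation of the corollary. Analogously, I would apply Proposition \ref{EinVV} with the same reductions; now only the block with coefficient $\|\eps\|^2_g/f^2$ persists, and after taking into account the overall $f^2$ prefactor it produces precisely the stated expression for $\Ric^{\tilde g}(\tilde X, \tilde Y)$.

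The only potential obstacle is bookkeeping: one must carefully track the powers of $f$ appearing in $\|\mathsf{F}\|^2_h$ via formula (\ref{f2norm}) and in the contraction identities (\ref{hixf}) and (\ref{hixftil}), so as to confirm that the surviving prefactors agree with those displayed in the corollary. Beyond this mechanical verification, no new computation is required, since the full Einstein equation has already been split into its HV-, HH-, and VV-components by Section \ref{HVpart}, Proposition \ref{EinHH}, and Proposition \ref{EinVV} respectively.
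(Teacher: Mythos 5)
Your proposal is correct and follows exactly the route the paper takes: the corollary is obtained by specializing Propositions \ref{EinHH} and \ref{EinVV} (together with the vacuous HV-constraint noted in Remark \ref{nicerem}) to the flux form $\mathsf{F}=\tilde\varpi\wedge\eps$, and your bookkeeping of the surviving $\|\tilde\varpi\|^2_{\tilde g}/f^2$ and $\|\eps\|^2_g/f^2$ blocks, including the overall $f^2$ prefactor in the VV-part, matches the stated formulas.
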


\smallskip
\noindent	{\bf Case (\ref{spcasetheta})}  Assume that $\mathsf{F}=\theta$ for some non-trivial  $\theta\in\Omega^{4}(M)$.   
\begin{corol}\label{spcasethetaa}
When the 4-form $\mathsf{F}$ is given by $\mathsf{F}=\theta$, then  the supergravity Einstein equation (\ref{einsteqq}) is equivalent to the following system of equations
\begin{eqnarray*}
\Ric^{g}(X , Y ) &=& \frac{6}{f} H ^f ( X , Y )+\frac{1}{6}\Big(\|\theta\|^{2}_{g}g(X, Y)-3\langle i_{X}\theta, i_{Y}\theta\rangle_{g}\Big)\,,\\
\Ric^{\tilde g}(\tilde X , \tilde Y) &=&f^{2}\tilde{g} (\tilde X , \tilde Y) \hat{f}+\frac{f^{2}}{6} \|\theta\|^{2}_{g}\tilde{g}(\tilde{X}, \tilde{Y})=\Big(f^{2}f^{\sharp}+\frac{f^{2}}{6}\|\theta\|^{2}_{g}\Big)\tilde{g}(\tilde{X}, \tilde{Y})\,,
\end{eqnarray*}
for any $X, Y \in \mathfrak{X}  (M) $ and  $\tilde X, \tilde Y \in \mathfrak{X}  (\wi M)$.
\end{corol}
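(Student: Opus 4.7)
The plan is to derive this corollary as a direct specialization of the two propositions that describe the HH- and VV-parts of the supergravity Einstein equation, together with the observation (Remark~\ref{nicerem}) that the HV-part is automatically satisfied in this case. Since $\mathsf{F}=\theta$ means that in the decomposition (\ref{4formdeco}) we have $\tilde\al=\tilde\be=\tilde\gamma=\tilde\varpi=0$ and simultaneously $\nu=\delta=\eps=0$, all mixed terms disappear and only the $\theta$-summands survive.

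First, I would substitute these vanishing forms into Proposition~\ref{EinHH}. Every term involving $\|\tilde\al\|^{2}_{\tilde g}$, $\|\tilde\be\|^{2}_{\tilde g}$, $\|\tilde\gamma\|^{2}_{\tilde g}$, $\|\tilde\varpi\|^{2}_{\tilde g}$ vanishes, leaving only the Hessian contribution $\tfrac{6}{f}H^{f}(X,Y)$ and the genuinely $M$-part $\tfrac{1}{6}(\|\theta\|^{2}_{g}g(X,Y)-3\langle i_{X}\theta,i_{Y}\theta\rangle_{g})$. This reproduces the first equation of the corollary.

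Next, I would substitute the same vanishing forms into Proposition~\ref{EinVV}. Here all four contractions $\langle i_{\tilde X}\tilde\al,i_{\tilde Y}\tilde\al\rangle_{\tilde g}$ etc.\ vanish, as do the norms $\|\nu\|^{2}_{g}$, $\|\delta\|^{2}_{g}$, $\|\eps\|^{2}_{g}$. What remains is the conformal-type term $f^{2}\tilde g(\tilde X,\tilde Y)\hat f$ from the warped-product Ricci formula, together with the last summand $\tfrac{f^{2}}{6}\|\theta\|^{2}_{g}\tilde g(\tilde X,\tilde Y)$. Factoring $\tilde g(\tilde X,\tilde Y)$ yields the displayed right-hand side $\bigl(f^{2}\hat f+\tfrac{f^{2}}{6}\|\theta\|^{2}_{g}\bigr)\tilde g(\tilde X,\tilde Y)$ (reading $f^{\sharp}$ as $\hat f$).

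Finally, I would check the HV-part by appealing to (\ref{useHV}): with $\tilde\al=\tilde\be=\tilde\gamma=\tilde\varpi=0$ and $\nu=\delta=\eps=0$, every term on both sides of (\ref{useHV}) is identically zero, so no additional constraint is imposed — this is precisely what Remark~\ref{nicerem} records for the special cases (\ref{spcasealpha})--(\ref{spcasetheta}). The only ``obstacle'' here is purely cosmetic: verifying that the coefficient grouping in the VV-equation is consistent (in particular that no stray $\tilde g(\tilde X,\tilde Y)$ multiplicative term has been absorbed into $\hat f$), and confirming that the non-triviality of $\theta$ guarantees the Maxwell/closedness hypotheses of Proposition~\ref{spck}(5) are compatible with the Ricci identities above; neither point requires further computation.
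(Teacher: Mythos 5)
Your proposal is correct and follows exactly the route the paper itself takes: Section~\ref{specialsugra} states that all the special-case corollaries, including this one, are obtained by substituting the vanishing components of $\mathsf{F}$ into Propositions~\ref{EinHH} and~\ref{EinVV} and noting (Remark~\ref{nicerem}) that the HV-part imposes no constraint. Your reading of $f^{\sharp}$ as $\hat f$ in the displayed VV-equation is also the right one; that is a notational slip in the statement rather than an issue with your argument.
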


\smallskip
\noindent	{\bf Case (\ref{spcasesumleft})}   Assume now that $\mathsf{F}=\tilde{\al}+\tilde{\be}\wedge\nu$ for some $\tilde{\al}\in\Omega^{4}(\wi{M}), \tilde{\be}\in\Omega^{3}(\wi{M})$ and $\nu\in\Omega^{1}(M)$.  
\begin{corol}\label{spcasesumleftt}
 When the 4-form $\mathsf{F}$ is given by $\mathsf{F}=\tilde{\al}+\tilde{\be}\wedge\nu$, then the supergravity Einstein equation (\ref{einsteqq})  is equivalent to the following system of equations
\begin{eqnarray*}
\Ric^{g}(X , Y ) &=& \frac{6}{f} H ^f ( X , Y )+\frac{\|\tilde\al\|^{2}_{ \tilde g }}{6f ^8}g(X, Y)+\frac{\| \tilde \be  \| ^2 _{ \tilde g}}{6f ^6}\Big(\|\nu  \| ^2  _{g}g(X, Y)-3\nu(X)\nu(Y)\Big)\,,\\
\Ric^{\tilde g}(\tilde X , \tilde Y) &=&f^{2}\tilde{g} (\tilde X , \tilde Y) \hat{f}+  \frac{1}{6f^{6}}\Big(\|\tilde\al\|^{2}_{ \tilde g }\tilde{g}(\tilde X, \tilde Y)-3\langle i_{\tilde X}\tilde{\al}, i_{\tilde Y}\tilde{\al}\rangle_{\tilde g}\Big)+\frac{\| \nu  \| ^2 _{g}}{6f ^4}\Big( \|\tilde\be  \| ^2 _{\tilde g}\tilde{g}(\tilde X, \tilde Y)-3\langle i_{\tilde X}\tilde{\be}, i_{\tilde Y}\tilde{\be}\rangle_{\tilde g}\Big)\,, \\
\tilde g (i _{ \tilde X }\tilde \al, \tilde \be)&=&0\,,
  \end{eqnarray*}
for any $X, Y \in \mathfrak{X}  (M) $ and  $\tilde X, \tilde Y \in \mathfrak{X}  (\wi M)$.
\end{corol}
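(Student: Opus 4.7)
The plan is to obtain all three equations by specializing the previously established HH-, VV-, and HV-part formulas to the flux form $\mathsf{F} = \tilde\al + \tilde\be \wedge \nu$, i.e.\ by setting $\tilde\gamma = 0$, $\tilde\varpi = 0$, $\delta = 0$, $\eps = 0$ and $\theta = 0$ in the general expressions. Since Propositions \ref{EinHH}, \ref{EinVV} and the relation \eqref{useHV} are themselves equivalent reformulations of the supergravity Einstein equation \eqref{einsteqq} on the three invariant blocks of $T\mathsf{X}$, combining the three specializations will yield the required equivalence.

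For the first equation, I would substitute into the formula of Proposition \ref{EinHH}. All the summands containing $\tilde\gamma, \tilde\varpi, \delta, \eps$ or $\theta$ drop out, and the surviving $\tilde\al$- and $\tilde\be\wedge\nu$-contributions reproduce exactly the stated identity for $\Ric^{g}$. For the second equation I would do the analogous substitution in Proposition \ref{EinVV}: only the $\tilde\al$-term (in which no $M$-factor enters the norm) and the $\tilde\be$-term (paired with the $\|\nu\|_g^2$ factor) remain, giving the claimed expression for $\Ric^{\tilde g}$.

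For the third equation I would substitute into \eqref{useHV}. The three terms that carry one of $\tilde\gamma, \tilde\varpi, \delta, \eps, \theta$ are automatically zero, so \eqref{useHV} collapses to
\[
g(X,\nu)\,\tilde g(\tilde\be,\, i_{\tilde Z}\tilde\al) = 0 \quad \text{for all } X\in\mathfrak{X}(M),\ \tilde Z\in\mathfrak{X}(\wi M).
\]
Because $\nu$ is non-trivial, at any point $p\in M$ with $\nu_p\neq 0$ one can take $X=\nu^\sharp$ so that $g(X,\nu)\neq 0$, which forces $\tilde g(\tilde\be, i_{\tilde Z}\tilde\al) = 0$ pointwise on $\wi M$; since this condition only involves $\wi M$, it then holds everywhere on $\wi M$. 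This is exactly the third equation.

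The minor subtlety to watch is the last step: concluding the pointwise vanishing of $\tilde g(\tilde\be, i_{\tilde Z}\tilde\al)$ on all of $\wi M$ from an identity that a priori is only evaluated on the open set where $\nu\neq 0$. This is the only genuinely non-formal part of the argument; everything else is a direct bookkeeping exercise in zeroing out the irrelevant summands in Propositions \ref{EinHH}, \ref{EinVV} and \eqref{useHV}. The converse implication is immediate: the three specialized equations together recover \eqref{einsteqq} on each of the invariant $HH$, $VV$ and $HV$ pieces of $TX\otimes TX$, which exhausts the full bilinear form.
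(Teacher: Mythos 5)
Your proposal is correct and follows exactly the route the paper takes: the corollary is obtained by setting $\tilde\gamma=\tilde\varpi=\delta=\eps=\theta=0$ in Propositions \ref{EinHH} and \ref{EinVV} and adjoining the HV-constraint from \eqref{useHV} (cf.\ Remark \ref{nicerem}), with the equivalence following because the HH-, VV- and HV-blocks exhaust the Einstein equation. Your extra care in passing from $g(X,\nu)\,\tilde g(\tilde\be,i_{\tilde Z}\tilde\al)=0$ to $\tilde g(\tilde\be,i_{\tilde Z}\tilde\al)=0$ on all of $\wi M$ (by fixing a point of $M$ where $\nu\neq 0$ and letting the $\wi M$-point vary) is a valid and welcome precision that the paper leaves implicit.
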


\smallskip
\noindent	{\bf Case (\ref{spcasesumright})} Assume that $\mathsf{F} =\tilde \varpi \wedge \eps  + \theta  $ for some $\tilde\varpi\in\Omega^{1}(\wi{M})$, and $\eps\in\Omega^{3}(M), \theta\in\Omega^{4}(M)$. 
\begin{corol}\label{spcasesumright1}
When the 4-form $\mathsf{F}$ is given by $\mathsf{F} =\tilde \varpi \wedge \eps  + \theta  $, then the supergravity Einstein equation (\ref{einsteqq})  is equivalent to the following system of equations
\begin{eqnarray*}
\Ric^{g}(X , Y ) &=& \frac{6}{f} H ^f ( X , Y )+\frac{\| \tilde \varpi   \| ^2 _{ \tilde g}}{6f^{2}}\Big(\|\eps   \| ^2  _{g}g(X, Y)-3\langle i_{X}\eps, i_{Y}\eps\rangle_{g}\Big)+\frac{1}{6}\Big(\|\theta\|^{2}_{g}g(X, Y)-3\langle i_{X}\theta, i_{Y}\theta\rangle_{g}\Big)\,,\\
\Ric^{\tilde g}(\tilde X , \tilde Y) &=&f^{2}\tilde{g} (\tilde X , \tilde Y)\hat{f}+ \frac{\|\eps\| ^2_{g}}{6}\Big(\| \tilde \varpi \| ^2 _{\tilde g}\tilde{g}(\tilde X, \tilde Y)-3 \tilde \varpi(\tilde X) \tilde \varpi(\tilde Y)\Big)+\frac{f^{2}}{6}\|\theta\|^{2}_{g}\tilde{g}(\tilde{X}, \tilde{Y})\,,\\
g (i _{ X }\theta , \eps  )&=&0\,,
\end{eqnarray*}
for any $X, Y \in \mathfrak{X}  (M) $ and   $\tilde X, \tilde Y \in \mathfrak{X}  (\wi M)$.
\end{corol}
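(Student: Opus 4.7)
The plan is to obtain the three equations of Corollary \ref{spcasesumright1} as direct specializations of the three general projections of the supergravity Einstein equation established in Section \ref{SUGRAE}, by switching off the components $\tilde\al$, $\tilde\be$, $\tilde\gamma$, $\nu$, $\delta$ of the decomposition (\ref{4formdeco}) and retaining only $\tilde\varpi$, $\eps$, $\theta$.

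First, for the HH-part, I would appeal to Proposition \ref{EinHH}. Substituting $\tilde\al = \tilde\be = \tilde\gamma = 0$ and $\nu = \delta = 0$ makes the norm identity (\ref{f2norm}) collapse to $\| \mathsf{F} \|^2_h = f^{-2}\|\tilde\varpi\|^2_{\tilde g}\|\eps\|^2_g + \|\theta\|^2_g$, and the inner product formula (\ref{hixf}) retains only the $\tilde\varpi\wedge\eps$ and $\theta$ summands. Plugging these into the statement of Proposition \ref{EinHH} yields the first equation of the corollary after a straightforward rearrangement. The VV-part follows similarly by specializing Proposition \ref{EinVV}: here (\ref{hixftil}) reduces to $\langle i_{\tilde X}\mathsf{F}, i_{\tilde Y}\mathsf{F}\rangle_h = f^{-2}\|\eps\|^2_g\,\tilde\varpi(\tilde X)\tilde\varpi(\tilde Y)$, and the $\theta$-contribution appears only through $\|\mathsf{F}\|^2_h$. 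Assembling gives the second stated equation.

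The third constraint $g(i_X\theta, \eps) = 0$ originates from the HV-part (\ref{useHV}). Under our specialization, the three terms in (\ref{useHV}) containing $\nu$, $\delta$, $\tilde\be$ or $\tilde\gamma$ as a factor vanish identically, and we are left with
\[
0 = \tilde g(\tilde Z, \tilde\varpi)\, g(i_X\theta, \eps)
\]
for every $X \in \mathfrak{X}(M)$ and $\tilde Z \in \mathfrak{X}(\wi M)$. Since Case (\ref{spcasesumright}) presumes $\tilde\varpi$ to be a genuine, non-trivial 1-form (otherwise we fall back to Case (\ref{spcasetheta})), the non-degeneracy of $\tilde g$ produces, at each point where $\tilde\varpi \neq 0$, a vector field $\tilde Z$ with $\tilde g(\tilde Z, \tilde\varpi) \neq 0$, forcing $g(i_X\theta, \eps) = 0$ pointwise, and hence on all of $M$ by extending the identity across the (closed, typically empty) zero set of $\tilde\varpi$.

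The main subtlety is not any single computation --- those are bookkeeping with Propositions \ref{EinHH}, \ref{EinVV} and equation (\ref{useHV}) --- but rather confirming that the three projections HH, VV, HV jointly recover the full Einstein equation (\ref{einsteqq}) and that no further algebraic constraints are hidden. This is clear because (\ref{einsteqq}) is tensorial and splits block-diagonally with respect to the orthogonal decomposition $T\mathsf{X} = TM \oplus T\wi M$ (note $\Ric^h(X, \tilde Y) = 0$ by Proposition \ref{wraON}), so testing against pairs $(X,Y)$, $(\tilde X,\tilde Y)$ and $(X, \tilde Y)$ is exhaustive. Hence the corollary reduces to assembling these three specializations, and the non-triviality of $\tilde\varpi$ --- already guaranteed by the presence of the non-zero constant $\con$ in Proposition \ref{spck}(7) --- is the only case distinction needed.
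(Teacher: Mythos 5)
Your proposal follows exactly the route the paper intends: Section \ref{specialsugra} states that all these corollaries are obtained by specializing Propositions \ref{EinHH} and \ref{EinVV} together with the HV-constraint, and the paper's Remark \ref{nicerem} already records the identity $g(i_X\theta,\eps)=0$ for this flux form via precisely the non-degeneracy argument you give (which is in fact even simpler than you suggest, since $g(i_X\theta,\eps)$ is a function on $M$ while $\tilde\varpi$ lives on $\wi M$, so a single point of $\wi M$ where $\tilde\varpi\neq 0$ suffices). One small slip: your claimed reduction of (\ref{hixftil}) carries a spurious factor $f^{-2}$ on the term $\|\eps\|^2_g\,\tilde\varpi(\tilde X)\tilde\varpi(\tilde Y)$; in (\ref{hixftil}) that last summand has no power of $f$ (because $\eps$ lives on $M$ where $h|_M=g$, and $\tilde\varpi(\tilde X)$ is metric-independent), and only with the correct coefficient does the assembly reproduce the stated $\frac{\|\eps\|^2_g}{6}\bigl(\|\tilde\varpi\|^2_{\tilde g}\tilde g(\tilde X,\tilde Y)-3\tilde\varpi(\tilde X)\tilde\varpi(\tilde Y)\bigr)$ term in the warped ($f\neq 1$) case.
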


\section{Examples  of  supergravity backgrounds}
In this section we present  solutions of the bosonic supergravity equations on the   Lorentzian manifold $(\mathsf{X}^{1, 10} = M ^5 \times _f \wi M ^{1, 5}, h=g+f^{2}\tilde{g})$ for particular choices of the flux form $\mathsf{F}$, where in general $\mathsf{F}$ is given by (\ref{4formdeco}). It turns out that   a suitable Ansatz in order to simplify the whole problem and provide  decomposable   supergravity backgrounds, is related to  direct products $(f=1)$ and   null forms. In this way we provide some new supergravity backgrounds, which most of them are illustrated in terms of  Ricci-isotropic Walker metrics and in particular pp-waves.   However, we conjecture that  different explicit  bosonic solutions may exist, in terms for example of more general  Walker metrics or warped metrics, and  the underlying constraints for these directions must be established with respect to  the   equations presented in Proposition \ref{spck} and Section \ref{specialsugra}.  
For the Riemannian part, and at least for the direct product construction and based on null forms for  the Lorentzian part,  it turns out that the supergravity Einstein equation  implies  the corresponding Ricci flatness.  Thus, our solutions depend also on  {Ricci-flat manifolds}.

\subsection{Ricci-isotropic Walker manifolds and pp-waves}
For our constructions,  it is useful  to  refresh  basic properties of Lorentzian Walker manifolds (see also \cite{Gib, Gal, Gal1, Leis}).

Consider a simply-connected  Lorentzian manifold  $(\mathsf{X}^{1, n+1}, h)$   and identify the tangent space  $T_{x}\mathsf{X}^{1, n+1}$ with the Minkowski space $\bb{R}^{1, n+1}$. Since $\mathsf{X}^{1, n+1}$ is simply-connected, the holonomy group ${\rm Hol}_{x}(h)$ is determined by the holonomy algebra $\fr{g}\subseteq\fr{so}(1, n+1)$.   $\mathsf{X}$ is {\it locally indecomposable} if $\fr{g}$   does not preserve any     proper, non-degenerate subspace of $T_{x}\mathsf{X}^{1, n+1}$  (\cite{Wu}), i.e. $\fr{g}$ is a so-called {\it weakly irreducible} holonomy algebra . In this case, $\mathsf{X}$ is not locally the product of some pseudo-Riemannian manifolds.    If $\fr{g}$ is not equal to $\fr{so}(1, n+1)$, i.e. $\fr{g}$ is weak irreducible but not irreducible, then $\fr{g}$ preserves a degenerate subspace $\mathsf{U}\subset\bb{R}^{1, n+1}$ and also the  isotropic line $\ell:=\mathsf{U}\cap\mathsf{U}^{\perp}\subset\bb{R}^{1, n+1}$. It follows that  $\mathsf{X}$ admits a parallel distribution of isotropic lines.  It is well-known that  such manifolds admit local coordinates 
$(v, x^1, \ldots, x^n, u)$  such that
\[
h=2{\rm d}v{\rm d}u+\rho+2A{\rm d}u+H({\rm d}u)^{2}\,,
\]
where $\rho=\rho_{ij}(x^1, \ldots, x^{n}, u){\rm d}x^{i}{\rm d}x^{j}$ is a family of Riemannian metrics, $A=A_{i}(x^{1}, \ldots, x^{n}, u){\rm d}x^{i}$ is  1-form  independent of $v$ and $H=H(v, x^{1}, \ldots, x^{n}, u)$ is a local function on $\mathsf{X}$. Such local coordinates  are referred to as  {\it Walker coordinates}, due to \cite{Wal} and the ambient space  $\mathsf{X}$  can be assumed to be locally diffeomorphic  to $\bb{R}\times N\times\bb{R}$, for some family of  Riemannian manifolds $(N^n, \rho)$. The parallel distribution of isotropic lines $\ell$ is defined by the  vector field $\partial_{v}:=\partial/\partial v$.

Equivalently,  such Lorentzian manifolds are   characterized  by the fact that their  holonomy algebra $\fr{g}$ is contained in the maximal subalgebra $\fr{sim}(n)\subset\fr{so}(1, n+1)$  of $\fr{so}(1, n+1)$ preserving $\bb{R}\ell$, i.e.
\[
\fr{g}\subset\fr{sim}(n)=\left\{\begin{pmatrix} \mathsf{a} & X^{t} & 0 \\
0 & A & -X \\
0 & 0 & -\mathsf{a} \end{pmatrix} : \mathsf{a}\in\mathbb{R},  X\in\bb{R}^n, A\in\fr{so}(n)\right\}\simeq (\bb{R}\oplus\fr{so}(n))\ltimes\bb{R}^n\,.
\]
 Next we shall focus on Lorentzian Walker manifolds satisfying the following assumptions:
  \begin{itemize}
  \item[$\mathsf{(1)}$] $\partial_{v}H=0$ (and hence $\nabla^{h}\partial_{v}=0$); 
  \item[$\mathsf{(2)}$] $A_{i}(x^{1}, \ldots, x^{n}, u)=0$ for any $i$;
  \item[$\mathsf{(3)}$] $\rho$ is a family of Ricci-flat  Riemannian metrics. 
\end{itemize}
 When  $\partial_{v}H=0$, then one can show that  $\fr{g}=\fr{h}\ltimes\bb{R}^n$, where $\fr{h}$ is the holomomy algebra of  some Riemannian metric, see \cite{Gal}. 
  Note also that  the 1-form ${\rm d}u$ is  parallel and null, $h({\rm d}u, {\rm d}u)=0$, with $h({\rm d}u, {\rm d}v)=1$.

 Although the  assumptions  $\mathsf{(1)-(3)}$  do not necessarily imply that $(\mathsf{X}, h)$ is  Einstein or Ricci-flat, by   \cite[p.~11]{LorEin} it follows that that the only non-zero value of the
  Ricci tensor $\Ric^{h}$  is given by
\begin{equation}\label{riccianton}
\Ric^{h}_{uu}=-\frac{1}{2}\Delta H\,,
\end{equation}
where $\Delta
H=\sum_{i,j=1}^4\rho^{ij}(\partial_i\partial_jH-\sum_{k=1}^4\Gamma_{ij}^k\partial_kH)$
is the Laplace-Beltrami operator  of the metric $\rho$ applied to $H$.  Consequently, the Ricci tensor of $(\mathsf{X}, h)$ inherits the remarkable  property of {\it nullness}, in particular $\Ric^{h}$  must be  {\it totally isotropic}. Recall that 
\begin{definition}
A Lorentzian manifold $(\mathsf{X}, h)$ is called {\it totally Ricci-isotropic} if the Ricci endomorphism ${\rm ric}^{h}  : T\mathsf{X}\to T\mathsf{X}$ corresponding to $\Ric^{h}$ satisfies the relation
\[
h(\ric^{h}(X), \ric^{h}(Y))=0\,,
\]
for any $X, Y\in\fr{X}(\mathsf{X})$.
\end{definition}

 Lorentzian manifolds   admitting a parallel distribution of isotropic lines, which are totally Ricci-isotropic, share the same holonomy algebras with Ricci-flat Lorentzian manifolds \cite{Gal}.  Moreover, if  $(\mathsf{X}, h)$ is a spin Lorentzian manifold admitting a parallel spinor, then $h$ is totally Ricci isotropic (but not necessarily Ricci-flat, as in the Riemannian case). By some abuse of notation,  next we shall refer to Walker metrics satisfying the conditions $\mathsf{(1)-(3)}$  by the term   {\it Ricci-isotropic Walker metrics}, referring to the property that the image of the Ricci endomorphism  related to the Walker metric is totally null.  
 
 \begin{example}
 A special class of  Ricci-isotropic Walker manifolds are the so-called pp-waves, which satisfy the conditions  $\rho=\sum_{i=1}^{n}({\rm d}x^{i})^2$, $A=0$ and $\partial_{v}H=0$. 
Thus, in this case $\rho$ is a flat metric and the ambient space  $\mathsf{X}=\bb{R}\times\bb{R}^n\times\bb{R}\simeq\bb{R}^{n+2}$  is endowed with a Walker metric  of the form  $h=2{\rm d}v{\rm d}u+\sum_{i=1}^{n}({\rm d}x^{i})^{2}+H(x^1, \ldots, x^{n}, u)({\rm d}u)^{2}$, where now $H$ is $v$-independent.   In other words,  pp-waves are precisely the Walker manifolds whose holonomy algebra is commutative, $\fr{g}\subset\bb{R}^n\subset\fr{sim}(n)$.    Of course,  pp-waves are {totally Ricci-isotropic} (and hence with zero scalar curvature), and by (\ref{riccianton}) it follows that  the Ricci-flat condition  is equivalent to the condition  $\Delta H=0$. An important example of
geodesically complete pp-waves  are the Lorentzian symmetric spaces with solvable
transvection group, the so-called {\it Cahen-Wallach spaces} (due to  \cite{CW}).  In this case the function $H$ is given by $H(x^{1}, \ldots, x^{n-2}, u)=\sum_{i, j}\mc{A}_{ij}x^{i}x^{j}$ 
for some constant symmetric matrix $\mc{A}_{ij}$. Such metrics are indecomposable,  if and only if $\mc{A}$ is non-degenerate. 
 \end{example}

Below we will show that  the new  bosonic supergravity backgrounds  are established in terms of   some  six-dimensional  Ricci-isotropic Walker manifold and some five-dimensional Ricci-flat Riemannian manifold, i.e.
\begin{itemize}
\item   $\wi{M}^{1, 5}=\bb{R}\times N^{4}\times\bb{R}$, endowed with the Ricci-isotropic Walker  metric 
\begin{equation}\label{ppwave}
\tilde g=2{\rm d}v{\rm  d}u+\rho+H({\rm d}u)^2\,,
\end{equation}
 where $(N^4, \rho)$ is  a  four-dimensional  Ricci-flat Riemannian manifold and $H=H(x^1,\dots,x^4, u)$.
\item $(M^5, g)$ is a Ricci-flat Riemannian manifold (usually a product of a four-dimensional Ricci-flat K\"ahler  manifold $(\mc{P}^{4}, \mathsf{p}, J)$ with $\bb{R}$).
\end{itemize}
In particular, we analyse  how  such manifolds can be adapted to the  fixed special flux form induced by $\mathsf{F}$, in order to produce (via a methodological approach) explicit solutions of the eleven-dimensional supergravity equations of motion, which are based on our Ansatz. 
At this point, we should mention the following observation: 
By  \cite{Fig3} it is known that (decomposable in our terms)  eleven-dimensional {\it symmetric} supergravity backgrounds  are written as $\mathsf{X}^{1, 10}=\wi{M}^{1, d-1}\times M^{11-d}$,
where $\wi{M}^{1, d-1}$ is a $d$-dimensional indecomposable  Lorentzian symmetric space, and $M^{11-d}$ is some $(11-d)$-dimensional irreducible Riemannian symmetric space (or a product of irreducible symmetric spaces).  For this reason, and since we  use non-symmetric Ricci-isotropic Walker metrics on $\wi{M}^{1, 5}$ and   Ricci-flat Riemannian metrics on $M^5$,  it follows that the given supergravity backgrounds are {\it not}  symmetric.


\subsection{Results related with the flux form $\mathsf{F}=\tilde\al$}\label{subs1}
Assume that the flux form $\mathsf{F}$ is given by  (\ref{spcasealpha}), i.e.  $\mathsf{F}:=\tilde\al$, 
 for some non-trivial  4-form $\tilde\al$ on $\wi{M}^{1, 5}$. When $\tilde\al$ is null and for the unwarped metric ($f=1$), by Proposition \ref{spck} and Corollary \ref{spcasealphaa} we obtain the following 
\begin{theorem}\label{thmnew1}
Consider a five-dimensional Riemannian Ricci flat manifold $(M^{5}, g)$ and a six -dimensional Lorentzian manifold $(\wi{M}^{1, 5}, \tilde{g})$ endowed with a null 4-form $\tilde\al$ satisfying $\mathrm{d} \tilde \al = \mathrm{d} \tilde * \tilde \al =0$ and  
\[
\Ric^{\tilde g}(\tilde X , \tilde Y)=-\frac{1}{2}\langle i_{\tilde X}\tilde{\al}, i_{\tilde Y}\tilde{\al}\rangle_{\tilde g}\,, 
\]
 for any $\tilde X, \tilde Y \in \mathfrak{X}  (\wi M)$.  Then, the triple $(\mathsf{X}^{1, 10}=M^{5}\times \wi{M}^{1, 5}, \ h=g+\tilde g, \ \mathsf{F}=\tilde\al)$ is an eleven-dimensional  bosonic supergravity background. 
\end{theorem}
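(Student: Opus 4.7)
The plan is to verify directly the three defining conditions of a bosonic supergravity background for the triple $(\mathsf{X}^{1,10}, h, \mathsf{F})$, exploiting that the product is untwisted ($f=1$) and the flux $\tilde\al$ is null. Since $f\equiv 1$, the Hessian term $H^f$ and the auxiliary function $\hat f$ both vanish, so the statements of Propositions \ref{wraON}, \ref{EinHH}, \ref{EinVV} collapse to direct-product versions. The hypothesis that $\tilde\al$ is null  should be read as $\|\tilde\al\|^{2}_{\tilde g}=\langle \tilde\al, \tilde\al\rangle_{\tilde g}=0$, which will kill the symmetric scalar terms appearing on the right-hand side of the Einstein-type equations in Corollary \ref{spcasealphaa}.

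First I would treat the closedness condition and Maxwell equation simultaneously. Since $\mathsf{F}=\tilde\al$ is precisely the special case (\ref{spcasealpha}), Proposition \ref{spck}(1) reduces the pair $\mathrm{d}\mathsf{F}=0$ and $\mathrm{d}\star\mathsf{F}=\tfrac{1}{2}\mathsf{F}\wedge\mathsf{F}$ to $\mathrm{d}\tilde\al=\mathrm{d}\tilde *\tilde\al=0$, which holds by assumption. Notice that for $\mathsf{F}=\tilde\al$ the wedge $\mathsf{F}\wedge\mathsf{F}$ lives in $\Lambda^{8}L$, hence vanishes on the six-dimensional factor, which is consistent with this reduction.

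Next I would verify the supergravity Einstein equation via its three components. The HV-part is trivial by Remark \ref{nicerem}, since $\mathsf{F}=\tilde\al$ falls under the cases (\ref{spcasealpha})--(\ref{spcasetheta}) for which (\ref{useHV}) is automatic. For the HH-part, specializing Corollary \ref{spcasealphaa} to $f=1$ yields
\[
\Ric^{g}(X,Y)=\frac{\|\tilde\al\|^{2}_{\tilde g}}{6}\,g(X,Y),
\]
whose right-hand side vanishes because $\tilde\al$ is null; this matches the hypothesis $\Ric^{g}=0$. For the VV-part, the same corollary at $f=1$ gives
\[
\Ric^{\tilde g}(\tilde X,\tilde Y)=\frac{1}{6}\Big(\|\tilde\al\|^{2}_{\tilde g}\,\tilde g(\tilde X,\tilde Y)-3\langle i_{\tilde X}\tilde\al,i_{\tilde Y}\tilde\al\rangle_{\tilde g}\Big),
\]
which, upon using $\|\tilde\al\|^{2}_{\tilde g}=0$, becomes exactly the prescribed relation $\Ric^{\tilde g}(\tilde X,\tilde Y)=-\tfrac{1}{2}\langle i_{\tilde X}\tilde\al,i_{\tilde Y}\tilde\al\rangle_{\tilde g}$.

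There is no real obstacle here: the argument is a direct bookkeeping of the earlier propositions once one observes that nullness of $\tilde\al$ annihilates precisely the ``wrong'' scalar terms and that the direct-product hypothesis removes the warping contributions. The only point that deserves attention is the reading of the word \emph{null} as $\|\tilde\al\|^{2}_{\tilde g}=0$ (rather than, say, $\tilde\al\wedge\tilde\al=0$), which is needed to match the right-hand sides of Corollary \ref{spcasealphaa} with the hypothesized Ricci expression and with Ricci-flatness of $(M^{5},g)$.
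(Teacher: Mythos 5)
Your proposal is correct and follows essentially the same route as the paper, which obtains Theorem \ref{thmnew1} precisely by combining Proposition \ref{spck}(1) with Corollary \ref{spcasealphaa} specialized to $f=1$ and $\|\tilde\al\|^{2}_{\tilde g}=0$. The bookkeeping you describe (vanishing of $H^{f}$ and $\hat f$, nullness killing the scalar terms, triviality of the HV-part) is exactly the intended argument.
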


Let us describe some supergravity backgrounds related to Thereom \ref{thmnew1}.
\begin{corol}\label{sol1} 
Let $(\wi{M}^{1, 5}=\bb{R}\times N^{4}\times\bb{R}, \tilde{g}=2{\rm d}v{\rm  d}u+\rho+H({\rm d}u)^2)$ be a six-dimensional  Ricci-isotropic Walker manifold.
  Let $\theta\in\Omega^3(N)$ be a closed and co-closed 3-form on  $N^4$.   Then the  direct product of $\wi{M}^{1, 5}$ with a  five-dimensional Ricci-flat Riemannian manifold $(M^{5}, g)$,  induces solutions of the bosonic supergravity equations for the flux form   $\mathsf{F}=\tilde\al:={\rm d}u \wedge \theta$, if and only if  
 \begin{equation}\label{hessss}
  \Delta H=\|\theta\|^{2}_{\rho}\,.
  \end{equation}
\end{corol}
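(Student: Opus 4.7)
The plan is to apply Theorem \ref{thmnew1} with the choice $\tilde\al := du\wedge\theta$, verifying its three hypotheses in turn: (a) nullity of $\tilde\al$; (b) the closedness conditions $\mathrm{d}\tilde\al = \mathrm{d}\tilde*\tilde\al = 0$ of Proposition \ref{spck}(1); and (c) the VV-component of the Einstein equation, $\Ric^{\tilde g}(\tilde X,\tilde Y) = -\tfrac{1}{2}\langle i_{\tilde X}\tilde\al, i_{\tilde Y}\tilde\al\rangle_{\tilde g}$. Conditions (a) and (b) are essentially kinematic, while (c) is where the scalar equation $\Delta H = \|\theta\|^2_\rho$ should emerge.

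For (a), since $du$ is parallel and null on any Walker metric ($\tilde g^{uu}=0$), raising the single $u$-index of $\tilde\al$ via $\tilde g^{uv}=1$ converts it to a $v$-index; but $\tilde\al$ carries no $v$-component, so $\|\tilde\al\|^2_{\tilde g}=0$. For (b), closedness is immediate: $\mathrm{d}(du\wedge\theta) = -du\wedge\mathrm{d}\theta = 0$. Co-closedness is more delicate, and I would first observe that both $\sqrt{|\det\tilde g|}$ and the raised components of $\tilde\al$ are independent of $H$ (the function $H$ enters $\tilde g^{-1}$ only through $\tilde g^{vv}=-H$, which does not couple to any index of $\tilde\al$). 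Hence $\tilde * \tilde \al$ may be computed using the direct-product metric $\tilde g_0 = 2\,\mathrm{d}v\,\mathrm{d}u + \rho$ on $\bb{R}^{1,1}\times N^4$; a factorisation analogue of Lemma \ref{stvarious} then yields $\tilde*(du\wedge\theta) = \pm\,du\wedge *_N\theta$, where $*_N$ is the fibrewise Hodge star on $(N^4,\rho)$. Differentiating and using $du\wedge du = 0$ to annihilate any $\partial_u$-derivatives of $*_N\theta$ coming from a possibly $u$-dependent $\rho$, condition (b) reduces to $\mathrm{d}_N(*_N\theta)=0$, which is precisely the co-closedness of $\theta$.

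For (c), Corollary \ref{spcasealphaa} at $f=1$ further splits the Einstein equation into an HH-part $\Ric^g = \tfrac{1}{6}\|\tilde\al\|^2_{\tilde g}\, g$ and a VV-part. The HH-part is automatic since $(M^5,g)$ is Ricci-flat and $\|\tilde\al\|^2_{\tilde g}=0$. For the VV-part, the Ricci-isotropic Walker hypothesis (with $\rho$ Ricci-flat, $\partial_v H=0$, $A=0$) together with formula (\ref{riccianton}) yields $\Ric^{\tilde g}_{uu} = -\tfrac{1}{2}\Delta H$ as the unique nonzero Ricci component. The null structure of $\tilde\al$ collapses the stress-energy tensor similarly: $i_{\partial_v}\tilde\al=0$, while for a spatial vector $\partial_{x^i}$ one has $i_{\partial_{x^i}}\tilde\al = -du\wedge i_{\partial_{x^i}}\theta$, again of type ``null $1$-form $\wedge$ spatial form''. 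By the determinant formula for inner products of decomposable forms, any pairing of such forms contains a row of entries $\langle du, du\rangle_{\tilde g}=0$ and $\langle du, \mathrm{d}x^i\rangle_{\tilde g}=0$, hence vanishes; the only surviving contraction is $i_{\partial_u}\tilde\al = \theta$, yielding $\langle i_{\partial_u}\tilde\al, i_{\partial_u}\tilde\al\rangle_{\tilde g} = \|\theta\|^2_\rho$.

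Matching the $(u,u)$ components, the Ricci equation then reduces to $-\tfrac{1}{2}\Delta H = -\tfrac{1}{2}\|\theta\|^2_\rho$, i.e.\ $\Delta H = \|\theta\|^2_\rho$, as required. The main subtlety is the null-frame bookkeeping that forces every off-diagonal stress-energy component to vanish; this is what makes the null Ansatz $\tilde\al = du\wedge\theta$ so natural, reducing the supergravity equations on the Walker fibre to a single scalar Poisson-type equation.
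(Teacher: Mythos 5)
Your proposal is correct and follows essentially the same route as the paper's proof: invoke Theorem \ref{thmnew1}, verify nullity and (co)closedness via $\tilde*(\mathrm{d}u\wedge\theta)=\mathrm{d}u\wedge *_\rho\theta$, and reduce the supergravity Einstein equation to the single $(u,u)$ component via (\ref{riccianton}) to obtain $\Delta H=\|\theta\|^2_\rho$. Your extra care about the $H$-independence of the Hodge star and the possible $u$-dependence of $\rho$ only fills in details the paper leaves implicit.
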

\begin{proof}
Set $\tilde\al:={\rm d}u \wedge \theta$, 
 where
$\theta$ is a 3-form on $N^4$.    Since the 1-form
${\rm d}u$ is null, $\tilde{\al}$ is a null 4-form on $\wi{M}^{1, 5}$. 
Moreover, $\tilde\al$ satisfies  the relation   
\[
\tilde{\ast}\tilde{\al} =  {\rm d}u\wedge\ast_{\rho}\theta\,,
\] 
where  $\ast_\rho$ denotes the Hodge operator on $(N^{4}, \rho)$.  Consequently, assuming that  $\theta$ is
  closed and co-closed, we obtain the first desired condition of Theorem \ref{thmnew1}, i.e. $\mathrm{d}\tilde\al=0= \mathrm{d} \tilde * \tilde \al$.  
 By Theorem \ref{thmnew1} we  also know that  the supergravity Einstein equation is given by     $\Ric^{\tilde g}(\tilde X , \tilde Y) =-\frac{1}{2}\langle i_{\tilde X}\tilde{\al}, i_{\tilde Y}\tilde{\al}\rangle_{\tilde g}$, and according to  (\ref{riccianton}) the  Ricci tensor  $\Ric^{\tilde g}(\tilde X , \tilde Y)$ is non-zero only in the direction of $\tilde X=\partial/\partial u$. In particular, we compute
      \[
  \Ric^{\tilde g}(\tilde X , \tilde X) =-\frac{1}{2}\langle i_{\tilde X}\tilde{\al}, i_{\tilde X}\tilde{\al}\rangle_{\tilde g}=-\frac{1}{2\cdot 3!}\tilde {g}(i_{\tilde X}\tilde{\al}, i_{\tilde X}\tilde{\al})=-\frac{1}{12}\rho(\theta, \theta)=-\frac{1}{2}\|\theta\|^{2}_{\rho}\,,
  \]
and hence, a comparison      with   (\ref{riccianton}) gives the presented condition, i.e. $\Delta H=\frac{1}{6}\rho(\theta,\theta)=\langle\theta, \theta\rangle_{\rho}=\|\theta\|^{2}_{\rho}$.   In the direction of  some other vector field $ \mathfrak{X}  (\wi M)\ni\tilde X\neq \partial/\partial u$,  the  supergravity Einstein equation reduces to the constraint $\|i_{\tilde X}\tilde\al\|^2_{\tilde g}=0$, which is satisfied since  $i_{\tilde X}\tilde\al=-{\rm d}u\wedge i_{\tilde X}\theta$, i.e. $i_{\tilde X}\tilde\al$ is  a null form.  
  \end{proof}
  \begin{example}
  For an explicit example of Corollary \ref{sol1}, set $N=\bb{R}^4$. We need to specify a closed and co-closed 3-form $\theta\in\Omega^{3}(\bb{R}^4)$ and   a smooth function $H$ satisfying (\ref{hessss}). Set  $ \theta={\rm d}x^{i}\wedge{\rm d}x^{j}\wedge{\rm d}x^{k}$,
    for some $1\leq i<j<k\leq 4$, which is  closed and co-closed.  
     Then we get solutions  of (\ref{hessss})  for 
$H=\frac{1}{8}\big((x^{1})^{2}+\cdots+(x^{4})^{2}\big)$. Note that  $H$ depends on all the variables $\{x^{1}, \ldots x^{4}\}$, thus the used metric $\tilde g$  is indecomposable.
  \end{example}
   
 \begin{remark}\label{similar}
 The list of maximally supersymmetric backgrounds of 11d-supergravity includes the flat Minkowski space, the products $\Ad S_4\times \Ss^7$, $\Ad S_7\times\Ss^4$ and a pp-wave solution, which was discovered by J.~Kowalski-Glikman \cite{Kow} (see also \cite{Hul}).  For this solution  the related bosonic fields are given by the eleven-dimensional pp-wave metric  $h=2{\rm d}v{\rm d}u+\rho+H(x^i, u)({\rm d}u)^2$,    
 where $H$ depends on $u$ and the local coordinates  $\{x^i : i=1, \ldots, 9\}$ of $\bb{R}^9$, and the flux form  which has the form
 $\mathsf{F}={\rm d}u\wedge\theta$, where $\theta$ is a closed and co-closed 3-form on $\bb{R}^9$, satisfying  $ \Delta H=-\|\theta\|^2$. 	Thus, in the context  of  bosonic supergravity backgrounds, our Corollary \ref{sol1}  provides an analogue of this construction, expressed  however in terns of more general six-dimensional Ricci-isotropic Walker manifolds and five-dimensional Ricci-flat manifolds.  
   \end{remark}

 
 \subsection{Results related with the flux form $\mathsf{F} =\tilde \be \wedge \nu$.}\label{subs2}
Let us assume that  the flux 4-form is given by  (\ref{spcasebeta}), i.e. 
\[
\mathsf{F} =\tilde \be \wedge \nu\,, \quad \tilde\be\in\Omega^{3}(\wi{M})\,, \quad \nu\in\Omega^{1}(M)\,.
\]
When $\tilde\be$ is null and   $h$ is the product metric ($f=1$), by Proposition \ref{spck} and Corollary \ref{spcasebetaa}  we result with the following
\begin{theorem}\label{newthem2}
Let $(M^{5}, g)$ be a  five-dimensional Riemannian Ricci flat manifold   and $(\wi{M}^{1, 5}, \tilde{g})$  be a six-dimensional Lorentzian manifold. Assume that $\nu$  is a non-trivial 1-form on $M^{5}$ of unit length, $\|\nu\|_{g}^{2}=-1$, and   $\tilde\be$  is a non-trivial null 3-form on $\wi{M}^{1, 5}$, satisfying the following equations
\begin{eqnarray*}
\mathrm{d}  \nu   &=& \mathrm{d}  * \nu   =0, \quad \mathrm{d} \tilde \be  = \mathrm{d} \tilde * \tilde \be  =0, \\
\Ric^{\tilde g}(\tilde X , \tilde Y) &=&\frac{1}{2}\langle i_{\tilde X}\tilde{\be}, i_{\tilde Y}\tilde{\be}\rangle_{\tilde g}, \quad \forall \ \tilde X, \tilde Y \in \mathfrak{X}  (\wi M).
\end{eqnarray*}
Then the product manifold $(\mathsf{X}^{1, 10}=M\times\wi{M}, h=g+\tilde g, \mathsf{F}=\tilde \be \wedge \nu)$ induces a bosonic supergravity background. 
\end{theorem}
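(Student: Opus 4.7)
The plan is to verify the three defining conditions of a bosonic supergravity background — namely closedness of $\mathsf{F}$, the Maxwell equation, and the supergravity Einstein equation — by specializing the general formulas already established in Proposition \ref{spck} and Corollaries \ref{spcasebetaa} (together with the observation in Remark \ref{nicerem} concerning the HV-part). The whole proof should amount to recognizing that nullness of $\tilde\be$ kills the $\|\tilde\be\|^2_{\tilde g}$-terms in the supergravity Einstein equation, reducing the HH-part to Ricci-flatness and cleaning up the VV-part into the assumed identity.

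More precisely, I would first invoke Proposition \ref{spck}(2): since $\mathsf{F} = \tilde\be \wedge \nu$ is precisely of the type (\ref{spcasebeta}), the closedness condition $\mathrm{d}\mathsf{F}=0$ and the Maxwell equation $\mathrm{d}\star\mathsf{F} = \tfrac{1}{2}\mathsf{F}\wedge\mathsf{F}$ are equivalent to the four assumed identities $\mathrm{d}\tilde\be = \mathrm{d}\tilde*\tilde\be = 0$ and $\mathrm{d}\nu = \mathrm{d} *\nu = 0$. Next, since case (\ref{spcasebeta}) lies in the range (\ref{spcasealpha})–(\ref{spcasetheta}) for which Remark \ref{nicerem} asserts the HV-part of the supergravity Einstein equation to be trivially satisfied, I do not need to impose anything further on the mixed $(X,\tilde Z)$-components.

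It then remains to verify the HH- and VV-parts through Corollary \ref{spcasebetaa}, specialized to the direct product case $f\equiv 1$ (so $H^f\equiv 0$ and $\hat f \equiv 0$). The crucial input is that $\tilde\be$ is null, hence $\|\tilde\be\|^2_{\tilde g} = 0$. The HH-part then reads
\[
\Ric^g(X,Y) \;=\; \frac{\|\tilde\be\|^2_{\tilde g}}{6}\bigl(\|\nu\|^2_g\, g(X,Y) - 3\nu(X)\nu(Y)\bigr) \;=\; 0,
\]
which holds because $(M^{5},g)$ is assumed Ricci-flat. For the VV-part, substituting $\|\tilde\be\|^2_{\tilde g}=0$ and the normalization $\|\nu\|^2_g = -1$ gives
\[
\Ric^{\tilde g}(\tilde X,\tilde Y) \;=\; \frac{\|\nu\|^2_g}{6}\bigl(0 - 3\langle i_{\tilde X}\tilde\be, i_{\tilde Y}\tilde\be\rangle_{\tilde g}\bigr) \;=\; \tfrac{1}{2}\langle i_{\tilde X}\tilde\be, i_{\tilde Y}\tilde\be\rangle_{\tilde g},
\]
which is precisely the assumed supergravity-type equation on $\wi M^{1,5}$.

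There is no genuine analytic obstacle here, since the hypotheses are tailored to dovetail with Corollary \ref{spcasebetaa}; the only point requiring care is sign-bookkeeping under the paper's \emph{negative-definite} Riemannian convention, which is what makes $\|\nu\|^2_g = -1$ produce the correct coefficient $+\tfrac{1}{2}$ in front of the stress-energy term. Once that is checked, assembling the three pieces delivers the supergravity background.
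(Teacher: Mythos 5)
Your proposal is correct and follows exactly the route the paper itself takes: the theorem is obtained by specializing Proposition \ref{spck}(2) and Corollary \ref{spcasebetaa} to $f\equiv 1$, using nullness of $\tilde\be$ to annihilate the $\|\tilde\be\|^2_{\tilde g}$-terms and $\|\nu\|^2_g=-1$ to produce the coefficient $\tfrac{1}{2}$, with the HV-part handled by Remark \ref{nicerem}. Your sign-bookkeeping under the negative-definite convention checks out, so nothing is missing.
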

To illustrate this statement  by examples we proceed as follows.
\begin{corol}\label{sol2}
Let $(\wi{M}^{1, 5}=\bb{R}\times N^{4}\times\bb{R}, \tilde{g}=2{\rm d}v{\rm  d}u+\rho+H({\rm d}u)^2)$ be a six-dimensional  Ricci-isotropic Walker manifold,
 and assume that $\omega\in\Omega^{2}(N^4)$ is a closed and co-closed 2-form on  $N^4$.  Let also $(M^{5}=\mc{P}^{4}\times\bb{R}, g)$ be a five-dimensional Riemannian manifold, where the factor  $(\mc{P}^4, \mathsf{p})$ is   Ricci-flat     and   $g=-\mathsf{p}-{\rm d}t^2$.
Then, the product $(\wi{M}^{1, 5}\times M^5, h=\tilde{g}+g, \mathsf{F}=\tilde \be \wedge\nu)$ is a bosonic supergravity background, with $\tilde\be:={\rm d}u \wedge \omega\in\Omega^{3}(\wi{M}^{1, 5})$ and $\nu:={\rm d}t\in\Omega^{1}(M^{5})$, if and only if  
\begin{equation}\label{hessss2}
\Delta H=-\|\omega\|^{2}_{\rho}\,.
\end{equation} 
\end{corol}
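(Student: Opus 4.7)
The plan is to invoke Theorem \ref{newthem2} by checking that, for the specific ingredients provided, every hypothesis is met and that the remaining scalar constraint reduces precisely to (\ref{hessss2}), giving both directions of the equivalence simultaneously.

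First I would handle the Riemannian side. Since $(\mc{P}^{4},\mathsf{p})$ is Ricci-flat and the line factor carries the flat metric $-{\rm d}t^{2}$, the product $(M^{5},g)$ is Ricci-flat Riemannian in the paper's sign convention. For $\nu = {\rm d}t$: because $g$ is negative definite, $\|\nu\|_{g}^{2} = -1$, fulfilling the unit-length assumption of Theorem \ref{newthem2}; moreover ${\rm d}\nu = 0$ is trivial, and $*\nu$ equals (up to sign) the parallel volume form $\vol_{\mc{P}^{4}}$ of the Ricci-flat Kähler factor, hence ${\rm d}*\nu = 0$.

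Second I would turn to the Lorentzian factor. The 1-form ${\rm d}u$ is null for $\tilde g$, so $\tilde\be = {\rm d}u\wedge\omega$ is automatically a null 3-form. Closedness ${\rm d}\tilde\be = -{\rm d}u\wedge{\rm d}\omega = 0$ follows from ${\rm d}\omega = 0$. For $\tilde *\tilde\be$, the standard computation in Walker coordinates (using that $\omega$ is pulled back from $N^{4}$, so that $i_{\partial_{v}}\omega = i_{\partial_{u}}\omega = 0$, and that the $(v,u)$-block of $\tilde g$ has determinant $-1$ independently of $H$) yields $\tilde*\tilde\be = \pm\, {\rm d}u\wedge *_{\rho}\omega$; the assumed co-closedness of $\omega$ on $N^{4}$ then forces ${\rm d}\tilde*\tilde\be = 0$. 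This is the main source of bookkeeping and the step where care is needed, but it is a routine extension of the computation already used in the proof of Corollary \ref{sol1}.

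Finally, the Ricci equation $\Ric^{\tilde g}(\tilde X,\tilde Y) = \tfrac{1}{2}\langle i_{\tilde X}\tilde\be, i_{\tilde Y}\tilde\be\rangle_{\tilde g}$ reduces, in complete parallel with Corollary \ref{sol1}, to a single scalar identity. By (\ref{riccianton}) the left-hand side vanishes on all pairs except $(\partial_{u},\partial_{u})$, where it equals $-\tfrac{1}{2}\Delta H$. On the right-hand side, $i_{\partial_{v}}\tilde\be = 0$ and, for $\tilde X$ tangent to $N^{4}$, $i_{\tilde X}\tilde\be = -{\rm d}u\wedge i_{\tilde X}\omega$ is null; the only surviving contribution comes from $i_{\partial_{u}}\tilde\be = \omega$, giving $\tfrac{1}{2}\|\omega\|_{\rho}^{2}$. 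Matching the two sides produces exactly (\ref{hessss2}), which is therefore both necessary and sufficient for the triple $(\mathsf{X},h,\mathsf{F})$ to be a bosonic supergravity background, and Theorem \ref{newthem2} delivers the conclusion.
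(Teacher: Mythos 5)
Your proposal is correct and follows essentially the same route as the paper's proof: verify the hypotheses of Theorem \ref{newthem2} using $\tilde*({\rm d}u\wedge\omega)={\rm d}u\wedge *_{\rho}\omega$, then reduce the Einstein equation via (\ref{riccianton}) to the single condition on $\Ric^{\tilde g}(\partial_u,\partial_u)$, yielding $\Delta H=-\|\omega\|^{2}_{\rho}$. Your explicit verification of the Riemannian-side conditions ($\|\nu\|_{g}^{2}=-1$, closedness and co-closedness of $\nu$) is a minor addition the paper leaves implicit in the hypotheses.
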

\begin{proof}
Consider the 3-form $\tilde\be:={\rm d}u \wedge \omega$, 
 where 
$\omega$ is a 2-form on $(N^{4}, \rho)$. Then $\tilde\be$ is a null 3-form on $\wi{M}^{1, 5}$  satisfying
\[
\tilde\ast({\rm d}u \wedge \omega) ={\rm d}u\wedge *_\rho\omega\,.
\] 
  Consequently, if  $\omega$ is closed and co-closed form on $N$, then the conditions given in  Theorem \ref{newthem2} for $\tilde\beta$   are satisfied. Now, by Theorem \ref{newthem2} and    (\ref{riccianton}) we deduce that the supergravity Einstein equation induces a non-trivial condition only in the direction of  $\tilde X=\partial/\partial u$, which gives   \[
  \Ric^{\tilde g}(\tilde X , \tilde X) =\frac{1}{2}\langle i_{\tilde X}\tilde{\be}, i_{\tilde Y}\tilde{\be}\rangle_{\tilde g}=\frac{1}{2\cdot 2!}\tilde{g}(i_{\tilde X}\tilde{\be}, i_{\tilde Y}\tilde{\be})=\frac{1}{4}\rho(\omega, \omega)=\frac{1}{2}\|\omega\|^{2}_{\rho}\,.
  \]
  Thus, a comparison with (\ref{riccianton})  induces   (\ref{hessss2}).  For some $\mathfrak{X}  (\wi M)\ni \tilde X\neq \partial/\partial u$ we obtain the condition $\|i_{\tilde X}\tilde\beta\|_{\tilde g}^2=0$ and this is trivially satisfied since  $i_{\tilde X}\tilde\beta=-{\rm d}u\wedge i_{\tilde X}\omega$, which is null.
\end{proof}
\begin{example}
To construct examples of Corollary \ref{sol2}, we may assyme that $N^{4}$ is a   Ricci-flat K\"ahler manifold and identify $\omega$  with  the K\"ahler
form (which is  closed and co-closed).   For instance, take $(N=\bb{R}^{4}, \rho, J)$ and let $\{e_1, \ldots, e_4\}$ be an orthonormal basis. The K\"ahler form is given by  $\omega={\rm d}x^{1}\wedge {\rm d}x^{2}+{\rm d}x^{3}\wedge dx^{4}$.  	Then we have  $\omega\wedge\omega=2\vol_{\rho}$, where $\vol_{\rho}$ denotes the volume element of $(N^{4}, \rho)$ and because $\omega=\ast_{\rho}\omega$, by the definition of the Hodge star operator we get 
\[
2\vol_{\rho}=\omega\wedge\omega=\omega\wedge\ast_{\rho}\omega=\langle\omega, \omega\rangle_{\rho}\vol_{\rho}=\|\omega\|^{2}_{\rho}\vol_{\rho}\,.
\]
Thus $\|\omega\|^{2}_{\rho}=2$ and  the relation (\ref{hessss2}) reduces to   $\Delta H=-2$, which is  satisfied for  $H=\frac{1}{4}((x^{1})^{2}+\cdots + (x^{4})^{2})$, for example.
\end{example}


\subsection{Results related with the flux form $\mathsf{F} = \tilde \varpi \wedge \eps $} \label{subs4} Assume now that $\mathsf{F}$ is given by (\ref{spcasevarpi}),
\[
\mathsf{F} = \tilde \varpi \wedge \eps\,,\quad \tilde \varpi\in\Omega^{1}(\wi{M})\,, \quad \eps\in\Omega^{3}(M)\,.
\]
   When $f=1$ and $\tilde\varpi$ is null, $\|\tilde\varpi\|^{2}_{\tilde g}=0$, a combination of Proposition \ref{spck} and Corollary \ref{spcasevarpii}  yields the following  
   \begin{theorem}\label{null13form}
Let $(M^{5}, g)$ be a  five-dimensional Riemannian Ricci flat manifold   and $(\wi{M}^{1, 5}, \tilde{g})$ be   a six-dimensional Lorentzian manifold.  Assume that $\eps$ is a 3-form on $M^{5}$ with constant length $\|\eps\|^{2}_{g}=-2$ and $\varpi$ a null 1-form on $\wi{M}$ satisfying the following conditions
\begin{eqnarray}
\mathrm{d} \tilde \varpi   &=& \mathrm{d} \tilde * \tilde \varpi   =0\,, \quad \mathrm{d} \eps     = \mathrm{d}   * \eps =0\,, \label{cclos}\\
\Ric^{\tilde g}(\tilde X , \tilde Y) &=&   \tilde \varpi(\tilde X) \cdot\tilde \varpi(\tilde Y)=(\tilde\varpi\otimes\tilde\varpi)(\tilde X,\tilde Y)\,, \quad \forall \   \ \tilde X\,, \tilde Y \in \mathfrak{X}  (\wi M)\,.  \label{ricpal}
\end{eqnarray}
Then the product manifold $(\mathsf{X}^{1, 10}=M\times\wi{M}, h=g+\tilde g)$ endowed with the flux 4-form $\mathsf{F}=\tilde \varpi \wedge \eps$ solves the bosonic supergravity equations. 
\end{theorem}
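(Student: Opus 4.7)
The plan is to deduce the theorem directly from the machinery already assembled, namely Proposition \ref{spck}(4), Remark \ref{nicerem} and Corollary \ref{spcasevarpii}, specialized to the direct product case $f\equiv 1$. Since $f$ is constant, the Hessian $H^f$ and the scalar $\hat f$ both vanish, which substantially simplifies every block of the supergravity Einstein equation. The proof thus reduces to a bookkeeping exercise spread over four independent conditions: closedness, the Maxwell equation, and the HV, HH and VV parts of the supergravity Einstein equation.

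First I would handle the closedness condition together with the Maxwell equation. By Proposition \ref{spck}(4) applied to $\mathsf{F}=\tilde\varpi\wedge\eps$, these two are jointly equivalent to
\[
d\tilde\varpi=d\tilde*\tilde\varpi=0,\qquad d\eps=d(f^{4}*\eps)=0.
\]
With $f\equiv 1$, the second pair becomes $d\eps=d*\eps=0$, so the assumptions (\ref{cclos}) are exactly what is needed. The HV-part is then handled by Remark \ref{nicerem}, which states that for the special case $\mathsf{F}=\tilde\varpi\wedge\eps$ the HV block of the Einstein equation is trivially satisfied, independently of the causal type of $\tilde\varpi$.

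The HH-part follows from Corollary \ref{spcasevarpii}. With $f\equiv 1$ one has $H^{f}=0$, and since $\tilde\varpi$ is null by hypothesis we have $\|\tilde\varpi\|^{2}_{\tilde g}=0$; the HH equation therefore collapses to $\Ric^{g}(X,Y)=0$, which is precisely the Ricci-flatness hypothesis on $(M^{5},g)$. The main step, such as it is, is the VV-part: substituting $\hat f=0$ and $\|\tilde\varpi\|^{2}_{\tilde g}=0$ into the VV equation of Corollary \ref{spcasevarpii} leaves
\[
\Ric^{\tilde g}(\tilde X,\tilde Y)=-\tfrac{1}{2}\,\|\eps\|^{2}_{g}\,\tilde\varpi(\tilde X)\tilde\varpi(\tilde Y),
\]
and the normalization $\|\eps\|^{2}_{g}=-2$ turns this into exactly (\ref{ricpal}).

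There is no serious obstacle here, as the heavy lifting was done in the preceding sections. The only delicate point is the coefficient $-2$ in the normalization of $\eps$: this specific value is forced by the paper's convention that Riemannian metrics are negative definite (so that $\|\eps\|^{2}_{g}<0$ for any non-trivial $\eps$), combined with the factor $-\tfrac{1}{2}\|\eps\|^{2}_{g}$ coming out of Corollary \ref{spcasevarpii}. One could equally well absorb an arbitrary non-zero constant by rescaling $\tilde\varpi$ and $\eps$, but the clean normalization $\|\eps\|^{2}_{g}=-2$ is what matches the right-hand side $\tilde\varpi\otimes\tilde\varpi$ appearing in (\ref{ricpal}).
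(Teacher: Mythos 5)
Your proposal is correct and follows exactly the route the paper intends: the paper states the theorem as an immediate combination of Proposition \ref{spck}(4) and Corollary \ref{spcasevarpii} under $f\equiv 1$ and $\|\tilde\varpi\|^{2}_{\tilde g}=0$, and your bookkeeping of the closedness/Maxwell conditions, the trivial HV block, the collapse of the HH block to Ricci-flatness of $(M^5,g)$, and the VV block yielding $\Ric^{\tilde g}=-\tfrac{1}{2}\|\eps\|^{2}_{g}\,\tilde\varpi\otimes\tilde\varpi$ (hence the normalization $\|\eps\|^{2}_{g}=-2$) reproduces that argument faithfully.
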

Observe that in this case the  supergravity Einstein equation asserts to the Ricci tensor of the Lorentzian manifold $(\mathsf{X}^{1, 10}=M^5\times \wi{M}^{1, 5}, h=g+\tilde g)$  the property of totally nullness. 
\begin{corol}
The bosonic supergravity background $(\mathsf{X}^{1, 10}=M^5\times \wi{M}^{1, 5}, h=g+\tilde g, \mathsf{F} = \tilde \varpi \wedge \eps)$ given in Theorem \ref{null13form}, is totally Ricci isotropic.
\end{corol}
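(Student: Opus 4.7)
The plan is to reduce the statement to the properties of a single null vector field on the Lorentzian factor $\widetilde M^{1,5}$, using the product structure. Since $f=1$, Proposition \ref{wraON} collapses to the well-known direct-product decomposition of the Ricci tensor: for $X,Y\in\mathfrak{X}(M)$, $\tilde X,\tilde Y\in\mathfrak{X}(\widetilde M)$,
\[
\Ric^{h}(X,Y)=\Ric^{g}(X,Y),\qquad \Ric^{h}(\tilde X,\tilde Y)=\Ric^{\tilde g}(\tilde X,\tilde Y),\qquad \Ric^{h}(X,\tilde Y)=0.
\]
Ricci-flatness of $(M^{5},g)$ kills the first piece, and hypothesis \eqref{ricpal} of Theorem \ref{null13form} identifies the second piece with $\tilde\varpi\otimes\tilde\varpi$.

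From this I would pass to the Ricci endomorphism. Writing $\tilde\varpi^{\sharp}$ for the metric dual of $\tilde\varpi$ with respect to $\tilde g$ (equivalently, with respect to $h$, since $h|_{\widetilde M}=\tilde g$), the displayed formula for $\Ric^{h}$ means that for every vector field $Z\in\mathfrak{X}(\mathsf{X})$, decomposed as $Z=X+\tilde Z$ with $X\in\mathfrak{X}(M)$, $\tilde Z\in\mathfrak{X}(\widetilde M)$,
\[
\ric^{h}(Z)=\tilde\varpi(\tilde Z)\,\tilde\varpi^{\sharp}.
\]
Hence $\ric^{h}(Z)$ lies in the line spanned by $\tilde\varpi^{\sharp}$ for every $Z$, i.e.\ the image of $\ric^{h}$ is contained in $\mathbb{R}\,\tilde\varpi^{\sharp}$.

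The conclusion then follows from the nullness of $\tilde\varpi$: for any $Z,W\in\mathfrak{X}(\mathsf{X})$ with vertical parts $\tilde Z,\tilde W$,
\[
h\bigl(\ric^{h}(Z),\ric^{h}(W)\bigr)=\tilde\varpi(\tilde Z)\,\tilde\varpi(\tilde W)\,\tilde g\bigl(\tilde\varpi^{\sharp},\tilde\varpi^{\sharp}\bigr)=\tilde\varpi(\tilde Z)\,\tilde\varpi(\tilde W)\,\|\tilde\varpi\|^{2}_{\tilde g}=0,
\]
which is exactly the defining property of total Ricci isotropy.

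There is no genuine obstacle: the only conceptual points to verify are that the horizontal lift of vector fields preserves the product Ricci decomposition (which is standard for $f=1$) and that the image of $\ric^{h}$ is indeed one-dimensional and spanned by the null vector $\tilde\varpi^{\sharp}$. Both are immediate from Proposition \ref{wraON} and the rank-one structure $\tilde\varpi\otimes\tilde\varpi$ supplied by \eqref{ricpal}, so the corollary reduces to a one-line consequence of $\|\tilde\varpi\|^{2}_{\tilde g}=0$.
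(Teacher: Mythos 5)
Your proposal is correct and follows essentially the same route as the paper: reduce to the Lorentzian factor via the product splitting of $\Ric^{h}$ and Ricci-flatness of $(M^{5},g)$, read off $\ric(\tilde X)=\tilde\varpi(\tilde X)\,\tilde\varpi^{\sharp}$ from the rank-one form of \eqref{ricpal}, and conclude from $\|\tilde\varpi\|^{2}_{\tilde g}=0$. The only cosmetic difference is that you carry out the computation for the full endomorphism $\ric^{h}$ on $\mathsf{X}$, whereas the paper works with $\ric^{\tilde g}$ on $\wi{M}^{1,5}$ after the reduction; the content is identical.
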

\begin{proof}
By assumption, $(M^5, g)$ is Ricci flat and hence it is sufficient to show that the  Lorentzian manifold $(\wi{M}^{1, 5}, \tilde{g})$   is   totally Ricci-isotropic (see for example \cite{LorEin}).   
The Ricci endomorphism  ${\rm ric}^{\tilde g} : T\wi{M}^{1, 5}\to T\wi{M}^{1, 5}$ corresponding to  $\tilde g$ is defined by   $\tilde{g}\big({\rm ric}^{\tilde g}(\tilde X), \tilde Y\big)=\Ric^{\tilde g}(\tilde X, \tilde Y)$,  for any $\tilde X\,, \tilde Y \in \mathfrak{X}  (\wi M)$, where as above $\Ric^{\tilde g}$ denotes the Ricci tensor.
 Then, by the relation (\ref{ricpal}) we deduce that 
 \[
 \tilde{g}\big({\rm ric}^{\tilde g}(\tilde X), \tilde Y\big)=\tilde \varpi(\tilde X)\tilde \varpi(\tilde Y)=\tilde \varpi(\tilde X)\tilde g(\tilde \varpi^{\sharp}, \tilde Y) =\tilde g(\tilde \varpi(\tilde X)\tilde \varpi^{\sharp}, \tilde Y)\,,
 \] 
and thus,   ${\rm ric}^{\tilde g}(\tilde X)=\tilde \varpi(\tilde X)\tilde \varpi^{\sharp}$.
Since  the  1-form $\tilde \varpi$ is by assumption   null,    so is $\tilde \varpi^{\sharp}$, i.e. $\tilde g(\tilde \varpi^{\sharp}, \tilde \varpi^{\sharp})=0$ and thus  $
 \tilde{g}\big({\rm ric}^{\tilde{g}}(\tilde X), {\rm ric}^{\tilde{g}}(\tilde Y)\big)=\tilde\varpi(\tilde X)\tilde\varpi(\tilde Y)\|\tilde\varpi^{\sharp}\|^{2}_{\tilde g}=0$, which proves the claim.
\end{proof} 
  Let us now describe supergravity solutions related to Theorem \ref{null13form}.
\begin{corol}\label{sol3}
Let $(\wi{M}^{1, 5}=\bb{R}\times N^{4}\times\bb{R}, \tilde{g}=2{\rm d}v{\rm  d}u+\rho+H({\rm d}u)^2)$ be a six-dimensional  Ricci-isotropic Walker manifold.
  Let also $(M^{5}=\mc{P}^{4}\times\bb{R}, g)$, (or $M^{5}=\mc{P}^{4}\times\Ss^1$)   be a   five-dimensional Riemannian manifold, where $(\mc{P}^{4}, \mathsf{p}, \omega, J)$ is a Ricci-flat K\"ahler manifold and $g=-\mathsf{p}-{\rm d}t^2$. Then,   $(\wi{M}^{1, 5}\times M^5, \tilde g+g)$ endowed with the flux form $\mathsf{F} = \tilde \varpi \wedge \eps$, where $ \tilde \varpi ={\rm d}u\in\Omega^{1}(\wi{M}^{1, 5})$ and $\eps=\omega\wedge {\rm d}t \in\Omega^{3}(M^{5})$, respectively, is a bosonic supergravity background, if and only if $\Delta H=-2$.
\end{corol}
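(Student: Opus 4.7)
The plan is to verify that every hypothesis of Theorem \ref{null13form} is satisfied under the stated data, and to show that with all other conditions being automatic, the sole remaining Ricci equation (\ref{ricpal}) on the Walker factor reduces to the scalar equation $\Delta H = -2$.

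First I would check the conditions on the 1-form $\tilde \varpi = {\rm d}u$ on $\wi M^{1,5}$. In Walker coordinates one has $\tilde g(\partial_v, \cdot) = {\rm d}u$, so $\tilde\varpi$ is the metric dual of the parallel null vector field $\partial_v$; in particular ${\rm d}u$ is itself parallel, hence null, closed, and co-closed, so all the $\tilde\varpi$-conditions in (\ref{cclos}) hold. For $\eps = \omega \wedge {\rm d}t$ on $M^5 = \mc P^4 \times \bb R$, closedness is immediate from $\mathrm{d}\omega = 0$ (K\"ahler) and $\mathrm{d}({\rm d}t) = 0$. Co-closure follows because the K\"ahler form is harmonic on $(\mc P^4, \mathsf p)$, hence also with respect to the sign-flipped metric $-\mathsf p$, and the Hodge star on the Riemannian product $(M^5, g)$ factors so that $*_g(\omega \wedge {\rm d}t)$ is a constant multiple of the closed form $*_{-\mathsf p}\omega$, yielding $\mathrm{d}*_g\eps = 0$.

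Next I would verify the length condition $\|\eps\|_g^2 = -2$ required by Theorem \ref{null13form}. Using the product-norm identity of Lemma \ref{stvarious} (in the unwarped case $f=1$) applied to $(M^5, g) = (\mc P^4, -\mathsf p) \times (\bb R, -{\rm d}t^2)$, one has
\[
\|\omega \wedge {\rm d}t\|_g^2 = \|\omega\|_{-\mathsf p}^2 \cdot \|{\rm d}t\|^2.
\]
A $k$-form paired with $(-\mathsf p)^{-1}$ picks up $(-1)^k$ relative to pairing with $\mathsf p^{-1}$; since $\omega$ is a 2-form, $\|\omega\|_{-\mathsf p}^2 = \|\omega\|_{\mathsf p}^2 = 2$ (via $\omega \wedge \omega = 2\vol_{\mathsf p}$, as in the example following Corollary \ref{sol2}). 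Together with $\|{\rm d}t\|^2 = -1$ this gives $\|\eps\|_g^2 = -2$.

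The Ricci-flatness of $(M^5, g)$ required by Theorem \ref{null13form} is immediate, since $\mc P^4$ is Ricci-flat K\"ahler and the $\bb R$-factor (or $\Ss^1$-factor) is flat. It remains to translate (\ref{ricpal}). By (\ref{riccianton}), the only non-zero Ricci component of the Walker metric is $\Ric^{\tilde g}_{uu} = -\tfrac12 \Delta H$; the rank-one tensor $\tilde\varpi \otimes \tilde\varpi = {\rm d}u \otimes {\rm d}u$ likewise has its unique non-zero entry in the $(u,u)$-slot, equal to $1$. Consequently (\ref{ricpal}) holds in every direction simultaneously if and only if $-\tfrac12 \Delta H = 1$, i.e.\ $\Delta H = -2$. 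The main technical obstacle is sign bookkeeping under the paper's mostly-minus convention, both in the computation of $\|\eps\|_g^2$ and in handling $*_g\eps$; once these signs are tracked consistently, the biconditional follows at once from Theorem \ref{null13form}.
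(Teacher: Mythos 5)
Your proposal is correct and follows essentially the same route as the paper: verify the hypotheses of Theorem \ref{null13form} for $\tilde\varpi={\rm d}u$ (parallel, null, closed, co-closed) and $\eps=\omega\wedge{\rm d}t$ (closed, co-closed, $\|\eps\|_g^2=-2$), then use (\ref{riccianton}) to reduce the Ricci condition (\ref{ricpal}) to $-\tfrac12\Delta H=1$ in the single surviving $(u,u)$-slot. Your slightly more explicit sign bookkeeping for $\|\omega\|^2_{-\mathsf p}$ is a harmless refinement of the paper's computation $\|\eps\|^2_g=\|\omega\|^2_{\mathsf p}\,\|{\rm d}t\|^2_{-{\rm d}t^2}=-2$.
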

\begin{proof}
Consider   the Lorentzian manifold $\wi{M}^{1, 5}=\bb{R}\times N\times \bb{R}$ endowed with the metric $\tilde g$ given in (\ref{ppwave}), and  set  $\tilde\varpi:={\rm d}u\,.$
Then, $\tilde\varpi$ is null and parallel, and hence closed and co-closed. On the other hand,  by Theorem \ref{null13form}  the supergravity Einstein equation has the form
\[
\Ric^{\tilde g}(\tilde X , \tilde X) =\tilde \varpi(\tilde X) \tilde \varpi(\tilde X)
\]
and according to (\ref{riccianton})  this survives only in the direction of  $\tilde X= \partial/\partial u$, for which  gives 
$\Ric^{\tilde g}(X, X)=1$ (for any other vector field $\tilde X$ both sides are zero). Hence, by  (\ref{riccianton}) we finally get the constrain $\Delta H=-2$.   For the Riemannian part of the direct product $\mathsf{X}^{1, 10}=M^{5}\times\wi{M}^{1, 5}$, according to Theorem \ref{null13form}, we need a Ricci-flat manifold $(M^{5}, g)$ with a closed and co-closed 3-form $\eps$ of constant length $-2$. Consider the   product $M^{5}=\mc{P}^{4}\times\bb{R}$ (or $\mc{P}^{4}\times\Ss^{1}$), where $(\mc{P}^{4}, \mathsf{p}, \omega, J)$ is some Ricci-flat four-dimensional K\"ahler manifold and set $ \eps:=\omega\wedge {\rm d}t$. 
Then, $\ast\eps=\ast_{\mathsf{p}}\omega$ and    since $\omega$ is a K\"ahler form, $\eps$ is closed and co-closed.  Moreover, $\|\eps\|^{2}_{g}=\|\omega\|_{\mathsf{p}}^{2}\|{\rm d}t\|^2_{-{\rm dt}^2}=-2$, as required.
\end{proof}

\begin{example}
A explicit example satisfying  Corrollary \ref{sol3} is given   by choosing $N^4=\bb{R}^4$ and $H=\frac{1}{4}((x^{1})^{2}+\cdots + (x^{4})^{2})$.
\end{example}

\subsection{Results related with the flux form $\mathsf{F} = \tilde \al + \tilde \be \wedge \nu$} \label{subs6}
 In the remainder of this section, we  assume that $\mathsf{F}$ is given by (\ref{spcasesumleft}), 
\[
\mathsf{F}:= \tilde \al + \tilde \be \wedge \nu\,, \quad \tilde\al\in\Omega^{4}(\tilde M)\,, \quad  \tilde\be\in\Omega^{3}(\tilde M)\,,  \quad \nu\in\Omega^{1}(M)\,.
\]
To simplify the situation in the constrains given in  Proposition \ref{spck}  we may set  $\con =1$.   Let us also assume that  $f=1$. Then, by  Proposition \ref{spck}  and Corollary \ref{spcasesumleftt}  we deduce the  following
\begin{theorem}\label{spckckck}
Let $(M^{5}, g)$ be a  five-dimensional Ricci-flat Riemannian manifold endowed with  a  closed 1-form $\nu$ such that $\mathrm{d} * \nu =\mathrm{vol} _M$. Then, the product of $M^{5}$ with a six-dimensional Lorentzian manifold $(\wi{M}^{1, 5}, \wi{g})$  satisfying the following system of equations 
\begin{eqnarray}
\Ric^{\tilde g} (\tilde X , \tilde Y) &=& -\frac{1}{2}\langle i_{\tilde X}\tilde{\al}, i_{\tilde Y}\tilde{\al}\rangle_{\tilde g}-\frac{\|\nu\| ^2 _{g}}{2}\langle i_{\tilde X}\tilde{\be}, i_{\tilde Y}\tilde{\be}\rangle_{\tilde g}\,, \label{riccitilda} \\
\mathrm{d}\tilde  \al &=& \mathrm{d} \tilde \be=\mathrm{d} \tilde * \tilde \be  =0\,,\quad \tilde * \tilde \be = - \mathrm{d} \tilde * \tilde \al\,,  \ \label{maxwelt} \\
 \langle i _{ \tilde X }\tilde \al, \tilde \be \rangle_{\tilde g}&=&0\,, \ \label{extra} 
\end{eqnarray}
for some null forms $\tilde\al\in\Omega^{4}(\wi M^{1, 5})$ and $\tilde\be\in\Omega^{3}(\wi M^{1, 5})$ and for any $\tilde X, \tilde Y\in\fr{X}(\wi{M}^{1, 5})$, solves the bosonic supergravity equations for the flux form $\mathsf{F}:= \tilde \al + \tilde \be \wedge \nu$. In particular, $(M^5, g)$ must be non-compact.
\end{theorem}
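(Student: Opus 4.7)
My plan is to verify that the three field equations (closedness, Maxwell, supergravity Einstein) hold in each of their components (HH, HV, VV), by specializing the general results of Proposition \ref{spck} and Corollary \ref{spcasesumleftt} to the case $f=1$ with null forms $\tilde\al,\tilde\be$, and then to treat non-compactness as a Stokes-type argument. Since $\mathsf{F}=\tilde\al+\tilde\be\wedge\nu$ falls under case \eqref{spcasesumleft} of Proposition \ref{spck}, with $\con=1$, the combination of the closedness condition and the Maxwell equation is equivalent to the system $\mathrm{d}\tilde\al=\mathrm{d}\tilde\be=\mathrm{d}\nu=\mathrm{d}\tilde{*}\tilde\be=0$, $\mathrm{d}{*}\nu=\vol_M$, and $\tilde{*}\tilde\be=-\mathrm{d}\tilde{*}\tilde\al$. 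The first is granted by $\mathrm{d}\tilde\al=\mathrm{d}\tilde\be=0$ and the assumption that $\nu$ is closed; the remaining conditions match \eqref{maxwelt} and the hypothesis $\mathrm{d}{*}\nu=\vol_M$ exactly.

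Next I would check the Einstein equation using Corollary \ref{spcasesumleftt}. Since $f\equiv 1$, the Hessian term $\frac{6}{f}H^f$ and the function $\hat f$ vanish, and by nullness of $\tilde\al,\tilde\be$ we also have $\|\tilde\al\|_{\tilde g}^2=\|\tilde\be\|_{\tilde g}^2=0$. Substituting into the HH-part of Corollary \ref{spcasesumleftt} gives
\[
\Ric^g(X,Y)=-\tfrac{1}{2}\|\nu\|_g^2\,\nu(X)\nu(Y)\cdot 0=0,
\]
wait — more carefully, both scalar coefficients in front of $g(X,Y)$ and of $\nu(X)\nu(Y)$ vanish because they carry factors $\|\tilde\al\|_{\tilde g}^2$ or $\|\tilde\be\|_{\tilde g}^2$, so the HH-part collapses to the Ricci-flatness $\Ric^g=0$, which holds by assumption. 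The VV-part of the same corollary, after cancelling the two terms containing the norms of $\tilde\al$ and $\tilde\be$, reduces exactly to the prescribed equation \eqref{riccitilda}. Finally, the HV-part for this flux form was computed in Remark \ref{nicerem} and reads $\tilde g(\tilde\be,i_{\tilde Z}\tilde\al)=0$ for every $\tilde Z\in\mathfrak{X}(\wi M)$, which is precisely \eqref{extra} (upon identifying $\langle\cdot,\cdot\rangle_{\tilde g}$ with $\tilde g$ on 3-forms up to a positive constant).

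For non-compactness of $M^5$, I would invoke Stokes' theorem: if $M$ were closed (compact without boundary), integrating the identity $\mathrm{d}{*}\nu=\vol_M$ over $M$ would produce
\[
0=\int_M \mathrm{d}{*}\nu=\int_M \vol_M=\vol(M)>0,
\]
a contradiction. Hence $M$ must be non-compact. No step requires a genuinely new calculation; the only subtlety I anticipate is bookkeeping the vanishing of all warped-product correction terms when $f\equiv 1$ and the null-form hypotheses $\|\tilde\al\|_{\tilde g}^2=\|\tilde\be\|_{\tilde g}^2=0$ are enforced, and then recognising that each assumption listed in the theorem matches one specific condition produced by Proposition \ref{spck} or Corollary \ref{spcasesumleftt}. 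The main pitfall is simply making sure the HV-part is included and cross-checked against Remark \ref{nicerem}, since the HH and VV parts alone would not detect a violation of \eqref{extra}.
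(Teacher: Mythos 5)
Your proposal is correct and follows essentially the same route as the paper: the paper likewise reduces everything to Proposition \ref{spck} (case of $\mathsf{F}=\tilde\al+\tilde\be\wedge\nu$ with $\con=1$) and Corollary \ref{spcasesumleftt}, noting that $\|\tilde\al\|^2_{\tilde g}=\|\tilde\be\|^2_{\tilde g}=0$ kills the norm terms while $f\equiv 1$ kills the Hessian and $\hat f$ terms, and obtains non-compactness from the same Stokes argument applied to $\mathrm{d}\ast\nu=\vol_M$. Your write-up merely spells out the component-by-component bookkeeping that the paper leaves implicit.
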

\begin{proof}
We have $\|\tilde \al \| _{ \tilde g }^2 =\|\tilde \be  \| _{ \tilde g }^2  =0$ by assumption, and all the constraints  are direct  by  Proposition \ref{spck}  and Corollary \ref{spcasesumleftt}, except the one for the non-compactness. 
Assume that $M^5$ is closed. Then, since $\ast\nu$ is a 4-form and  $M^5$ has no boundary,  the claim follows by Stokes theorem, 
\[
\int_{M}{\rm d}\ast\nu=\int_{\partial M}\ast\nu=0\,,
\]
a contradiction due to the constrain which establishes the Maxwell equation, i.e. $\mathrm{d} * \nu = \mathrm{vol} _M$\,.
\end{proof}

\begin{remark}
\textnormal{Based again on Stokes theorem and regarding  this time the flux-form $\mathsf{F}$ defined by $\mathsf{F}=\tilde \varpi \wedge \eps  + \theta$, 
one similarly deduces  that the Lorentzian manifold $(\wi{M}^{1, 5}, \tilde g)$ must be non-compact (due to the condition $\mathrm{d} \tilde * \tilde \varpi  = \con   \mathrm{vol} _{\wi M}$, see Case (7) of  Proposition \ref{spck}). Note that Cases (3) and (7) of  Proposition \ref{spck} will not be further analysed in the present work, while  results for Case (8) (respectively  Case(9)), are extracted by the  study given in  subsection \ref{subs1} and in the last  section (respectively, in subsection \ref{subs4}).}
\end{remark}

Let us present particular  examples  satisfying Theorem \ref{spckckck}. The idea here occurs as a  combination of the constructions discussed  in Corollaries \ref{sol1} and \ref{sol2}, respectively. At a fist step we provide the following more general result.
\begin{prop}\label{sol4}
Let $(\wi{M}^{1, 5}=\bb{R}\times N^{4}\times\bb{R}, \tilde{g}=2{\rm d}v{\rm  d}u+\rho+H({\rm d}u)^2)$ be a six-dimensional  Ricci-isotropic Walker manifold.
   Set
\[
\tilde\al:={\rm d}u\wedge\Omega\,, \quad \tilde\be:={\rm d}u\wedge\omega,
\]
for some    closed 3-form $\Omega\in\Omega^{3}(N^{4})$ and a closed  2-form $\omega\in\Omega^{2}(N^{4})$, such that
\begin{equation}\label{omeome}
\omega=\ast_{\rho}{\rm d}\ast_{\rho}\Omega\,.
\end{equation}
Let also $(M^{5}, g)$  be a non-compact  five-dimensional Ricci-flat Riemannian manifold endowed with a closed 1-form  $\nu\in\Omega^{1}(M^{5})$,   such that $\mathrm{d} * \nu =\mathrm{vol} _M$.  Then, the eleven-dimensional  Lorentzian manifold   $(\mathsf{X}^{1, 10}=M^{5}\times\wi{M}^{1, 5}, h=g+\tilde g)$
is a bosonic supergravity background   with respect to the flux form $\mathsf{F}=\tilde\al+\tilde\be\wedge\nu$, 
if and only if  
\begin{equation}\label{finhese}
\Delta H=\|\Omega\|^{2}_{\rho}+\|\nu\|^{2}_{g}\cdot\|\omega\|^{2}_{\rho}\,.
\end{equation}
 \end{prop}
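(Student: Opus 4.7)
The plan is to verify the hypotheses of Theorem \ref{spckckck} with the concrete data of the proposition. The Riemannian factor $(M^{5},g)$ meets its assumptions by hypothesis (non-compact, Ricci-flat, and endowed with a closed 1-form $\nu$ satisfying $\mathrm{d}*\nu=\vol_M$); hence the task reduces to checking on $(\wi{M}^{1,5},\tilde g)$ the nullness and (co-)closedness conditions \eqref{maxwelt}, the orthogonality \eqref{extra}, and the Ricci identity \eqref{riccitilda}.

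First I would note that $\tilde\al=\mathrm{d}u\wedge\Omega$ and $\tilde\be=\mathrm{d}u\wedge\omega$ are null, since the parallel 1-form $\mathrm{d}u$ is null on the Walker manifold, and both are closed because $\mathrm{d}\mathrm{d}u=0$ and $\mathrm{d}\Omega=\mathrm{d}\omega=0$. A direct computation in Walker coordinates (using that $\mathrm{d}u$ is $\tilde g$-orthogonal to every $\mathrm{d}x^{i}$ and that the volume form is $\mathrm{d}u\wedge(\mathrm{d}v+\tfrac{H}{2}\mathrm{d}u)\wedge\vol_\rho$) yields
\[
\tilde *\tilde\al \;=\; \mathrm{d}u\wedge *_\rho\Omega,\qquad \tilde *\tilde\be \;=\; \mathrm{d}u\wedge *_\rho\omega,
\]
so $\mathrm{d}\tilde *\tilde\al=-\mathrm{d}u\wedge \mathrm{d}*_\rho\Omega$. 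Using that $*_\rho^{2}=\Id$ on 2-forms of the 4-manifold $N^{4}$, the hypothesis $\omega=*_\rho\mathrm{d}*_\rho\Omega$ is equivalent to $*_\rho\omega=\mathrm{d}*_\rho\Omega$, which is precisely what produces the Maxwell-type relation $\tilde *\tilde\be=-\mathrm{d}\tilde *\tilde\al$; and then $\mathrm{d}\tilde *\tilde\be=0$ follows from $\mathrm{d}^{2}=0$.

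Condition \eqref{extra} is handled by exploiting the isotropy of $\mathrm{d}u$. Whenever $\tilde X(u)=0$ the contraction $i_{\tilde X}\tilde\al=-\mathrm{d}u\wedge i_{\tilde X}\Omega$ still carries $\mathrm{d}u$ as a factor, and pairings of the form $\langle \mathrm{d}u\wedge a,\mathrm{d}u\wedge b\rangle_{\tilde g}$ vanish, as is visible from $(\mathrm{d}u\wedge a)\wedge\tilde *(\mathrm{d}u\wedge b)=\mathrm{d}u\wedge a\wedge\mathrm{d}u\wedge *_\rho b=0$. For $\tilde X(u)\ne 0$ the additional term $\tilde X(u)\Omega$ pairs trivially with $\tilde\be=\mathrm{d}u\wedge\omega$, since $\Omega$ has no $\mathrm{d}v$-component while $\mathrm{d}u$ is $\tilde g$-orthogonal to every $\mathrm{d}x^{i}$.

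The main step is the Ricci identity \eqref{riccitilda}. By \eqref{riccianton} the only non-vanishing component of $\Ric^{\tilde g}$ is $\Ric^{\tilde g}_{uu}=-\tfrac{1}{2}\Delta H$, and the nullness arguments above force both sides of \eqref{riccitilda} to vanish outside the $(\partial_u,\partial_u)$-direction. For $\tilde X=\tilde Y=\partial_u$ one has $i_{\partial_u}\tilde\al=\Omega$ and $i_{\partial_u}\tilde\be=\omega$, whence $\langle i_{\partial_u}\tilde\al,i_{\partial_u}\tilde\al\rangle_{\tilde g}=\|\Omega\|^{2}_{\rho}$ and $\langle i_{\partial_u}\tilde\be,i_{\partial_u}\tilde\be\rangle_{\tilde g}=\|\omega\|^{2}_{\rho}$. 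Equating
\[
-\tfrac{1}{2}\Delta H\;=\;-\tfrac{1}{2}\|\Omega\|^{2}_{\rho}-\tfrac{\|\nu\|^{2}_{g}}{2}\|\omega\|^{2}_{\rho}
\]
gives \eqref{finhese}, and the same equality going backward establishes the ``only if'' direction. The principal obstacle I expect is the simultaneous bookkeeping of signs ensuring $\tilde *\tilde\be=-\mathrm{d}\tilde *\tilde\al$ together with the degree-wise nullness of the right-hand side of \eqref{riccitilda} off the $(\partial_u,\partial_u)$-direction; once these are in place, the remainder reduces to routine contractions in the Walker frame.
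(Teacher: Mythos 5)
Your proposal is correct and follows essentially the same route as the paper: reduce to the hypotheses of Theorem \ref{spckckck}, compute $\tilde *\tilde\al={\rm d}u\wedge *_\rho\Omega$ and $\tilde *\tilde\be={\rm d}u\wedge *_\rho\omega$ in Walker coordinates so that $\tilde *\tilde\be=-{\rm d}\tilde *\tilde\al$ becomes exactly $\omega=*_\rho{\rm d}*_\rho\Omega$, and then use (\ref{riccianton}) to localize the Einstein equation to the $(\partial_u,\partial_u)$-direction, where $i_{\partial_u}\tilde\al=\Omega$ and $i_{\partial_u}\tilde\be=\omega$ yield (\ref{finhese}). Your only departures are cosmetic: you obtain ${\rm d}\tilde *\tilde\be=0$ from ${\rm d}^2=0$ where the paper notes it as a consequence of (\ref{omeome}), and you spell out the verification of (\ref{extra}) that the paper dismisses as immediate.
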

\begin{proof}
The  forms  $\tilde\al={\rm d}u\wedge\Omega$ and  $\tilde\be={\rm d}u\wedge\omega$ are 
  null forms on $(\wi{M}^{1, 5}, \tilde g)$.  Moreover, $\tilde\al$ (respectively $\tilde\be$) is closed, if and only if $\Omega$ (respectively $\omega$) is closed. By the proof in Corollary \ref{sol2} recall   that $\tilde\ast\tilde\be={\rm d}u\wedge\ast_{\rho}\omega\,,$
 hence  the  condition ${\rm d}\tilde\ast\tilde\be=0$ of   (\ref{maxwelt})   is satisfied,  if and only if 
\begin{equation}\label{coclosebe}
{\rm d}\ast_{\rho}\omega=0.
\end{equation}
The Hodge star $\tilde\ast\tilde\al$  is the 2-form on $\wi{M}^{1, 5}$  given by  $\tilde\ast\tilde\al={\rm d}u\wedge\ast_{\rho}\Omega\,,$
with  $\ast_{\rho}\Omega\in\Omega^{1}(N^{4})$. Hence,  the second condition    in  (\ref{maxwelt}), i.e. the relation $\tilde * \tilde \be = - \mathrm{d} \tilde * \tilde \al$, takes the form      
\[
{\rm d}u\wedge\ast_{\rho}\omega=-{\rm d}({\rm d}u\wedge\ast_{\rho}\Omega)={\rm d}u\wedge {\rm d}\ast_{\rho}\Omega.
\]
In this way we prove (\ref{omeome}), i.e.
 \[
\ast_{\rho}\omega= {\rm d}\ast_{\rho}\Omega \, \quad\Longleftrightarrow\quad \omega=\ast_{\rho}{\rm d}\ast_{\rho}\Omega\,,
\]
where the equivalence occurs  since $\ast_{\rho}^{2}$ acts as the identity on 2-forms on $N^{4}$.  Thus, observe  that if (\ref{omeome}) is satisfied, then also (\ref{coclosebe}) is valid, hence $\tilde\beta$   is co-closed, as required. Passing to the supergravity Einstein equation, as in the previous cases, by Theorem \ref{spckckck} and (\ref{riccianton}) we deduce that this is non-zero only in the direction of   $\tilde X=\partial/\partial u$. Indeed,   by  (\ref{riccitilda})   we see that
\[
\Ric^{\tilde g}(\tilde X, \tilde X)=-\frac{1}{12}\rho(\Omega, \Omega)-\frac{\|\nu\|^{2}_{g}}{4}\rho(\omega, \omega)\,,
\]
and a   comparison   with the relation (\ref{hessss2}) gives
 \[
 \Delta H=\frac{1}{6}\rho(\Omega, \Omega)+\frac{\|\nu\|^{2}_{g}}{2}\rho(\omega, \omega)=\langle\Omega, \Omega\rangle_{\rho}+\|\nu\|^{2}_{g}\langle\omega, \omega\rangle_{\rho}\,,
 \]
 which is equivalent to    (\ref{finhese}).  For some vector field $\tilde X\in\fr{X}(\widetilde M)$ different than $\partial/\partial u$ we see that $i_{\tilde X}\tilde\al=-{\rm d}u\wedge i_{\tilde X}\Omega$ and $i_{\tilde X}\tilde\be=-{\rm d}u\wedge i_{\tilde X}\omega$, which are both null and hence the induced condition by the supergravity Einstein equation is trivially satisfied.  To  complete the proof we need to show that
 the relation (\ref{extra}) is also true,  which easily follows.  
 \end{proof}
At a first sight,   the conditions (\ref{omeome}) and (\ref{finhese}) may seem complicated.  However,  since $N$ can be assumed to be  flat  it is not so hard to construct explicit examples.  
\begin{example}
\textnormal{Let us illustrate this construction in terms of specific pp-waves. So, set $N=\bb{R}^{4}$ and let  $\vol_{\rho}={\rm d}x^{1}\wedge {\rm d}x^{2}\wedge {\rm d}x^{3}\wedge {\rm d}x^{4}$ be the volume element.   For a 3-form we may use
\begin{equation}\label{OMEg}
 \Omega^{+}:= x^{2}{\rm d}x^{2}\wedge{\rm d}x^{3}\wedge{\rm d}x^{4}\,.
\end{equation}
Then ${\rm d}\Omega^{+}=0$ and moreover 
\[
 \ast_{\rho}\Omega^{+}=-x^{2}{\rm d}x^{1}\,\quad\Longrightarrow \quad {\rm d}\ast_{\rho}\Omega^{+}={\rm d}x^{1}\wedge{\rm d}x^{2}\,\quad\Longrightarrow \quad\ast_{\rho}{\rm d}\ast_{\rho}\Omega^{+}={\rm d}x^{3}\wedge{\rm d}x^{4}\,,
\]
and hence in a line with (\ref{omeome}) we can set
\begin{equation}\label{omeome1}
\omega^{+}:=\ast_{\rho}{\rm d}\ast_{\rho}\Omega^{+}={\rm d}x^{3}\wedge{\rm d}x^{4}\,.
\end{equation}
Then, ${\rm d}\omega^{+}=0$ and moreover ${\rm d}\ast_{\rho}\omega^{+}=0$, hence the condition (\ref{coclosebe}) is satisfied.  In particular, we have constructed a pair  $(\Omega^{+}, \omega^{+})\in\Omega^{3}(N)\times\Omega^{2}(N)$
such that the   null forms $\tilde\al^+:={\rm d}u\wedge\Omega^+$ and $\tilde\be^+:={\rm d}u\wedge\omega^+$ are solutions of  the  system of equations given in (\ref{maxwelt}). Moreover, $\tilde\al^+$ and $\tilde\be^+$ satisfy the relation (\ref{extra}) as an identity, for any $\tilde X\in\fr{X}(\widetilde M)$.  For the Riemannian part we need a non-compact Ricci-flat manifold $(M^{5}, g)$ admitting a closed 1-form $\nu\in\Omega_{\rm cl}^{1}(M)$ such that ${\rm d}\ast\nu=\vol_{M}$. Consider   $M^{5}=\bb{R}^{5}$ and the 1-form $\nu=y^{1}{\rm d}y^{1}$. Then $\nu$ is closed with $\ast\nu=y^{1}{\rm d}y^{2}\wedge{\rm d}y^{3}\wedge{\rm d}y^{4}\wedge {\rm d}y^{5}$ and  ${\rm d}\ast\nu=\vol_{M}$.  One computes   $\|\Omega\|^{2}_{\rho}=-(x^{2})^{2}$, $\|\omega\|^{2}_{\rho}=1$ and $\|\nu\|^{2}_{g}=-(y^{1})^{2}$.  Thus, a function $H$ solving (\ref{finhese}) is given by $H=\frac{1}{12}\big((x^{1})^{4}+(x^{2})^{4})$, and since $H$   depends only on $x^{1}, x^{2}$,  this example produces decomposable solutions.   Note that analogous  results make sense also for the pair $(\Omega^{-}, \omega^{-})$, where the 3-form $\Omega^{-}$ is defined by $\Omega^{-}:= -x^{2}{\rm d}x^{2}\wedge{\rm d}x^{3}\wedge{\rm d}x^{4}$. More general, the same approach applies for any 3-form  of the type $\Omega=\pm x^{i}{\rm d}x^{i}\wedge{\rm d}x^{j}\wedge{\rm d}x^{k}\,,$
with $1\leq i<j<k\leq 4$.}
\end{example}

\section{Constructions from almost contact geometry}
In the final section, we focus on Case (5) of Proposition \ref{spck}, i.e. assume that   the flux 4-form depends only in the Riemmannian part and hence it is given by   (\ref{spcasetheta}), i.e.  $\mathsf{F}:=\theta$, 
 for some  non-trivial 4-form $\theta\in\Omega^{4}(M)$.  For this choice,  next  we describe constructions  related  with well-known  structures in Riemannian geometry,  e.g.    {\it almost contact structures}. 
 
First notice that for $f=1$, a combination of Proposition \ref{spck} and Corollary \ref{spcasethetaa}  yields that
\begin{theorem}\label{ineee}
Consider a five-dimensional Riemannian manifold $(M, g)$ endowed with a non-trivial 4-form $\theta$   of constant length, satisfying the following conditions
\begin{eqnarray*}
\dd\theta&=&\dd*\theta=0, \\
\Ric^{g}(X , Y ) &=& \frac{\|\theta\|^{2}_{g}}{6}g(X, Y)-\frac{1}{2}\langle i_{X}\theta, i_{Y}\theta\rangle_{g}\,, \quad \forall \    X\,, Y \in \mathfrak{X}  (M)\,.
\end{eqnarray*}
 Then,   the product  manifold $(\mathsf{X}^{1, 10}=M^{5}\times \wi{M}^{1, 5}, h=g+\tilde g)$ endowed with the flux 4-form $\mathsf{F}=\theta$,  where  $(\wi{M}^{1, 5}, \tilde g)$ is a six-dimensional Lorentzian Einstein manifold with Einstein  constant $\|\theta\|^{2}/6$,  solves the bosonic supergravity equations.
\end{theorem}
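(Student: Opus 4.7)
The plan is to verify that all three bosonic supergravity field equations (closedness of $\mathsf{F}$, Maxwell equation, and supergravity Einstein equation) hold by reducing them to the already-established special-case results in Proposition \ref{spck} and Corollary \ref{spcasethetaa}, specialized to $f\equiv 1$.

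First, I would address the closedness of $\mathsf{F}=\theta$ together with the Maxwell equation by invoking Proposition \ref{spck}(5): the triple $(\mathsf{X},h,\mathsf{F})$ satisfies $\mathrm{d}\mathsf{F}=0$ and $\mathrm{d}\star\mathsf{F}=\tfrac{1}{2}\mathsf{F}\wedge \mathsf{F}$ precisely when $\mathrm{d}\theta=\mathrm{d}(f^{6}\ast\theta)=0$. With $f=1$ this collapses to $\mathrm{d}\theta=\mathrm{d}\ast\theta=0$, which is part of the hypothesis.

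Next I would turn to the supergravity Einstein equation and split it into the HH, VV and HV parts analysed in Section \ref{SUGRAE}. For the HH-part, Corollary \ref{spcasethetaa} with $f\equiv 1$ (so $H^{f}\equiv 0$) gives
\[
\Ric^{g}(X,Y)=\tfrac{1}{6}\|\theta\|^{2}_{g}g(X,Y)-\tfrac{1}{2}\langle i_{X}\theta,i_{Y}\theta\rangle_{g},
\]
which is exactly the hypothesis on $(M^{5},g)$. For the VV-part, the same Corollary with $f\equiv 1$ (so $\hat f\equiv 0$) yields
\[
\Ric^{\tilde g}(\tilde X,\tilde Y)=\tfrac{1}{6}\|\theta\|^{2}_{g}\,\tilde g(\tilde X,\tilde Y).
\]
Here is where the constancy of $\|\theta\|_{g}$ is essential: the right-hand side depends only on the $\wi M$-arguments, while $\|\theta\|^{2}_{g}$ is a priori a function on $M$, so for the equation to be internally consistent across $\mathsf{X}=M\times\wi M$ we need $\|\theta\|^{2}_{g}$ to be a constant. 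Granting this, the equation asserts exactly that $(\wi M^{1,5},\tilde g)$ is Einstein with Einstein constant $\|\theta\|^{2}_{g}/6$, which is the remaining hypothesis.

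Finally, the HV-part reduces to the constraint (\ref{useHV}); since in $\mathsf{F}=\theta$ the forms $\tilde\al,\tilde\be,\tilde\gamma,\tilde\varpi,\nu,\delta,\eps$ all vanish, every term of (\ref{useHV}) is trivially zero, as already recorded for the special case (\ref{spcasetheta}) in Remark \ref{nicerem}. Combining the three parts shows that $(\mathsf{X}^{1,10}=M^{5}\times\wi M^{1,5},h=g+\tilde g,\mathsf{F}=\theta)$ is a bosonic supergravity background. The only potentially delicate step is the observation on the constancy of $\|\theta\|_g$; everything else is a direct specialisation $f\equiv 1$ of the general results already proved in Proposition \ref{spck} and Corollary \ref{spcasethetaa}.
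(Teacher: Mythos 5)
Your proposal is correct and follows exactly the route the paper takes: the theorem is obtained by specializing Proposition \ref{spck}(5) and Corollary \ref{spcasethetaa} to the unwarped case $f\equiv 1$ (so $H^{f}=0$ and $\hat f=0$), with the HV-part trivially satisfied as noted in Remark \ref{nicerem}. Your additional observation that constancy of $\|\theta\|^{2}_{g}$ is what makes the VV-equation a genuine Einstein condition on $\wi M$ is a useful explicit remark, but it does not change the argument.
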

By using the Hodge star operator $\ast : \Omega^{p}(M)\to\Omega^{5-p}(M)$, one can rewrite the previous result in terms of the 1-form $\eta:=\ast\theta$, which turns out to be more useful for geometric applications.  Recall that   $X^{\flat}\wedge\ast \al= (-1)^{p-1}\ast(X\lrcorner \al)\,,$
   for any  $p$-form $\al$ and vector field $X$, where  $X^{\flat}$ is the dual 1-form, i.e. $X^{\flat}(Y)=g(X, Y)$.  For convenience, let us denote by $Z$ the vector field corresponding to $\eta$. 
    Then we obtain   $ \ast(Z\lrcorner \vol_{M})=\eta\wedge\ast \vol_{M}= \eta\,.$
Since $\ast^{2}\al=(-1)^{p(5-p)}\al$ for any $\al\in\Omega^{p}(M^{5})$, it finally follows that      $Z\lrcorner \vol_{M}=\ast\eta=\theta$.   Thus  based on the formula $\mc{L}_{Z}\vol_{M}={\rm div}(Z)\vol_{M}$ we deduce that
  \begin{lemma}\label{cmax}
  The condition  $  \dd\theta=\dd*\theta=0$ 
  is equivalent to say that  $\eta$ is a closed and co-closed 1-form, in particular the co-closedness  is equivalent to say that the  vector field $Z$ is divergence free.
  \end{lemma}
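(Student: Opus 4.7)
The plan is to prove the two equivalences by chasing the duality $\theta=\ast\eta$ through the Hodge star and through Cartan's magic formula.

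First, I would observe that since $M$ is five-dimensional and $\theta\in\Omega^{4}(M)$, we have $\ast^{2}\theta=(-1)^{4\cdot 1}\theta=\theta$, so the relation $\eta=\ast\theta$ is equivalent to $\theta=\ast\eta$. Using this, the condition $\dd\ast\theta=0$ becomes $\dd\ast\ast\eta=\dd\eta=0$, which is exactly closedness of $\eta$. Similarly, since the codifferential on 1-forms is (up to sign) $\delta\eta=\pm\ast\dd\ast\eta$, the condition $\dd\theta=\dd\ast\eta=0$ is equivalent to $\delta\eta=0$, namely co-closedness of $\eta$. This takes care of the equivalence between $\dd\theta=\dd\ast\theta=0$ and $\eta$ being closed and co-closed.

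For the second claim, I would use Cartan's magic formula applied to the volume form of $M^{5}$:
\[
\mc{L}_{Z}\vol_{M}=\dd(Z\lrcorner \vol_{M})+Z\lrcorner \dd\vol_{M}.
\]
Since $\vol_{M}$ is a top-degree form on $M^{5}$, $\dd\vol_{M}=0$, and by the computation already carried out in the text, $Z\lrcorner \vol_{M}=\ast\eta=\theta$. Hence $\mc{L}_{Z}\vol_{M}=\dd\theta$. Comparing with the defining relation $\mc{L}_{Z}\vol_{M}=\Div(Z)\,\vol_{M}$, I obtain $\dd\theta=\Div(Z)\,\vol_{M}$. Therefore $\dd\theta=0$ if and only if $\Div(Z)=0$, which combined with the first equivalence shows that co-closedness of $\eta$ is the same as $Z$ being divergence-free.

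There is no real obstacle here; the proof is essentially a bookkeeping exercise with the Hodge star and Cartan's formula. The only mild care needed is to track the sign in $\ast^{2}$ on $\Omega^{4}(M^{5})$ (which is $+1$, so $\theta=\ast\eta$ with no sign), and to recognize that the codifferential of a 1-form being zero is the same statement as $\dd\ast\eta=0$ at the level of forms, which is all one needs.
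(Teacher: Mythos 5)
Your proof is correct and follows essentially the same route as the paper: the paper's argument is precisely the identification $Z\lrcorner \vol_{M}=\ast\eta=\theta$ together with $\mc{L}_{Z}\vol_{M}=\Div(Z)\vol_{M}$ and (implicitly) Cartan's formula, which you have simply written out explicitly. The sign bookkeeping via $\ast^{2}=(-1)^{p(5-p)}$ on $\Omega^{p}(M^{5})$ is the same as in the text preceding the lemma, so there is nothing to add.
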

  Consequently,  one may construct  several examples of Riemannian manifolds $(M^{5}, g)$ admitting a closed and co-closed 4-form $\mathsf{F}:=\ast\eta=:\theta$  (and hence satisfy the first two conditions of the system of the eleven-dimensional bosonic supergravity equations). For instance, it is very easy to see that such a class is given in terms of  harmonic  maps $f : M^{5}\to\bb{R}$ and the 1-form $\theta={\rm d}f$.

Another class of Riemannian manifolds satisfying the condition ${\rm d}\eta=\dd\ast\eta=0$ for some 1-form $\eta$ arises in almost contact geometry. For such a description, we should  first recall a few  basic facts  (see  \cite{Blair} for an introduction to almost contact metric structures).
Let $(M^{2n+1}, g, \phi, \eta, \xi)$ be an almost contact metric manifold and let us denote by $\Phi(X, Y):=g(X, \phi Y)$ the fundamental 2-form, with  $\xi\lrcorner\Phi=0$. The Nijenhuis tensor associated to the endomorphism $\phi : TM\to TM$  is given by
 \[
 N_{\phi}(X, Y):=[\phi(X), \phi(Y)]+\phi^{2}([X, Y])-\phi([\phi(X), Y])-\phi([X, \phi(Y)])+\dd\eta(X, Y)\xi\,.
 \]
 $(M^{2n+1}, g, \xi, \eta, \phi)$ is   called {\it normal} if $N_{\phi}=0$ identically, and  {\it almost cosymplectic}, or {\it almost co-K\"ahler}  if  $\dd\Phi=0$ and $\dd\eta=0$. An almost cosymplectic manifold which is  normal is called {\it cosymplectic} or {\it co-K\"ahler}.  This is equivalent  to say  that   both $\eta$ and $\phi$ are $\nabla^{g}$-parallel, $\nabla^{g}\eta=\nabla^{g}\phi=0$. In this case the Reeb vector field $\xi$ is  Killing (notice that any Killing vector field is divergence free).  Finally,  $(M^{2n+1}, g, \xi, \eta, \phi)$ is said to be {\it nearly cosymplectic}, if $(\nabla^{g}_{X}\phi)(X)=0$ for any $X\in\Gamma(TM)$.
  \begin{prop}\textnormal{(\cite{GY, Blair})}
 1) On an almost cosymplectic manifold $(M^{2n+1}, g, \xi, \eta, \phi)$ the 1-form $\eta$ is closed and co-closed and the same property satisfies the fundamental 2-form $\Phi$.\\ 
 2) On a nearly cosymplectic manifold $(M^{2n+1}, g, \xi, \eta, \phi)$, the Reeb vector field is Killing. If in addition $N_{\phi}=0$, then $\eta$ is closed, ${\rm d}\eta=0$. In particular, a normal nearly cosymplectic manifold is cosymplectic.
\end{prop}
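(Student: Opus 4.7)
The plan for part (1) is to exploit the volume-form identities on any almost contact metric manifold. Specifically, the compatibility of $(\phi, \xi, \eta, g)$ yields, up to a normalizing constant and sign, $\vol_M = \tfrac{1}{n!}\,\eta\wedge\Phi^n$, and hence
\[
*\eta \;=\; \tfrac{1}{n!}\,\Phi^n, \qquad *\Phi \;=\; \tfrac{1}{(n-1)!}\,\eta\wedge\Phi^{n-1}.
\]
Under the almost cosymplectic hypotheses $\dd\eta=0$ and $\dd\Phi=0$, the Leibniz rule applied to these expressions gives $\dd*\eta = \tfrac{1}{(n-1)!}\Phi^{n-1}\wedge\dd\Phi = 0$ and $\dd*\Phi = \tfrac{1}{(n-1)!}\bigl(\dd\eta\wedge\Phi^{n-1} - \eta\wedge(n-1)\Phi^{n-2}\wedge\dd\Phi\bigr)=0$. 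Thus both $\eta$ and $\Phi$ are simultaneously closed and co-closed, i.e.\ harmonic.

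For part (2), the plan is first to polarize the defining nearly cosymplectic identity $(\nabla^g_X\phi)X=0$ into the skew form $(\nabla^g_X\phi)Y+(\nabla^g_Y\phi)X=0$. Combined with the standard compatibility relations $\phi^2=-\Id+\eta\otimes\xi$, $\phi\xi=0$, $\eta\circ\phi=0$, and $g(\phi X,\phi Y)=g(X,Y)-\eta(X)\eta(Y)$, one extracts information about $\nabla^g\xi$. Concretely, differentiating $\phi\xi=0$ gives $(\nabla^g_X\phi)\xi = -\phi(\nabla^g_X\xi)$; evaluating the polarized identity at $Y=\xi$ and using this yields a relation equivalent to $g(\nabla^g_X\xi,Y)+g(X,\nabla^g_Y\xi)=0$. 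This is precisely Killing's equation for $\xi$, so the Reeb vector field is Killing, and (since any Killing field is divergence-free) $Z=\xi^\flat$ is co-closed via Lemma \ref{cmax}. For the additional claim under $N_\phi=0$, I would expand the Nijenhuis tensor as defined in the text; its $\xi$-component isolates $\dd\eta$ multiplied by $\xi$, while the remaining pieces are controlled by the (already polarized) nearly cosymplectic identity. The combined system of relations forces the antisymmetric part of $\nabla^g\eta$ to vanish, whence $\dd\eta = 0$. Finally, for the ``in particular'' assertion, one uses the well-known cyclic-sum identity $\dd\Phi(X,Y,Z)=(\nabla^g_X\Phi)(Y,Z)+\text{cyclic}$, which under the polarized nearly cosymplectic condition reduces to $3(\nabla^g_X\Phi)(Y,Z)$; combined with the skew-symmetry of $(\nabla^g\phi)$, one concludes $\dd\Phi=0$, so the manifold is almost cosymplectic and normal, hence cosymplectic.

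The main obstacle lies in the step of part (2) extracting closedness of $\eta$ from normality. The Nijenhuis tensor bundles together four a priori independent terms, and to isolate $\dd\eta$ one must carefully project onto the $\xi$-direction and onto the contact distribution $\ker\eta$, using at each stage the polarized identity $(\nabla^g_X\phi)Y=-(\nabla^g_Y\phi)X$ to cancel unwanted terms. Sign-tracking through the almost contact compatibility relations, and keeping the decomposition $TM=\bb{R}\xi\oplus\ker\eta$ visible throughout, is the delicate point; the Killing property already established serves as the essential input that makes the algebra close.
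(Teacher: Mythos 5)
The paper does not actually prove this proposition --- it is quoted from Goldberg--Yano and Blair --- so your argument stands alone rather than against a proof in the text. Part (1) is correct and is precisely the classical argument: $\star\eta\propto\Phi^{n}$ and $\star\Phi\propto\eta\wedge\Phi^{n-1}$, so $\mathrm{d}\eta=\mathrm{d}\Phi=0$ forces both forms to be co-closed. In part (2) the Killing step follows the standard route but is compressed to the point of hiding the real content: putting $Y=\xi$ in the polarized identity only gives $\phi\nabla^{g}_{X}\xi=(\nabla^{g}_{\xi}\phi)X$, which yields skew-symmetry of $(X,Y)\mapsto g(\nabla^{g}_{X}\xi,\phi Y)$, \emph{not} of $g(\nabla^{g}_{X}\xi,Y)$; to pass from one to the other you still need the total skew-symmetry of $g((\nabla^{g}_{X}\phi)Y,Z)$ together with $(\nabla^{g}_{\xi}\phi)\phi=-\phi(\nabla^{g}_{\xi}\phi)$, obtained by differentiating $\phi^{2}=-\Id+\eta\otimes\xi$ along $\xi$ and using $\nabla^{g}_{\xi}\xi=0$. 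That omission is repairable.

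The genuine gap is your final step. You deduce $\mathrm{d}\Phi=0$ from $\mathrm{d}\Phi(X,Y,Z)=3(\nabla^{g}_{X}\Phi)(Y,Z)$ ``combined with the skew-symmetry of $\nabla^{g}\phi$''. As written, that deduction uses only the nearly cosymplectic hypothesis, so it would prove that \emph{every} nearly cosymplectic manifold has closed fundamental form --- which is false: the totally geodesic hypersurface $\Ss^{5}\subset\Ss^{6}$ inherits from the nearly K\"ahler six-sphere a nearly cosymplectic structure with $\nabla^{g}\phi\neq0$, and since there $\nabla^{g}\Phi$ is totally skew and proportional to $\mathrm{d}\Phi$, one has $\mathrm{d}\Phi\neq0$. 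The identity $\mathrm{d}\Phi=3\nabla^{g}\Phi$ equates two generally nonvanishing tensors; it is not a vanishing statement, and normality must be invoked a second time. A clean way to finish: once $\xi$ is Killing and $\mathrm{d}\eta=0$, the identity $2g(\nabla^{g}_{X}\xi,Y)=(\mathcal{L}_{\xi}g)(X,Y)+\mathrm{d}\eta(X,Y)$ gives $\nabla^{g}\xi=0$; writing the Nijenhuis tensor covariantly and inserting the polarized identity and $(\nabla^{g}_{X}\phi)\phi=-\phi(\nabla^{g}_{X}\phi)$ (valid because $\nabla^{g}\xi=0$) collapses it to $N_{\phi}(X,Y)=-4\,\phi\bigl((\nabla^{g}_{X}\phi)Y\bigr)$, so $N_{\phi}=0$ forces $(\nabla^{g}_{X}\phi)Y\in\ker\phi=\bb{R}\xi$; since $g((\nabla^{g}_{X}\phi)Y,\xi)=-g(\phi Y,\nabla^{g}_{X}\xi)=0$, this yields $\nabla^{g}\phi=0$, hence $\mathrm{d}\Phi=0$ and the structure is cosymplectic. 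Without some such second use of $N_{\phi}=0$, the ``in particular'' assertion does not follow from your argument.
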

The consequences of  this proposition in our supergravity framework read as follows.
\begin{prop}
Let $(\mathsf{X}^{1, 10}=M^{5}\times\wi{M}^{1, 5}, h=g\times\wi{g})$ be a Lorentzian manifold given by the product of a six-dimensional  Lorentzian manifold $(\wi{M}^{5, 1}, \wi{g})$ and a five-dimensional   Riemannian manifold $(M^{5}, g, \xi, \eta, \phi)$, which is (almost) cosymplectic or  normal nearly cosymplectic. Then, the 4-form  $\mathsf{F}:=\ast\eta$ is closed and satisfies the Maxwell equation. 
\end{prop}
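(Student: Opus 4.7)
The plan is to reduce the claim to the criterion already established in Proposition \ref{spck}, case (5). In that case (with $f=1$ and $\mathsf{F}=\theta$ for some $\theta\in\Omega^4(M)$), the closedness condition $\mathrm{d}\mathsf{F}=0$ together with the Maxwell equation is equivalent, on the product $\mathsf{X}^{1,10}=M^5\times\wi M^{1,5}$, to the pair of conditions $\mathrm{d}\theta=0$ and $\mathrm{d}\ast\theta=0$ (note that $\mathsf{F}\wedge\mathsf{F}=\theta\wedge\theta$ is an $8$-form on the $5$-dimensional factor $M$ and therefore vanishes automatically). By Lemma \ref{cmax}, this pair of conditions is in turn equivalent to the $1$-form $\eta=\ast\theta$ being both closed and co-closed on $M^5$. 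Hence the entire statement reduces to verifying that $\eta$ is closed and co-closed under each of the two stated hypotheses on the almost contact structure.

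For the first case, where $(M^5,g,\xi,\eta,\phi)$ is (almost) cosymplectic, closedness of $\eta$ holds by the very definition of the almost cosymplectic condition, and co-closedness is exactly what part (1) of the quoted proposition from \cite{GY, Blair} asserts; alternatively, one can derive $\mathrm{d}\ast\eta=0$ from the identity $\ast\eta\propto \Phi^{n}$ (with $n=2$ here) combined with $\mathrm{d}\Phi=0$. For the second case, where $(M^5,g,\xi,\eta,\phi)$ is normal nearly cosymplectic, part (2) of the same proposition states that such a manifold is cosymplectic, so in particular it is almost cosymplectic, and the conclusion of the first case applies directly.

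Thus in both geometric situations we obtain $\mathrm{d}\eta=\mathrm{d}\ast\eta=0$, hence $\mathrm{d}\theta=\mathrm{d}\ast\theta=0$ via Lemma \ref{cmax}, and Proposition \ref{spck} case (5) then yields that $\mathsf{F}:=\ast\eta=\theta$ is closed and satisfies the Maxwell equation on $\mathsf{X}^{1,10}$. There is no serious analytic obstacle here: the entire argument is a composition of previously established equivalences, and the only point requiring care is the correct identification of $\theta$ with $\ast\eta$ and the verification that $\theta\wedge\theta$ vanishes on $M^5$ for dimensional reasons, so that the Maxwell equation collapses to $\mathrm{d}\ast\theta=0$.
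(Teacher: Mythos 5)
Your proposal is correct and follows essentially the same route the paper intends: the paper states this proposition as an immediate consequence of Proposition \ref{spck}, case (5) (with $f=1$), Lemma \ref{cmax}, and the quoted result from \cite{GY, Blair} on (almost) cosymplectic and normal nearly cosymplectic manifolds, which is exactly the chain of reductions you carry out. Your added remarks --- that $\theta\wedge\theta$ vanishes for dimensional reasons so the Maxwell equation collapses to co-closedness, and the alternative derivation via $\ast\eta\propto\Phi^{2}$ --- are correct but not needed beyond what the cited results already provide.
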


Now,  in terms of the 1-form $\eta=\ast\theta$ the supergravity Einstein equation and  in particular the conditions given in Theorem \ref{ineee}, can be rephrased as follows:  \\
\noindent $(i)$ The Lorentzian manifold $(\wi{M}^{5, 1}, \wi{g})$ is  Einstein, $\Ric^{\wi{g}}=\frac{1}{6}\|\eta\|^{2}_{g}\cdot\wi{g}.$\\
\noindent $(ii)$  The Ricci tensor $\Ric^{g}$ of $(M^{5}, g, \eta)$ satisfies the equation
\[ 
\Ric^{g}(X, Y)=-\frac{1}{3}g(X, Y)\|\eta\|_{g}^{2}+\frac{1}{2}\eta(X)\cdot\eta(Y)\,,
 \]
 for any $X, Y\in\fr{X}(M)$. To see this one may use  the relation $\|\theta\|^{2}_{g}=\|\eta\|^{2}_{g}$,  Theorem \ref{ineee} and \cite[Lem.~2.6]{ACT}. When $\eta$  is of unit length, then these conditions become even simpler.

 \begin{example}\label{nonexist}
 \textnormal{Consider a 4-dimensional almost Hermitian manifold $(B^{4}, \hat{g}, J)$.
  Then,  $M^{5}=B^{4}\times\bb{R}$ endowed with the product metric $g$,   admits an almost contact metric structure  which is given by $
 \xi=(0, \frac{\partial}{\partial t})$, $\eta=\dd t$ and $\phi(X, f \frac{\partial}{\partial t})=JX$,  
 for some smooth function $f : B^{4}\times\bb{R}\to\bb{R}$ and vector field $X\in\Gamma(TB)$, where we write $(X, f \frac{\partial}{\partial t})$ for a vector field on $M^{5}$, see \cite[p.~35]{Blair}. When the K\"ahler form $\hat{\omega}(X, Y)=\hat{g}(JX, Y)$ on $B$ is closed,  then the manifold $(M^{5}, g, \phi, \xi, \eta)$ is clearly an almost cosymplectic manifold. In a similar manner one can construct an almost cosymplectic structure  on the product $B^{4}\times\Ss^{1}$. If $J$ is integrable, then this structure is cosymplectic. Consider now an orthonormal basis $\{e_1,  \dots, e_5\}$ of  $T_{p}M$ at a point $p\in M^{5}$ such that $e_5:=\xi$.  Since $\xi$ is a vector field of unit length, so is $\eta$,  i.e. $\|\eta\|^{2}_{g}=-1$. Thus, by applying $(ii)$ we obtain $\Ric^{g}(e_{5}, e_{5})=\frac{1}{6}$. However,    the Ricci tensor of $(M^{5}=B^{4}\times\bb{R}, g, \xi, \eta, \phi)$ in the direction of the Reeb vector field $\xi$ must vanish (see for example \cite{Blair, Cappel}), i.e. $\Ric^{g}(e_{5}, e_{5})=0$, which shows that such a manifold cannot provide us with the desired supergravity solutions.}
 \end{example}
  
This construction generalizes in any odd dimension and it turns out that the simplest examples of almost cosymplectic manifolds $M^{2n+1}$ are products of almost K\"ahler manifolds with the real line or $\Ss^{1}$.  If $(M^{2n+1},  g, \xi, \eta, \phi)$  is   cosymplectic, then  locally it is always a product of a K\"ahler manifold with the real line or the circle.  But not any almost cosymplectic manifold  is of this type, even locally  (for details we refer to the survey  \cite{Cappel}).   For instance, due to the work of Olszak \cite{Olsz} are known examples of proper almost cosymplectic manifolds (Lie groups) whose Reeb vector field $\xi$ is not Killing and hence they are {\it not}  the product of an almost K\"ahler manifold with $\bb{R}$ (even locally).  However, running the supergravity Einstein equation   on such manifolds we again obtain incompatibility. In fact, this shows that in this case  the main obstruction to bosonic supergravity backgrounds is the supergravity Einstein equation.  Thus, it remains an open problem the construction of examples related to   Theorem \ref{ineee}.  

\medskip
 \noindent {\bf Acknowledgements:}  
 I.C. acknowledges full support via Czech Science Foundation (GA\v{C}R no.~19-14466Y).  The authors thank G. Franchetti and A. Santi for  helpful discussions related to the topic.


\end{document}